\documentclass[notitlepage,11pt,reqno]{amsart}

\usepackage{amsopn,esint,nicefrac}
\usepackage{mathabx}
\usepackage[final]{hyperref}
\usepackage[T1]{fontenc}
\usepackage{graphicx}

\usepackage[utf8]{inputenc}
\usepackage{apptools}
\AtAppendix{\counterwithin{lemma}{section}} 
\usepackage[margin=1in]{geometry}
\allowdisplaybreaks
\newcommand{\stkout}[1]{\ifmmode\text{\sout{\ensuremath{#1}}}\else\sout{#1}\fi}

\newcommand{\Ls}{(-\Delta_p)^{s}}
\newcommand{\I}{\mathcal{I}}

\newcommand{\R}{\mathbb{R}}

\newcommand{\Rn}{\mathbb{R}^n}
\usepackage{graphicx,enumitem,dsfont,upgreek}
 \usepackage[dvips]{epsfig}
 \usepackage[mathscr]{eucal}
\usepackage{amscd}
\usepackage{amssymb,nicefrac}
\usepackage{amsthm}
\usepackage{amsmath}
\usepackage{latexsym}
\usepackage{dsfont}
\usepackage{upref}
\usepackage{hyperref}

\usepackage{color}
\theoremstyle{plain}

\newtheorem{thm}{Theorem}[section]
\theoremstyle{plain}
\newtheorem{lem}[thm]{Lemma}
\newtheorem{prop}[thm]{Proposition}
\newtheorem{cor}[thm]{Corollary}

\newtheorem{defi}[thm]{Definition}
\newtheorem{rem}{Remark}[section]
\theoremstyle{definition}
\newtheorem*{maintheorem*}{Main Theorem}
\newtheorem*{maincorollary*}{Main Corollary}
{%
\setcounter{enumi}{0}

\begin{enumerate}}%
{\end{enumerate} }

{%
\setcounter{enumi}{0}

\begin{enumerate}}%
{\end{enumerate} }

\newcommand{\norm}[1]{\ensuremath{\left\|#1\right\|}}

\newcommand{\cone}{\ensuremath{\mathcal{C}}}
\newcommand{\cD}{\ensuremath{\mathcal{D}}}
\newcommand{\sL}{\ensuremath{\mathscr{L}}}

\newcommand{\cI}{\ensuremath{\mathcal{I}}}

\newcommand{\dist}{{\rm dist}}
\newcommand{\xbar}{\ensuremath{\bar{x}}}
\newcommand{\ybar}{\ensuremath{\bar{y}}}
\newcommand{\abar}{\ensuremath{\bar{a}}}
\newcommand{\tail}{\texttt{Tail}}
\newcommand{\grad}{\nabla}

\newcommand{\dz}{\ensuremath{\, {\rm d}z}}

\newcommand{\dr}{\ensuremath{\, {\rm d}r}}

\numberwithin{equation}{section} \allowdisplaybreaks

\usepackage{cancel,pdfsync}

\title[Regularity of fractional $p$-Laplacian with coercive gradients]{Lipschitz regularity for fractional 
$p$-Laplacian with coercive gradients}

\begin{document}

\author{Anup Biswas}

\author{Aniket Sen}

\address{Indian Institute of Science Education and Research-Pune, Dr.\ Homi Bhabha Road, Pashan, Pune 411008, INDIA. Emails:
{\tt anup@iiserpune.ac.in, aniket.sen@students.iiserpune.ac.in}}

\author{Erwin Topp}

\address{
Instituto de Matem\'aticas, Universidade Federal do Rio de Janeiro, Rio de Janeiro - RJ, 21941-909, BRAZIL; 
Email: {\tt etopp@im.ufrj.br}}

\begin{abstract}
In this article, we study  nonlinear nonlocal equations with coercive gradient nonlinearity of the form
$$
(-\Delta_p)^s u(x) + H(x, \grad u) = f,
$$
where $f$ is Lipschitz continuous. We show that any viscosity solution $u$ is locally Lipschitz continuous, provided 
\[
p \in \left(1, \frac{2}{1-s}\right) \cup (1, m+1).
\]
We also establish H\"older continuity of subsolutions. Furthermore, in the case $f=0$ and $H$ is independent of $x$, we prove that the equation admits only the trivial solution in the class of bounded solutions, for all $m, p \in (1,\infty)$.

\end{abstract}

\keywords{Lipschitz regularity, fractional $p$-Laplacian, H\"{o}lder regularity, Liouville theorem, gradient nonlinearity}
\subjclass[2020]{Primary: 35B65, 35J70, 35R09}

\maketitle


\section{Introduction}
In this article, we investigate the regularity properties of (sub)solutions to  
\begin{equation}\label{main}
\sL u := (-\Delta_p)^s u + H(x, \nabla u) = f \quad \text{in } B_2,
\end{equation}
where
$$
(-\Delta_p)^s u = {\rm PV} \int_{\mathbb{R}^n} |u(x) - u(x+z)|^{p-2} (u(x) - u(x+z)) \frac{dz}{|z|^{n+sp}},
$$
with $p>1$, $s \in (0,1)$, $m>1$, $n \geq 2$, and $f \in C(\overline{B}_2)$. Here, $B_r$ denotes the ball of radius $r$ centered at the origin.
The coercive Hamiltonian $H \in C(\mathbb{R}^n \times \mathbb{R}^n)$ is assumed to satisfy the following conditions:
\begin{itemize}
\item[\hypertarget{H1}{(H1)}] There exists $m > 1$ such that, for each $R>0$ there exists $C_{H,R}>0$ so that
\begin{equation*}
|H(x, \xi_1+\xi_2)-H(y, \xi_1)|\leq C_{H,R}\left[ |x-y| (1+|\xi_1|^m) + |\xi_2| (|\xi_1|^{m-1}+|\xi_2|^{m-1}+1)\right]
\end{equation*}
for all $ x, y\in B_R$ and $\xi_1, \xi_2\in\Rn$ with $|\xi_2| \leq R$.

\item[\hypertarget{H2}{(H2)}] There exist positive constants
$\theta, \theta_1$ such that
$$\theta|\xi|^m-\theta_1 \leq H(x, \xi) 
\quad \text{for all}\; x, \xi\in\Rn.$$
\end{itemize}
A standard example of Hamiltonian satisfying the above conditions would be
$$H(x, \xi)=\langle \xi, a(x) \xi\rangle^{\frac{m}{2}} + b(x)\cdot \xi\quad x, \xi\in\Rn, m>1,$$
where $a:\Rn\to \R^{n\times n}, b:\Rn\to\Rn$ are Lipschitz continuous and $a$ is uniformly positive definite.

Throughout this work, (sub)solutions are understood in the viscosity sense. 
The precise definition of a viscosity solution will be given in the next section (see Definition~\ref{Def2.1}).

Given the structure of \eqref{main}, the notion of viscosity solution is more appropriate in the present setting. 
Indeed, defining a weak solution would require $u \in W^{s,p}_{\mathrm{loc}}(B_2)$; however, $W^{s,p}$ regularity does not provide sufficient control over the gradient nonlinearity. 
As a result, the notion of weak solution becomes technically delicate unless one imposes \emph{a priori} higher regularity on $u$. 
Although the viscosity framework requires continuity of the solution to begin with, it is well suited to the structure of the problem considered here.

To state our main results, we recall the by now classical notion of \texttt{Tail}.
By $L^{p-1}_{sp}(\Rn)$ we denote the weighted $L^p$ space or \texttt{tail space}, defined by
$$L^{p-1}_{sp}(\Rn)=\left\{f\in L^{p-1}_{\rm loc}(\Rn)\; :\; \int_{\Rn}\frac{|f(z)|^{p-1}}{(1+|z|)^{n+sp}}\dz <\infty\right\}.$$
Associated to this tail space we also define the {\it tail function} given by
$$
\tail_{s,p}(f; x, r) := \left(r^{sp} \int_{|z-x|\geq r}\frac{|f(z)|^{p-1}}{|z-x|^{n+sp}}\dz \right)^{\frac{1}{p-1}}\quad r>0.
$$
Let
\begin{equation}\label{A}
A=\sup_{B_2}|u| + \tail_{s,p}(u; 0, 2).
\end{equation}

Our first main result of this article concerns with the regularity of the subsolutions.  For this result we do not require $m>1$.

\begin{thm}\label{Tmain-1}
Let $p \in (1,\infty)$ and $m > sp$. 
Let $u \in L^\infty(\overline{B}_2) \cap L^{p-1}_{sp}(\mathbb{R}^N)$ be a viscosity solution of
\begin{equation}\label{Emain-1}
(-\Delta_p)^{s} u + H(x, \grad u) \le D 
\quad \text{in } B_2,
\end{equation}
where $ D > 0$. Assume that \hyperlink{H2}{(H2)} holds.
Then $u$ is $\gamma$-H\"older continuous in $B_1$, and 
$\|u\|_{C^{0,\gamma}(\bar{B}_1)}$ is bounded by a constant depending on 
$D$, $A$, $\theta$,$\theta_1$, $s$, $p$, $n$, $m$, and $\gamma$, where
\[
\gamma =
\begin{cases}
\dfrac{m-sp}{m-(p-1)} & \text{if } sp > p-1, \\[6pt]
\text{any value in } (0,1) & \text{if either } sp = p-1 \text{ or } sp < m \le p-1, \\[6pt]
1 & \text{if } sp < p-1 < m.
\end{cases}
\]
\end{thm}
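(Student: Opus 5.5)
Since (H2) gives $H(x,\nabla u)\ge \theta|\nabla u|^m-\theta_1$, $u$ is a viscosity subsolution of
\[
(-\Delta_p)^s u+\theta|\nabla u|^m\le D+\theta_1=:D_1\quad\text{in } B_2 .
\]
The plan is a comparison/barrier argument in the spirit of Capuzzo Dolcetta--Leoni--Porretta and Barles--Koike--Ley--Topp. In these works the coercive gradient term dominates the diffusion at small scales, so subsolutions inherit a H\"older modulus dictated by scaling. We fix $x_0\in B_1$ and aim to show
\[
u(x)-u(x_0)\le L|x-x_0|^\gamma\quad\text{for } x\in B_{1/2}(x_0),
\]
with $L$ depending only on the listed data. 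By symmetry of the roles of $x$ and $x_0$ this gives the H\"older seminorm. For $\gamma<1$ we use the barrier $\varphi(x)=u(x_0)+L|x-x_0|^\gamma$, modified outside a ball so that it dominates $u$ there using $A$; for $\gamma=1$ we use a concave profile $L\,g(|x-x_0|)$ with $g(r)=r-\kappa r^{1+\alpha}$.

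Suppose, for contradiction, that $\sup(u-\varphi)>0$. The definition of $A$ and the choice of $L$ large guarantee that the positive maximum is attained at some $\bar x\in B_{1/2}(x_0)\setminus\{x_0\}$. The case $\bar x=x_0$ is excluded because $\varphi(x_0)=u(x_0)$ and the maximum is positive. Since $\varphi$ is smooth near $\bar x$, it is an admissible test function there. The viscosity inequality (Definition~\ref{Def2.1}), with $\varphi$ used on a small ball $B_\delta(\bar x)$ and $u$ outside, then yields
\[
\theta L^m\gamma^m r^{m(\gamma-1)}\le D_1-(-\Delta_p)^s[\ldots](\bar x),\qquad r=|\bar x-x_0| .
\]
Comparison in the nonlocal part lets us replace $u$ by $\varphi+(u-\varphi)(\bar x)$, at the cost of a nonnegative contribution with the right sign. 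It therefore remains to bound $-(-\Delta_p)^s\varphi(\bar x)$ from above, splitting the integral into $|z|<r/2$, $r/2\le|z|<1$ and $|z|\ge1$.

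For the near part we use a second-order Taylor estimate for the cusp. On $|z|<r/2$ we have $|\varphi(\bar x+z)-\varphi(\bar x)|\lesssim L r^{\gamma-1}|z|$, with second derivatives of size $L r^{\gamma-2}$. The symmetrized integrand is then bounded by $(Lr^{\gamma-1})^{p-2}Lr^{\gamma-2}|z|^{p}$ when $p\ge2$; for $p<2$ we use the standard estimate near nondegenerate gradients. The outcome is a bound of order $L^{p-1}r^{\gamma(p-1)-sp}$. The middle region gives the same order, since there $|\varphi(\bar x+z)-\varphi(\bar x)|\lesssim L|z|^\gamma$ and $\int_{r/2}^1 |z|^{\gamma(p-1)-sp-1}\,d|z|$ is controlled by $r^{\gamma(p-1)-sp}$ when $\gamma(p-1)<sp$, by $|\log r|$ at equality, and by a constant otherwise. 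The far part is controlled by $C(A^{p-1}+L^{p-1})$. Combining,
\[
\theta\gamma^m L^m r^{m(\gamma-1)}\le D_1+C L^{p-1}\bigl(r^{\gamma(p-1)-sp}+|\log r|+1\bigr)+CA^{p-1}.
\]

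The contradiction as $L\to\infty$, uniformly in $r\in(0,1/2)$, requires two things. First, $m>p-1$, so that $L^m$ beats $L^{p-1}$. Second, the exponent condition $m(\gamma-1)\le \gamma(p-1)-sp$, which rearranges to $\gamma\le (m-sp)/(m-(p-1))$. This explains the three regimes. If $sp>p-1$, that ratio is less than $1$, and equality is the optimal choice. If $sp<p-1<m$, the ratio exceeds $1$, so $\gamma=1$ is allowed; here the middle integral converges without singularity, and the concave modification handles the Lipschitz barrier. If $sp=p-1$, the logarithm forces $\gamma<1$. The case $m\le p-1$ does not fit this scheme, since $L^m$ no longer dominates. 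There I would instead iterate: first obtain some small H\"older exponent, then use it to improve the oscillation bound, rescaling so that the diffusion coefficient becomes small, which lets any $\gamma<1$ be reached. I expect this last case, together with the sharp treatment of the near-field integral for $1<p<2$, to be the main obstacle. The first thing I would try there is the rescaling $u_\rho(y)=\rho^{-\gamma}u(x_0+\rho y)$, under which the coercive term gains the factor $\rho^{(\gamma-1)m+ \ldots}$ relative to the diffusion.
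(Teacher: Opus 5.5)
Your argument for $sp\ge p-1$ and for $sp<p-1<m$ is essentially the paper's. In both, a barrier $L|x-x_0|^\gamma$ (or a cone) touches from above at some $\bar x\neq x_0$, monotonicity of $J_p$ passes from the test function to the barrier, and $L^m r^{m(\gamma-1)}$ is compared against $L^{p-1}r^{\gamma(p-1)-sp}$. You estimate the cusp by hand, where the paper quotes the Del Pezzo--Quaas formula. The concave correction for $\gamma=1$ is superfluous: the cone is smooth at $\bar x$, and $\int_{B_1}|z|^{p-1-n-sp}\,dz<\infty$ when $p-1>sp$. Since $u$ is only in the tail space outside $B_2$, you should also localize, as the paper does with $w=\chi u$.

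The genuine gap is the regime $sp<m\le p-1$. It is part of the statement (any $\gamma\in(0,1)$), and you leave it to an unexecuted iteration/rescaling. Your diagnosis that ``$L^m$ no longer dominates'' is an artifact of truncating the barrier at a \emph{fixed} radius $\rho_0$. The barrier then keeps rising to height $L\rho_0^\gamma\gg A$. When $\gamma(p-1)>sp$, the region $4r\le|z|\le\rho_0/2$ genuinely contributes about $-cL^{p-1}$ to $(-\Delta_p)^s\varphi(\bar x)$. Meanwhile, even using $Lr^\gamma<2A$ (which you do not), the gradient term is only $\gtrsim L^{m/\gamma}$. So this barrier cannot give $\gamma\ge m/(p-1)$, which is $<1$ when $m<p-1$.

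The missing idea (Lemma~\ref{L2.2}(ii) in the paper) is to fix the \emph{amplitude} and shrink the \emph{support}. Take $\tilde v(x)=C\rho^{-\gamma}\min\{|x|,\rho\}^\gamma$ with $C>2\sup|w|$ fixed and $\gamma\in(\frac{sp}{p-1},1)$. Because $\gamma(p-1)>sp$, no cancellation is needed. On $B_{2\rho}$ the H\"older bound gives
\[
\int_{B_{2\rho}}(C\rho^{-\gamma})^{p-1}|z|^{\gamma(p-1)-n-sp}\,dz=cC^{p-1}\rho^{-sp}.
\]
Outside $B_{2\rho}$ the oscillation of $\tilde v$ is at most $C$, which again gives $cC^{p-1}\rho^{-sp}$. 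Hence $(-\Delta_p)^s\tilde v\ge -2cC^{p-1}\rho^{-sp}$. Also $|\nabla\tilde v|\ge\gamma C\rho^{-1}$ on $B_\rho\setminus\{0\}$, so
\[
(-\Delta_p)^s\tilde v+\theta|\nabla\tilde v|^m\ge C^m\rho^{-m}\bigl[\theta\gamma^m-2cC^{p-1-m}\rho^{m-sp}\bigr]>D_1
\]
for $\rho$ small. This uses only $m>sp$ and nothing about $m$ versus $p-1$. In your notation, it amounts to truncating at the natural scale $\rho_L=(C/L)^{1/\gamma}$: the diffusion then costs about $L^{sp/\gamma}$ while the gradient gains about $L^{m/\gamma}$. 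The H\"older constant is $C\rho^{-\gamma}$, and your touching argument then goes through unchanged, giving every $\gamma<1$ in this regime.
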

\begin{rem}
For $n\neq sp$ and $m>sp>p-1$, the above regularity is optimal. In fact, letting $\varphi(x)=|x|^{\frac{m-sp}{m-p+1}}$, it can be easily checked that
$(-\Delta_p)^s \varphi= \kappa |x|^{(p-1)\frac{m-sp}{m-p+1}-sp}$ for $x\neq 0$ and some $\kappa<0$, see 
\cite[Theorem~1.1]{DPQ25}. Thus, for some suitable
$\theta>0$, $\varphi$ is a subsolution to
$(-\Delta_p)^s\varphi+\theta|\grad\varphi|^m=0$
in $\Rn$.
\end{rem}
The above result is in the spirit of the work of Capuzzo-Dolcetta, Leoni, and Porretta~\cite{CDLP} (see also, \cite{CV22}), where the authors establish regularity results for subsolutions of superquadratic second-order elliptic equations. 
Remarkably, it was shown in~\cite{CDLP} that the 
H\"{o}lder seminorm of $u$ does not depend on the $L^\infty$ norm of $u$ or on its oscillation.
In the case of the fractional Laplacian, that is, when $p=2$, a similar result was obtained by Barles \emph{et al.}~\cite{BKLT}. 
However, in that setting, the H\"{o}lder seminorm depends on the $L^\infty$ bound of $u$ (or equivalently, on the oscillation of $u$). 
This dependence is essentially unavoidable due to the presence of the nonlocal integral term.

In our next result we investigate regularity of solutions.
\begin{thm}\label{Tmain-2}
Assume that $p \in (1,\infty)$ and \hyperlink{H1}{(H1)}-\hyperlink{H2}{(H2)} hold. 
Let $u \in C(\overline{B}_2) \cap L^{p-1}_{sp}(\mathbb{R}^N)$ be a viscosity solution of
\begin{equation}\label{ETmain-2}
(-\Delta_p)^{s} u + H(x, \grad u) = f 
\quad \text{in } B_2,
\end{equation}
where $f \in C^{0,1}(\overline{B}_2)$ and $\theta>0$. Assume further that
$$
\frac{sp+1}{p-1} > 1.
$$
Then $u$ is Lipschitz continuous in $\bar{B}_1$, and
\[
\|u\|_{C^{0,1}(\bar{B}_1)} \le C,
\]
where the constant $C$ depends only on $A$, $\theta$, $s$, $p$, $N$, $H$, and $\|f\|_{C^{0,1}(\bar{B}_2)}$.
\end{thm}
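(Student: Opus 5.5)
We will prove Theorem~\ref{Tmain-2} by the Ishii--Lions doubling-of-variables method, using Theorem~\ref{Tmain-1} as a first step. A solution of \eqref{ETmain-2} is in particular a subsolution of \eqref{Emain-1} with $D=\|f\|_\infty+\theta_1$ after absorbing constants. If $m>sp$, Theorem~\ref{Tmain-1} therefore gives a H\"older modulus of $u$ on $B_{3/2}$ with some exponent $\gamma_0\in(0,1)$, depending only on $A$. If $m\le sp$, we fall back on the H\"older estimates available for the fractional $p$-Laplacian with bounded right-hand side. In the regime where $H$ is a perturbation, the gradient term is handled in the doubling argument anyway.

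Fix $x_0\in B_1$ and consider
\[
\Phi(x,y)=u(x)-u(y)-L\varphi(|x-y|)-\tfrac{\Gamma}{2}|x-x_0|^2,
\qquad \varphi(r)=r-\kappa r^{1+\beta},
\]
for $r\le r_0$, with $\beta\in(0,1)$ small. We argue by contradiction: suppose $\sup\Phi>0$ for large $L$, attained at $(\bar x,\bar y)$ with $\delta=|\bar x-\bar y|>0$. The previous H\"older bound gives $L\delta\lesssim \delta^{\gamma_0}$, so $\delta\to0$ as $L\to\infty$. Set $a=\bar x-\bar y$. The test gradients are $\xi_x=L\varphi'(\delta)\hat a+\Gamma(\bar x-x_0)$ and $\xi_y=L\varphi'(\delta)\hat a$. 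By \hyperlink{H1}{(H1)}, with $\xi_2=\Gamma(\bar x-x_0)$ bounded,
\[
|H(\bar x,\xi_x)-H(\bar y,\xi_y)|\le C\bigl(\delta(1+L^m)+L^{m-1}\bigr).
\]
Subtracting the viscosity inequalities gives
\[
(-\Delta_p)^s u(\bar x)-(-\Delta_p)^s u(\bar y)\le C\bigl(\delta L^m+L^{m-1}+\delta\bigr)+\|f\|_{C^{0,1}},
\]
and the nonlocal difference must be shown to be $\le -c\,(\text{something large})$.

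The nonlocal estimate is the standard one. Split the integrals into a cone $\mathcal{C}=\{z:|z|<\eta\delta,\ |\langle z,\hat a\rangle|\ge(1-\eta_0)|z|\}$, a region $\{|z|\ge\delta_0\}$, and the rest. On the rest, and far away, maximality of $\Phi$ gives one-sided bounds: the far part is controlled by $A$ through the tail, and the intermediate part by the H\"older modulus. On the cone, the strict concavity of $\varphi$, namely $\varphi''(\delta)\approx-\kappa\beta\delta^{\beta-1}$, together with $|\nabla\varphi|\approx L$, yields the good term
\[
-c\,L^{p-1}\,\delta^{\beta-1}\,\delta^{2-sp}\,\delta^{p-2}\ \approx\ -c\,L^{p-1}\delta^{p-1-sp+\beta-\ldots}.
\]
The standard computation, as in the Lipschitz theory for $(-\Delta_p)^s$, gives a negative quantity of size $L^{p-1}\delta^{1-sp}$ up to the concavity parameter. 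The condition $\frac{sp+1}{p-1}>1$, that is $sp>p-2$, is exactly what makes this cone term dominate the competing errors.

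The main obstacle is balancing the good term against the Hamiltonian error $L^{m-1}+\delta L^m$. Unlike the superquadratic subsolution case, coercivity alone does not help here, since the Hamiltonians at $\bar x$ and $\bar y$ cancel only up to (H1). I would try first to exploit the equation at $\bar x$ directly: by (H2) and a bound from above on $(-\Delta_p)^s u(\bar x)$ obtained from the test function (of order $L^{p-1}\delta^{1-sp}$ from the intermediate region), we get $\theta|\xi_x|^m\lesssim L^{p-1}\delta^{1-sp}+\|f\|_\infty$. This relates $L$ and $\delta$ and allows the error $\delta L^m$ to be absorbed. Alternatively one can iterate: first upgrade the H\"older exponent step by step by running the same scheme with $\varphi(r)=r^{\gamma}$, using the cone term $L^{p-1}\delta^{(\gamma-1)(p-1)+\ldots-sp}$, and then close at $\gamma=1$. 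Choosing $\Gamma$ large to localize and $L$ large depending on $A$, $\theta$, $H$ and $\|f\|_{C^{0,1}}$ gives the contradiction. Hence $|u(x)-u(y)|\le L|x-y|$ near $x_0$, and a covering argument gives the bound on $\bar B_1$.
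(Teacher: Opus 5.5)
There is a genuine gap. Your outline has the right architecture: doubling of variables, a good cone term from the concavity of the profile, and an a priori H\"older bound forcing $|\bar a|\to 0$. But the step that actually proves the theorem, namely that the cone term dominates the Hamiltonian and intermediate-region errors, is left as ``I would try\ldots/alternatively\ldots''. As set up, it does not close.

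\textbf{First gap: the a priori regularity.} For $m\le sp$ you cannot fall back on H\"older estimates for the fractional $p$-Laplacian with bounded right-hand side. The term $f-H(x,\nabla u)$ is unbounded above, and (H2) only gives the one-sided inequality $(-\Delta_p)^s u\le \|f\|_\infty+\theta_1$, which yields no modulus. The paper obtains H\"older continuity in this regime by a separate Ishii--Lions argument with profile $t^\gamma$, starting from mere continuity (Proposition~\ref{L3.6}). There $m\le sp$ (hence $m<p$) keeps the Hamiltonian difference of lower order for $\gamma<\frac{sp-m+1}{p-m}$, and the exponent is then raised iteratively.

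Moreover, the Lipschitz step needs $u\in C^{0,\upkappa}$ with $\upkappa$ arbitrarily close to $1$, not just some $\gamma_0$. The intermediate-region bound $I_3\ge -C|\bar a|^{p-1-sp+\epsilon_\circ}$ (Lemma~\ref{L3.5}(ii)) is exactly where $sp>p-2$ is used, and it requires $\upkappa\approx 1$. When $m>sp>p-1$, Theorem~\ref{Tmain-1} only provides the exponent $\frac{m-sp}{m-(p-1)}$. So the bootstrap (Case~4 of the paper's proof) is not an optional alternative, and you do not show where the gain per step comes from.

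\textbf{Second gap: the Hamiltonian error.} Write your $\delta$ as $|\bar a|$. Treating $\Gamma(\bar x-x_0)$ as merely bounded leaves an error of size $L^{m-1}$. The good term must bound the nonlocal difference from below, not above, and it is of size $L^{p-1}|\bar a|^{p-1-sp}$ times a concavity loss, not $L^{p-1}|\bar a|^{1-sp}$. You only control $|\bar a|$ from above in terms of $L$, so nothing prevents $|\bar a|$ from being far smaller than any power of $L^{-1}$. For $sp\le p-1$ the ratio of $L^{m-1}$ to the good term is then unbounded, and no contradiction follows.

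The missing idea is to extract powers of $|\bar a|$ from the H\"older bound:
\begin{itemize}
\item $|\bar a|L^m=L^{m-1}(L|\bar a|)\lesssim L^{m-1}|\bar a|^{\upkappa}$;
\item since $\Phi(\bar x,\bar y)>0$, the localization term at $\bar x$ is $\lesssim|\bar a|^{\upkappa}$, so its gradient there is a positive power of $|\bar a|$.
\end{itemize}
The paper's $m_1[(|x|^2-\varrho_1^2)_+]^{m_2}$ makes this power $\upkappa\frac{m_2-1}{m_2}$, giving $|\mathcal H|\lesssim L^{m-1}|\bar a|^{\upkappa\frac{m_2-1}{m_2}}$. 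The proof then closes case by case, using $\upkappa\frac{m_2-1}{m_2}>p-1-sp$ when $m\le p-1$, and $L|\bar a|^{1-\upkappa}\le C$ when $m$ is larger.

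Your other suggestion, using (H2) at $\bar x$, does not close as written. It can be made to work when $sp<p-1$, but only once $u\in C^{0,\upkappa}$ with $\upkappa>\frac{sp}{p-1}$. Then the intermediate region contributes $O(1)$, and the test-function ball $B_{\varepsilon_1|\bar a|}(\bar x)$ contributes $O\bigl((L|\bar a|)^{p-1}|\bar a|^{-sp}\bigr)=O(1)$. So the nonlocal term at $\bar x$ is bounded below uniformly in $L$, and (H2) gives $\theta L^m\lesssim 1$. For $sp>p-1$ no subsolution-only argument can work, since subsolutions need not be Lipschitz there (see the remark after Theorem~\ref{Tmain-1}). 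In that regime the supersolution side and the balancing above are indispensable.
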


As a consequence of Theorems~\ref{Tmain-1} and ~\ref{Tmain-2}, we obtain
\begin{cor}
Assume that $p \in (1,\infty)$ and $f \in C^{0,1}(\overline{B}_2)$. 
Let $u \in C(\overline{B}_2) \cap L^{p-1}_{sp}(\mathbb{R}^N)$ be a viscosity solution of
\begin{equation*}
(-\Delta_p)^{s} u + H(x, \grad u) = f 
\quad \text{in } B_2.
\end{equation*}
Then $u$ is Lipschitz continuous in $B_1$, provided $p\in (1, \frac{2}{1-s})\cup (1, m+1)$.
\end{cor}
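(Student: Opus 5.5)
The corollary follows by splitting into two cases according to which interval contains $p$.

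\emph{Case $p<\frac{2}{1-s}$.} The inequality $p(1-s)<2$ rearranges to $p-1<sp+1$, which is exactly the hypothesis $\frac{sp+1}{p-1}>1$ of Theorem~\ref{Tmain-2}. Since $f\in C^{0,1}(\overline B_2)$, \hyperlink{H1}{(H1)}--\hyperlink{H2}{(H2)} hold, and $u\in C(\overline B_2)\cap L^{p-1}_{sp}$, Theorem~\ref{Tmain-2} applies directly. It gives the Lipschitz bound on $\bar B_1$, with constant depending on $A$, which is finite because $u$ is bounded on $\overline B_2$ and lies in the tail space.

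\emph{Case $p<m+1$ with $p\ge\frac{2}{1-s}$.} Here $p-1\ge sp+1>sp$, so $sp<p-1<m$. This is precisely the third alternative of Theorem~\ref{Tmain-1}, and it also gives $m>sp$.

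To use Theorem~\ref{Tmain-1}, note that $u$ is a viscosity solution of \eqref{ETmain-2}. In particular it is a viscosity subsolution of \eqref{Emain-1} with $D:=\max\{\|f\|_{L^\infty(B_2)},1\}>0$: whenever a test function touches $u$ from above, the subsolution inequality $\le f\le D$ holds. Theorem~\ref{Tmain-1} uses only \hyperlink{H2}{(H2)}, which is assumed, and $u\in L^\infty(\overline B_2)\cap L^{p-1}_{sp}$. So Theorem~\ref{Tmain-1} yields $\gamma=1$, that is, $u$ is Lipschitz in $\bar B_1$, with constant depending on $D,A,\theta,\theta_1,s,p,n,m$.

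The only point needing care is the passage from ``$\le f$'' to ``$\le D$'', which is immediate in the viscosity definition. Nothing else requires work: the argument is pure parameter bookkeeping, and the real content lies in the two theorems.
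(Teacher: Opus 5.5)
Your argument is correct and is exactly how the paper obtains the corollary. The range $p<\frac{2}{1-s}$ is equivalent to the hypothesis $\frac{sp+1}{p-1}>1$ of Theorem~\ref{Tmain-2}, while for $\frac{2}{1-s}\le p<m+1$ one has $sp\le p-2<p-1<m$. That is the Lipschitz case $\gamma=1$ of Theorem~\ref{Tmain-1}, applied to $u$ as a subsolution with $D\ge\|f\|_{L^\infty(B_2)}$; the paper states the corollary only as a consequence of the two theorems and already uses Theorem~\ref{Tmain-1} this way for $sp<p-1<m$ in Case~1 of the proof of Theorem~\ref{Tmain-2}.
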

Our regularity results are summarized in the table below
(combining Theorems~\ref{Tmain-1} and ~\ref{Tmain-2} and Proposition~\ref{L3.6}).

\begingroup
\renewcommand{\arraystretch}{1.5} 
\begin{tabular}{|l|c|c|}
\hline 
\textbf{Solution type} & \textbf{Conditions on parameters} & \textbf{Regularity} \\ [2pt]
\hline
Subsolution & $m>sp>p-1$ & $\frac{m-sp}{m-(p-1)}$-H\"older \\[2pt] 
\hline 
Subsolution & $m>sp$, and either $sp=p-1$ or $sp<m \leq p-1$ & $\gamma$-H\"older, for any $\gamma \in (0,1)$ \\ [2pt]
\hline 
Subsolution & $sp<p-1<m$ & Lipschitz \\ [2pt]
\hline 
Solution & $p\in \left(1, \frac{2}{1-s} \right) \cup \left(1, m+1\right) $ & Lipschitz \\ [2pt]
\hline 
Solution & $1<m \leq sp \leq p-2$ & \begin{minipage}{3cm}$\gamma$-H\"older, for any $\gamma\in (0, \frac{sp-m+1}{p-m-1})$
\end{minipage}\\ [2pt]
\hline 
\end{tabular} 
\endgroup

\medskip

For the Laplace equation (that is, $s=1$ and $p=2$) with a coercive gradient term, the first gradient upper bound was obtained by Lions~\cite{Lions}. The approach was based on the Bernstein technique, originally introduced by Bernstein in~\cite{Bern06,Bern10}. For the $p$-Laplacian, up to the boundary gradient estimate was established by Bidaut-V\'eron, Garc\'{i}a-Huidobro, and V\'eron~\cite{BHV14}, where the Bernstein method was combined with a Keller--Osserman type construction of radial supersolutions. In the viscosity framework, gradient bounds were also derived in~\cite{Bar91,CDLP}. Such estimates play a crucial role in the analysis of ergodic control problems; see~\cite{ABC19,BM16,BF92} and the references therein.

For nonlocal operators, however, an analogue of the Bernstein estimate remains an open problem. In the case $p=2$, Lipschitz regularity for bounded and uniformly continuous viscosity solutions was established in~\cite{BLT17} (see also, \cite{BT16} for the subdiffusive case), where the coercivity of the Hamiltonian was a key ingredient. Later, for $2s>1$, the authors of~\cite{BT24} obtained a Lipschitz estimate for general viscosity solutions to~\eqref{main} by combining an Ishii--Lions type argument (introduced by Ishii and Lions in \cite{IL90}) with the H\"older regularity result of~\cite{BKLT}. Although Lipschitz regularity is now available in this setting, the sharpness of the estimate remains unclear.  

The regularity theory for the fractional $p$-Laplace equation remains an active and evolving area of research. 
Some of the early contributions to the regularity theory of the fractional $p$-Laplacian can be found in 
\cite{Coz17,DKP14,DKP16,KKP16}. 
The first sharp H\"older regularity results were obtained in \cite{BLS18,BL17} for $p \geq 2$, and later in \cite{GL24} for $p \in (1,2)$. 

Since then, there has been a surge of work investigating the regularity properties of solutions to 
$(-\Delta_p)^s u = f$; see, for instance, 
\cite{BDLM25,BDLMS24a,BDLMS24b,DKLN,DN25,IMS20}. 
Lipschitz regularity of solutions was first established in \cite{BT25}, and subsequently extended in \cite{BS25} to fractional $p$-Poisson equation with H\"older continuous source terms. 
Very recently, a major breakthrough was achieved in \cite{GJS25}, where the authors proved $C^{1,\alpha}$ regularity for fractional $p$-harmonic functions in the range $p \in [2, \tfrac{2}{1-s})$. For the Lipschitz regularity of 
the solutions to parabolic problem we mention \cite{JSU}.
It is also interesting to note that the 
condition $\frac{sp+1}{p-1}>1$ always holds for $p\in (1, 2]$ and equivalent to $sp>p-2$ for $p>2$. This condition also appears in \cite{BS25,BDLMS24a,BDLMS24b}.

Our Theorem~\ref{Tmain-2} extends the results of \cite{BKLT,BT24,BHV14} to nonlinear, degenerate fractional operators. 
Our approach is based on the nonlocal Ishii--Lions method, originally introduced in \cite{BCI11,BCCI12} for fractional Laplacian-type operators and later adapted in \cite{BT25} to the nonlinear setting, see also \cite{CGT22}.

Next, we establish a Liouville-type result for our operator.

\begin{thm}\label{Tmain-3}
Suppose that $p \in (1,\infty)$ and $m \in (1,\infty)$. 
Let $H \in C(\mathbb{R}^N)$ be a Hamiltonian satisfying the following conditions:
\begin{itemize}
\item[(i)] There exists a constant $C>0$ such that
\begin{equation*}
|H(\xi_1+\xi_2)-H(\xi_1)|
\le C\,(|\xi_1|^{m-1}+|\xi_2|^{m-1}+1)|\xi_2|
\quad \text{for all } \xi_1,\xi_2 \in \mathbb{R}^N.
\end{equation*}

\item[(ii)] There exist positive constants $\theta$ and $\theta_1$ such that
\[
\theta |\xi|^m - \theta_1 \le H(\xi)
\quad \text{for all } \xi \in \mathbb{R}^N.
\]
\end{itemize}
Then any bounded viscosity solution of
\begin{equation}\label{Emain-3A}
(-\Delta_p)^s u + H(\nabla u) = 0 
\quad \text{in } \mathbb{R}^n
\end{equation}
must be a constant.
\end{thm}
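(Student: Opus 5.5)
The plan is to prove Theorem~\ref{Tmain-3} by a scaling argument, so that the Liouville property follows from interior estimates that do not deteriorate under blow-down. Let $u$ be a bounded viscosity solution of \eqref{Emain-3A}, with $M=\sup_{\Rn}|u|$.

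\textbf{Rescaling.} For $R\ge 1$ set $u_R(x)=u(Rx)$. Then $\nabla u_R(x)=R\nabla u(Rx)$ and $(-\Delta_p)^s u_R(x)=R^{sp}\big((-\Delta_p)^s u\big)(Rx)$. Hence $u_R$ solves
\[
(-\Delta_p)^s u_R + R^{sp}H\big(R^{-1}\nabla u_R\big)=0 \quad\text{in } \Rn .
\]
The key quantities behave as follows:
\begin{itemize}
\item the new Hamiltonian $H_R(\xi)=R^{sp}H(\xi/R)$ has coercivity $H_R(\xi)\ge \theta R^{sp-m}|\xi|^m-\theta_1R^{sp}$;
\item the sup norm and the tail of $u_R$ are bounded by $CM$, uniformly in $R$, since $u$ is globally bounded.
\end{itemize}

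\textbf{A better normalization.} The coercivity constants degenerate, so I would instead use $v_R(x)=u(Rx)/\lambda$ and choose $\lambda$ to balance the terms. The equation becomes
\[
(-\Delta_p)^s v_R+\lambda^{1-p}R^{sp}H\big(\lambda R^{-1}\nabla v_R\big)=0 .
\]
\begin{itemize}
\item The gradient term has coefficient $\theta\lambda^{m+1-p}R^{sp-m}$ in front of $|\nabla v_R|^m$.
\item The constant term is $\theta_1\lambda^{1-p}R^{sp}$.
\end{itemize}
Since $H(0)$ is fixed, the main step is first to show that $H(0)=0$ is forced. Testing at a maximum or minimum point of $u$ (approximately, via a perturbation with a small quadratic, since $u$ is bounded) gives $(-\Delta_p)^s u\ge 0$ at the max, hence $H(\approx 0)\le 0$. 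Similarly, at the min we get $H(\approx 0)\ge 0$. So $H(0)=0$ if the extrema are attained in a limiting sense. By (i), $|H(\xi)|\le C(|\xi|^m+|\xi|)$. Then $H_R(\xi)=R^{sp}H(\xi/R)$ satisfies $|H_R(\xi)|\le C(R^{sp-m}|\xi|^m+R^{sp-1}|\xi|)$.

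\textbf{Strategy via the Lipschitz/H\"older estimates.} I would apply the oscillation-type estimate from the proof of Theorem~\ref{Tmain-1}, or the Ishii--Lions argument of Theorem~\ref{Tmain-2}, in the form where the H\"older constant depends on the coercivity through the given parameters. The cleaner route, though, is to avoid the degenerating structure by choosing not to rescale $H$. Instead, apply the Ishii--Lions doubling directly to $u$ on a large ball $B_R$ with test function
\[
\Phi(x,y)=u(x)-u(y)-L\varphi(|x-y|)-\psi_R(x),
\]
where $\psi_R$ is a localization growing outside $B_R$ at scale $R$. If $\Phi>0$ somewhere, at the maximum point the viscosity inequalities give
\[
(-\Delta_p)^s\text{-difference} + H(\xi_x)-H(\xi_y)\le 0, \qquad \xi_x-\xi_y=\nabla\psi_R=O(L/R) .
\]
By (i), $|H(\xi_x)-H(\xi_y)|\le C(|\xi|^{m-1}+1)L/R$. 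The nonlocal difference gives a strongly negative contribution $\lesssim -c\,L^{p-1}\delta^{\,\cdot}$ from the concavity of $\varphi$, as in \cite{BT25}. Letting $R\to\infty$ with $L$ fixed, the error vanishes, yielding $|u(x)-u(y)|\le L\varphi(|x-y|)$ globally for every $L>0$ for which the nonlocal term alone gives a contradiction. Because $u$ is bounded, the localization can be made with $\psi_R(x)=2M\chi(|x|/R)$, which costs only $O(M/R)$ in gradient and $O(M^{p-1}R^{-sp})$ in the nonlocal term.

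Finally, scale invariance in $L$ lets me take $L\to 0$. This is the main obstacle: showing the doubling argument closes for \emph{small} $L$, not just large. I would handle it by using $\varphi(t)=t^\beta$ with $\beta<\min\{1,sp/(p-1)\}$ on scale $|x-y|\sim R$, and exploiting that the $H$-difference is small relative to the nonlocal term as $R\to\infty$. The gradient is of size $L R^{\beta-1}$, giving a term $\to 0$, while the nonlocal term scales as $L^{p-1}R^{\beta(p-1)-sp}$. A careful comparison of exponents shows that, at fixed $L$, the ratio of these terms goes to zero as $R\to\infty$; the gradient term can also be absorbed using the coercive part (ii) when $|\xi|$ is large. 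This yields $\operatorname{osc}_{B_R}u\le LR^{\beta}\cdot o(1)$ for all $L$, hence $u$ is constant.
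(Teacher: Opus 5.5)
\textbf{There is a genuine gap: the proposal never establishes an a priori modulus of continuity for $u$, and the doubling argument does not close without one.}

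\textbf{What is missing and where it is needed.} The paper first applies Theorems~\ref{Tmain-1}, \ref{Tmain-2} and Remark~\ref{R3.1} on every ball $B_2(x_0)$. This is legitimate because the equation is translation invariant and, $u$ being bounded, $A\le C\|u\|_\infty$ uniformly in $x_0$. It follows that $u$ is globally $\upkappa_0$-H\"older, and only then is the profile exponent fixed with $\gamma<\min\{1,\upkappa_0,\frac{sp}{p-1}\}$. This regularity is used in two places for which your sketch has no substitute.
(a) The intermediate region $\varepsilon_1|\abar|\le|z|\le R/4$. The only nonlocal cost of the localization you account for is the far-field $O(M^{p-1}R^{-sp})$. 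In this region, however, $\psi_R$ enters through $\triangle\psi(\xbar/R,z)$ via $\Phi(\xbar+z,\ybar+z)\le\Phi(\xbar,\ybar)$. For $p>2$ the weight $|\triangle u(\ybar,z)+t(\cdots)|^{p-2}$ is then controlled by $C|z|^{\gamma(p-2)}$, which uses the H\"older bound.
(b) The Hamiltonian error. The ratio of $\frac{C}{R}(L|\abar|^{\gamma-1})^{m-1}$ to the cone term $cL^{p-1}|\abar|^{\gamma(p-1)-sp}$ is $\frac{C}{R}L^{m-p}|\abar|^{\gamma(m-p)-(m-sp)+1}$. When this exponent is negative you need a lower bound on $|\abar|$. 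This happens, e.g., for every admissible $\gamma$ when $sp<p-1$ and $m>sp+1$. The lower bound comes only from $L|\abar|^\gamma\le u(\xbar)-u(\ybar)\le C|\abar|^{\upkappa}$ with $\upkappa>\gamma$.
Your remark that the gradient term ``can be absorbed using the coercive part (ii) when $|\xi|$ is large'' is exactly where this is needed, and it is not an argument. Condition (ii) is a one-sided bound and gives no control of $H(\xi_x)-H(\xi_y)$. Exploiting coercivity means using the subsolution inequality on its own, which is the barrier argument behind Theorem~\ref{Tmain-1} (and, for $m\le sp$, Proposition~\ref{L3.6}).

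\textbf{The ``main obstacle'' (small $L$) is mishandled.} With $L$ fixed and your profile $t^\beta$, $L|\abar|^\beta\le 2M$ forces $|\abar|\le(2M/L)^{1/\beta}$ independently of $R$. So the regime $|x-y|\sim R$ that you analyse never occurs. Even on that scaling your comparison fails: the $H$-difference is of size $1/R$, and dividing by $L^{p-1}R^{\beta(p-1)-sp}$ gives $L^{1-p}R^{sp-1-\beta(p-1)}$, which diverges whenever $sp>1+\beta(p-1)$. The conclusion $\mathrm{osc}_{B_R}u\le LR^\beta\cdot o(1)$ does not follow either.

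\textbf{How the argument closes once the H\"older bound is available.} Write the profile as $t^\gamma$ with $\gamma<\upkappa$. Then small $L$ is no obstacle at all:
\begin{itemize}
\item $|\abar|$ is confined to $[(L/C)^{1/(\upkappa-\gamma)},(2M/L)^{1/\gamma}]$, uniformly in $R$.
\item Every error term carries a negative power of $R$ relative to the cone term. This covers the $\psi_R$-perturbation of the cone estimates, $I_3$, $I_4$ and $\mathcal H$.
\item Hence for each fixed $L>0$ you get $|u(x)-u(y)|\le L|x-y|^\gamma$, and you may let $L\to0$.
\end{itemize}
The paper instead lets $L=\eta(R)$ decay like a small negative power of $R$ and balances exponents. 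Either way, the a priori H\"older regularity is the missing idea.

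\textbf{Minor points.} Your opening rescaling and the claim $H(0)=0$ are never used. The max/min argument for $H(0)=0$ is also not justified, since the extrema of a bounded $u$ need not be attained. Finally, $\nabla\psi_R=O(M/R)$, not $O(L/R)$.
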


In the case of the $p$-Laplacian, such Liouville results are typically derived from gradient estimates obtained via the Bernstein method, which also relies on the homogeneity of $H$; see~\cite{BHV14,BHV19}. Another classical approach for nonlinear elliptic operators is the nonlinear capacity method of Mitidieri~\cite{MP99,MP01,F09}, which employs test functions of the form $u^{-d}\chi^k$, where $\chi$ is a smooth cut-off function, and derives suitable integral estimates.

However, these techniques do not readily extend to nonlocal operators. 
We refer the reader to the recent survey of Cirant and Goffi~\cite{CG23}, 
which highlights several open problems concerning Liouville properties 
for nonlocal equations. One may naturally ask whether it is possible to combine the local regularity estimate from Theorem~\ref{Tmain-2} with the intrinsic scaling property of~\eqref{Emain-3A} (see, for instance,~\cite{BHV14}) in order to recover the Liouville property. While this approach appears plausible, it would require an explicit dependence of the constant $C$ in Theorem~\ref{Tmain-2} on the parameter $A$. Since the proof of Theorem~\ref{Tmain-2} relies, in certain cases, on a bootstrapping argument, keeping track of this dependence is technically involved. Moreover, the lack of homogeneity of the operator prevents us from normalizing these constants in a straightforward manner.

A first significant advance toward such Liouville-type results was achieved 
in~\cite{BQT25}, where the authors employed an Ishii--Lions type argument 
to establish a Liouville theorem for the fractional Laplacian with gradient 
nonlinearity. In contrast, the fractional $p$-Laplacian is nonlinear, 
which introduces additional difficulties in adapting the method of~\cite{BQT25} 
directly. Although the general philosophy behind the proof of 
Theorem~\ref{Tmain-3} is similar, the present setting is considerably 
more technical. We also mention the recent work 
\cite{BBS26} where the Liouville property is established for non-negative supersolutions which
requires the constraint $m<\frac{N(p-1)}{N-sp+p-1}$.

The remainder of the article is organized as follows. In Section~\ref{s-Tmain-1}, we introduce the definition of viscosity solutions and present the proof of Theorem~\ref{Tmain-1}. Section~\ref{s-Tmain-2} is devoted to the proof of Theorem~\ref{Tmain-2}, while Theorem~\ref{Tmain-3} is established in Section~\ref{s-Tmain-3}.

Throughout the paper, we use the notations $C, C_1, C_2, .., \kappa,\kappa_1, \kappa_2,...$ to denote generic constants whose values may vary from line to line.

\section{Viscosity solution and Proof of Theorem~\ref{Tmain-1}}\label{s-Tmain-1}
The (sub)solutions in this article is understood in the viscosity sense, which we define below in the spirit of  \cite{KKL19}. 
To introduce the definition, we recall some notation from 
\cite{KKL19}.  Since, as established in \cite{KKL19}, the operator $\sL$ may not be classically defined for all $C^2$ functions, we must restrict our consideration to a suitable subclass of test functions when defining viscosity solutions.
Given an open set $D$, we denote by $C^2_\eta(D)$, a subset of $C^2(D)$, defined as
$$
C^2_\eta(D)=\left\{\phi\in C^2(D)\; :\; \sup_{x\in D}\left[\frac{\min\{d_\phi(x), 1\}^{\eta-1}}{|\nabla\phi(x)|} +
\frac{|D^2\phi(x)|}{(d_\phi(x))^{\eta-2}}\right]<\infty\right\},
$$
where
$$ 
d_\phi(x)=\dist(x, N_\phi)\quad \text{and}\quad N_\phi=\{x\in D\; :\; \nabla\phi(x)=0\}.$$

The above restricted class of test functions becomes necessary to define $\sL$ in the classical sense in the singular case, that is, for
$p\leq \frac{2}{2-s}$. 
Now we are ready to define the viscosity solution from \cite[Definition~3]{KKL19}. We denote by
$$\sL u(x) = (-\Delta_p)^s u(x) + H(x, \grad u(x))\quad \text{and}\quad \widehat\sL u(x)= (-\Delta_p)^s u + \theta |\grad u|^m.$$

\begin{defi}\label{Def2.1}
A function $u:\Rn\to \R$ is a viscosity subsolution (supersolution) to $\sL= f$ in $\Omega$ if it satisfies the following
\begin{itemize}
\item[(i)] $u$ is upper (lower) semicontinuous in $\bar\Omega$.
\item[(ii)] If $\phi\in C^2(B_r(x_0))$ for some $B_r(x_0)\subset \Omega$ satisfies $\phi(x_0)=u(x_0)$,
$\phi\geq u$ ($\phi\leq u$) in $B_r(x_0)$ and one of the following holds
\begin{itemize}
\item[(a)] $p>\frac{2}{2-s}$ or $\nabla\phi(x_0)\neq 0$,

\item[(b)] $p\leq \frac{2}{2-s}$ and $\nabla\phi(x_0)= 0$ is such that $x_0$ is an isolated critical point of $\phi$, and
$\phi\in C^2_\eta(B_r(x_0))$ for some $\eta>\frac{sp}{p-1}$,
\end{itemize}
then we have 
$$\sL \phi_r(x_0)  \leq f(x_0) \quad \left(\sL \phi_r(x_0) \geq f(x_0)\right),$$
 where
\[
\phi_r(x)=\left\{\begin{array}{ll}
\phi(x) & \text{for}\; x\in B_r(x_0),
\\[2mm]
u(x) & \text{otherwise}.
\end{array}
\right.
\]

\item[(iii)] $u_+\in L^{p-1}_{sp}(\Rn)$ ($u_-\in L^{p-1}_{sp}(\Rn)$, respectively).
\end{itemize}
A  viscosity solution of $\sL u= f$ in $\Omega$ is both sub and supersolution in $\Omega$. 
\end{defi}

Now we will prove Theorem~\ref{Tmain-1} with the help of comparison principle and Definition~\ref{Def2.1}. First, we construct appropriate 
(classical) supersolutions to $\widehat\sL u=D_1$, where $D_1$ will be chosen appropriately. For $\upkappa\in (0, 1], r>0$, we define
$v_\upkappa : \Rn \rightarrow \R$ as:
\begin{equation*}
v_\upkappa (x)=
\begin{cases}
|x|^{\upkappa} \quad &\text{if } |x| \leq r,
 \\
r^{\upkappa}  &\text{if } |x| > r.
\end{cases}
\end{equation*}
It is easy to see that $v_\upkappa$ is globally $\upkappa$-H\"older continuous. For $C>0$, let us define
$$\tilde{v}_\upkappa(x)= C r^{-\upkappa}v_\upkappa(x).$$

\begin{lem}\label{L2.2}
There exists $r\in (0,1], C_0>0$, dependent on $\upkappa, D_1, s, p, m, n$, such that
\begin{itemize}
\item[(i)] for any $C>C_0$ and $m>p-1> sp$, we have $\widehat\sL\tilde{v}_1> D_1$ for $x\in B_1\setminus\{0\}$,
\item[(ii)] for any $C, \upkappa\in (\frac{sp}{p-1}, 1)$ and $m\in (sp, p-1]$, we have $\widehat\sL\tilde{v}_\upkappa> D_1$ for $x\in B_r\setminus\{0\}$. In this case, $r$ also depends on $C$.
\end{itemize}
\end{lem}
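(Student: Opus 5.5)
Fix $x\in B_r\setminus\{0\}$ (for (i), $x\in B_1\setminus\{0\}$, with $r$ to be chosen $\le 1$). By homogeneity, $\tilde v_\upkappa = C r^{-\upkappa}v_\upkappa$ gives
\[
\widehat\sL\tilde v_\upkappa(x)= (Cr^{-\upkappa})^{p-1}(-\Delta_p)^s v_\upkappa(x) + \theta (Cr^{-\upkappa})^m \upkappa^m |x|^{m(\upkappa-1)}.
\]
We bound $(-\Delta_p)^s v_\upkappa(x)$ from below by a negative power of $|x|$ and then let the gradient term absorb it together with $D_1$.

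\emph{Step 1: lower bound for the nonlocal term.} We split $\mathbb{R}^n$ into $|z|<|x|/2$ and $|z|\ge |x|/2$. Since $v_\upkappa\le r^\upkappa$ and $v_\upkappa\ge 0$, on the far region we have $|v_\upkappa(x)-v_\upkappa(x+z)|\le \min\{|z|^\upkappa, r^\upkappa\}$ by global $\upkappa$-Hölder continuity. Hence the negative part of the integrand is bounded by
\[
\int_{|z|\ge|x|/2}\frac{\min\{|z|,r\}^{\upkappa(p-1)}}{|z|^{n+sp}}\,dz \lesssim
\begin{cases} |x|^{\upkappa(p-1)-sp} & \upkappa(p-1)<sp,\\ r^{\upkappa(p-1)-sp}\bigl(1+\log(r/|x|)\bigr) & \upkappa(p-1)=sp,\\ r^{\upkappa(p-1)-sp}& \upkappa(p-1)>sp.\end{cases}
\]
On the near region, $v_\upkappa=|\cdot|^\upkappa$ is smooth near $x$ with $|\nabla v|\sim|x|^{\upkappa-1}$ and $|D^2 v|\lesssim |x|^{\upkappa-2}$. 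The standard principal-value computation (symmetrize $z\mapsto -z$ and Taylor expand, treating the cases $p\ge2$ and $p<2$ via the usual inequality for $t\mapsto |t|^{p-2}t$) gives a contribution bounded in absolute value by $C|x|^{\upkappa(p-1)-sp}$. In both cases of the lemma we have $\upkappa(p-1)>sp$, since in (i) $\upkappa=1$ and $p-1>sp$. Therefore the far-region bound is $\lesssim r^{\upkappa(p-1)-sp}\le |x|^{\upkappa(p-1)-sp}$, and overall $(-\Delta_p)^s v_\upkappa(x)\ge -K|x|^{\upkappa(p-1)-sp}$ with $K=K(\upkappa,s,p,n)$. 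I expect the near-region estimate for $p<2$, uniformly in $|x|$, to be the main technical obstacle, since it must use nondegeneracy of the gradient on $B_{|x|/2}(x)$. The scaling $v_\upkappa(x+z)=|x|^\upkappa w(\hat x + z/|x|)$ reduces it to a single point on the unit sphere and makes this manageable.

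\emph{Step 2: absorption.} Writing $\lambda=Cr^{-\upkappa}$, we need
\[
\theta\upkappa^m\lambda^m|x|^{m(\upkappa-1)} > K\lambda^{p-1}|x|^{\upkappa(p-1)-sp}+D_1 .
\]
In case (i) ($\upkappa=1$) the left side is $\theta\lambda^m$ and the first term on the right is at most $K\lambda^{p-1}$, because $p-1-sp>0$ and $|x|\le1$. Since $m>p-1$, taking $r=1$ (so $\lambda=C$) and $C>C_0$ large makes $\theta C^m>KC^{p-1}+D_1$. In case (ii), $m\le p-1$, so largeness of $\lambda$ does not help. Instead we compare powers of $|x|$: the ratio of the first right-hand term to the left side is $\lambda^{p-1-m}|x|^{(\upkappa-1)(p-1-m)+(p-1-sp)}\cdot$const, up to rearrangement, i.e. $\lambda^{p-1-m}|x|^{\upkappa(p-1)-sp-m(\upkappa-1)}$. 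The exponent $\upkappa(p-1-m)+m-sp$ is positive because $m>sp$ and $p-1\ge m$. Thus, for $|x|<r$, the ratio is at most $C^{p-1-m}r^{-\upkappa(p-1-m)}r^{\upkappa(p-1-m)+m-sp}=C^{p-1-m}r^{m-sp}$, which is small for $r$ small depending on $C$. Finally, $D_1$ is dominated since $\lambda^m|x|^{m(\upkappa-1)}\ge C^m r^{-m}\to\infty$ as $r\to0$. This yields $r=r(C,\upkappa,D_1,s,p,m,n)$ as claimed.
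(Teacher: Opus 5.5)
Your strategy is the paper's: bound the nonlocal term crudely from below, then let the gradient term absorb it, using $m>p-1$ with $C$ large in (i) and $m>sp$ with $r$ small in (ii). However, Step 1 contains a reversed inequality.

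In the regime $\upkappa(p-1)>sp$, which you correctly identify as the relevant one, the far-region bound is $\lesssim r^{\upkappa(p-1)-sp}$. Since $|x|<r$ and the exponent is positive, $r^{\upkappa(p-1)-sp}\ge |x|^{\upkappa(p-1)-sp}$, not $\le$. The resulting estimate $(-\Delta_p)^s v_\upkappa(x)\ge -K|x|^{\upkappa(p-1)-sp}$ is in fact false. The integrand is dominated by $\min\{|z|,r\}^{\upkappa(p-1)}|z|^{-n-sp}\in L^1$ uniformly in $x$. By dominated convergence, $(-\Delta_p)^s v_\upkappa(x)\to -\int_{\mathbb{R}^n} v_\upkappa(z)^{p-1}|z|^{-n-sp}\,dz=-c\,r^{\upkappa(p-1)-sp}<0$ as $x\to 0$, whereas your right-hand side tends to $0$. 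So your Step 2 in case (ii), which compares powers of $|x|$ on both sides, rests on a false bound.

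The repair is short and gives exactly the paper's argument. Keep the correct uniform bound $(-\Delta_p)^s v_\upkappa\ge -K r^{\upkappa(p-1)-sp}$ on $B_r$; the near region is $\lesssim |x|^{\upkappa(p-1)-sp}\le r^{\upkappa(p-1)-sp}$. The nonlocal term of $\tilde v_\upkappa$ is then $\ge -KC^{p-1}r^{-sp}$. Bound the gradient term below using $|x|^{m(\upkappa-1)}\ge r^{m(\upkappa-1)}$, i.e. by $\theta\upkappa^m C^m r^{-m}$. The ratio of the two is then $\lesssim C^{p-1-m}r^{m-sp}$, the same quantity you end with, and the conclusion follows. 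In case (i) the bound you actually use, $-K\lambda^{p-1}$ with $\lambda=C$, is correct because $r=1$.

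The near-region estimate you flag as the main obstacle is not one. In both cases $\upkappa(p-1)>sp$, so $|v_\upkappa(x)-v_\upkappa(x+z)|^{p-1}|z|^{-n-sp}\le |z|^{\upkappa(p-1)-n-sp}$ is integrable at $z=0$. No principal value, symmetrization, Taylor expansion or nondegeneracy of $\nabla v_\upkappa$ is needed, for any $p>1$. The paper simply splits at $|z|=2r$ and uses the global H\"older bound together with $0\le v_\upkappa\le r^\upkappa$.
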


\begin{proof}
We start with (i). Note that $\tilde{v}_1$ is globally Lipschitz with Lipschitz constant being $C$.
Consider $x \in B_1 \setminus \{0\}$ and compute
\begin{align*}
(-\Delta_p)^s \tilde{v}_1(x)&= {\rm PV}\int_{\Rn}|\tilde{v}_1(x)-\tilde{v}_1(x+z)|^{p-2} (\tilde{v}_1(x)-\tilde{v}_1(x+z)) \frac{\dz}{|z|^{n+sp}}
\\
&= {\rm PV}\int_{B_1} |\tilde{v}_1(x)-\tilde{v}_1(x+z)|^{p-2} (\tilde{v}_1(x)-\tilde{v}_1(x+z)) \frac{\dz}{|z|^{n+sp}}
\\
&\quad + \int_{B_1^c} |\tilde{v}_1(x)-\tilde{v}_1(x+z)|^{p-2} (\tilde{v}_1(x)-\tilde{v}_1(x+z)) \frac{\dz}{|z|^{n+sp}}
\\
&\geq -C^{p-1} \int_{B_1} |z|^{p-1} \frac{\dz}{|z|^{n+sp}} - C^{p-1} \int_{B_1^c} |v_1(x+z) -|x||^{p-1} \frac{\dz}{|z|^{n+sp}}
\\
&\geq -C^{p-1} \int_0^1 r^{p-1-sp-1} \dr \int_{S^{n-1}} {\rm d}\theta - C^{p-1} \int_{B_1^c}  \frac{\dz}{|z|^{n+sp}}
\\
&\geq -\kappa C^{p-1}
\end{align*}
for some positive constant $\kappa$, independent of $x$.
Therefore,
\begin{align*}
\widehat\sL\tilde{v}_1(x) &= (-\Delta_p)^s \tilde{v}_1(x) + \theta |\nabla \tilde{v}_1(x)|^m 
\\
&\geq -\kappa C^{p-1} + \theta C^m
\\
&= C^{p-1} ( -\kappa +\theta C^{m-(p-1)})
\\
&> D_1
\end{align*}
for all $C>C_0$, provided we choose $C_0$ large enough depending on $\kappa, m, p$ and $\theta$, where we use the fact that $m>p-1$.

Now consider (ii). Let $\upkappa\in (\frac{sp}{p-1}, 1)$ and note that $\tilde{v}$ is globally $\upkappa$-H\"{o}lder with H\"{o}lder constant being $Cr^{-\upkappa}$.
Letting $x \in B_r \setminus \{0\}$ and we compute as before to obtain
\begin{align*}
(-\Delta_p)^s\tilde{v}_\upkappa(x)&={\rm PV}\int_{\Rn}
|\tilde v_\upkappa(x)-\tilde v_\upkappa(x+z)|^{p-2} 
(\tilde v_\upkappa(x)-\tilde v_\upkappa(x+z)) \frac{\dz}{|z|^{n+sp}}
\\
&\geq -(Cr^{-\upkappa})^{p-1}\int_{B_{2r}} |z|^{\upkappa(p-1)} \frac{\dz}{|z|^{n+sp}} 
- (Cr^{-\upkappa})^{p-1} \int_{B_{2r}^c} |v_\upkappa(x+z) - |x|^\upkappa |^{p-1} \frac{\dz}{|z|^{n+sp}}
\\
&= - (Cr^{-\upkappa})^{p-1} \int_0^{2r} r^{\upkappa(p-1)-sp-1} \dr \int_{S^{n-1}} {\rm d}\theta 
-  C^{p-1} \int_{B_{2r}^c}  \frac{\dz}{|z|^{n+sp}}
\\
&\geq -\kappa C^{p-1} r^{-sp} - \kappa C^{p-1} r^{-sp}
\\
&\geq -2\kappa C^{p-1} r^{-sp}
\end{align*}
 for some constant $\kappa$. Therefore,
\begin{align*}
\widehat\sL \tilde{v}_\upkappa (x) = (-\Delta_p)^s \tilde{v}_\upkappa(x) + \theta |\grad \tilde{v}_\upkappa(x)|^m
&\geq - 2\kappa C^{p-1} r^{-sp} + \theta (Cr^{-\upkappa})^m \upkappa^m |x|^{m(\upkappa-1)}
\\
&\geq -2\kappa C^{p-1} r^{-sp} + \theta (Cr^{-\upkappa})^m \upkappa^m r^{m(\upkappa-1)}
\\
&\geq  C^m r^{-m} \left[-2\kappa C^{(p-1)-m}  r^{m-sp}+\theta\upkappa^m \right]
\\
&>D_1,
\end{align*}
provided we choose $r\in (0, 1)$ small enough depending on $C, \theta,\upkappa, m$ and $D_1$. This completes the proof.
\end{proof}
Now suppose, $sp\geq p-1$. Then for $\upkappa\in (0, 1)$ and $\phi_\upkappa(x)=|x|^\upkappa$, we obtain from \cite[Theorem~1.1]{DPQ25}
that
$$ |(-\Delta_p)^s\phi_\upkappa(x)|=c(\upkappa) |x|^{\upkappa(p-1)-sp}\quad \text{for}\; x\neq 0.$$
Thus, if $m>sp\geq p-1$, for any $C>0$ we have
\begin{align*}
\widehat\sL (C\phi_r(x)) \geq -c(\upkappa) C^{p-1} |x|^{\upkappa(p-1)-sp} + \theta C^m \upkappa^m |x|^{m(\upkappa-1)}\quad \text{for}\; x\neq 0.
\end{align*}
Set $\upkappa=\frac{m-sp}{m-(p-1)}$ if $sp>p-1$, and $\upkappa\in (0, 1)$ if $sp=p-1$. Such choice of $\upkappa$ gives us
$0>\upkappa(p-1)-sp\geq m(\upkappa-1)$. Thus, given $D_1$, we find $C_0$ large enough so that for $r=\frac{1}{2}$ we get
$$\widehat\sL(C\phi_r(x))> D_1\quad \text{for}\; x\in B_r\setminus\{0\},$$
and for all $C\geq C_0$. 

Now we prove Theorem~\ref{Tmain-1}
\begin{proof}[Proof of Theorem~\ref{Tmain-1}]
First , we modify $u$ to be a globally bounded subsolution. Let $\frac{3}{2}<\varrho_1<\varrho_2<2$.
Let
$\chi:\Rn\to [0, 1]$ be a smooth cut-off function satisfying $\chi=1$ in $B_{\varrho_1}$ and $\chi=0$ on $B^c_{\varrho_2}$. Letting, 
$w=\chi u$, it is easy to see from \eqref{Emain-1} that, for $x\in B_{\frac{3}{2}}$ and $p\geq 2$,
\begin{align*}
&\sL w(x) 
\\
&\leq \sL u(x) + (p-1) 2^{p-2} \int_{\Rn} (|u(x+z)-u(x)|+|w(x+z)-u(x)|)^{p-2} |(1-\chi(x+z)) u(x+z)|\frac{\dz}{|z|^{n+sp}}
\\
&\leq D + (p-1)2^{p-2}\int_{|z|\geq \frac{1}{2}(\varrho_1-\frac{3}{2})} (|u(x+z)-u(x)|+|w(x+z)-u(x)|)^{p-2} |(1-\chi(x+z)) u(x+z)|\frac{\dz}{|z|^{n+sp}}
\\
&\leq D + \kappa \int_{\Rn} (|u(x)|^{p-1}+|u(z)|^{p-1}) \frac{\dz}{1+|z|^{n+sp}}\leq D+ \kappa_1 A^{p-1}
\end{align*}
for some constants $\kappa, \kappa_1$, and $A$ is given by \eqref{A}. A similar calculation is also possible for $p\in (1,2)$. Thus,
for some constant $D_1$, dependent on $D, A$, we have
\begin{equation}\label{ET1.1A}
\sL w\leq D_1-\theta_1 \quad \text{in}\; B_{\frac{3}{2}} \Rightarrow \widehat\sL w\leq D_1 \quad \text{in}\; B_{\frac{3}{2}},
\end{equation}
using (H2) and $w=\chi u\in L^\infty(\Rn)$. Now from Lemma~\ref{L2.2} and the discussion following the lemma, we have $r\in (0, \frac{1}{2}]$ and
a function $\varphi$ satisfying the following.
\begin{itemize}
\item[(a)] $\varphi(x)=C |x|^\gamma$ in $B_r$, where $\gamma$ is given by Theorem~\ref{Tmain-1} and the constant $C$ is chosen large enough
so that $\varphi(x)> 2\sup|w|$ in $B^c_r$.
\item[(b)] We have $\widehat\sL\varphi> D_1 $ in $B_r\setminus\{0\}$.
\end{itemize}
We claim that for any $x, y\in B_1$ we have
\begin{equation}\label{ET1.1B}
 w(x)-w(y)\leq \varphi(x-y).
\end{equation}
It is easily seen that Theorem~\ref{Tmain-1} follows from \eqref{ET1.1B}. To prove \eqref{ET1.1B} we fix $y\in B_1$ and
let 
$$M = \sup \{ w(x) -w(y) -\varphi(x-y) : x \in\Rn\}.$$
By the choice of $\varphi$ we see that $w(x) -w(y) -\varphi(x-y)<0$ for $|x-y|\geq r$. Now if $M\leq 0$, then there is nothing to prove and \eqref{ET1.1B} follows. So assume that
$M>0$. Since $w$ is upper semicontinuous, the supremum is attained at some point $x^*\in B_r(y)$. In other words,
$$w(x)\leq M+w(y) + \varphi(x-y)\quad \text{and}\quad w(x^*)=M+w(y) + \varphi(x^*-y).$$
Again, since $M>0$, $x^*\neq y$, and therefore, $\grad(M+w(y) + \varphi(x^*-y))\neq 0$. For $\delta<\frac{1}{2} |x^*-y|$, we define
\[
\varphi_\delta(x)=\left\{\begin{array}{ll}
M+w(y) + \varphi(x-y) & \text{for}\; x\in B_\delta(x^*),
\\[2mm]
u(x) & \text{otherwise}.
\end{array}
\right.
\]
Applying Definition~\ref{Def2.1} in \eqref{ET1.1A} we then have $\widehat\sL\varphi_\delta(x^*)\leq D_1$. Again, since 
$M+w(y) +\varphi(\cdot-y)\geq \varphi_\delta$
and $M+w(y) +\varphi(x^*-y)=\varphi_\delta(x^*)$, using the monotonicity of 
the integration, we get 
$$\widehat\sL\varphi(x^*-y)=\widehat\sL(M+w(y) +\varphi(x^*-y))\leq \widehat\sL\varphi_\delta(x^*)\leq D_1.$$
But this contradicts condition (b) above. Hence $M\leq 0$, proving the claim \eqref{ET1.1B}.
\end{proof}

\section{Proof of Theorem~\ref{Tmain-2}}\label{s-Tmain-2}
\subsection{General strategy}\label{S-genstr}
In this section, we outline the main strategy of the proof, which is inspired by \cite{BT25}.
In certain cases, we employ a bootstrap argument. We first establish that 
$u$ is locally $\gamma$-H\"older continuous for $\gamma\in (0,1)$ sufficiently close to 
$1$, and then use this improved regularity to deduce the local Lipschitz continuity of $u$.

Let $1 \leq \varrho_1 < \varrho_2 \leq 2$ be fixed, and consider the doubling function
\begin{equation}\label{E2.2}
\Phi(x, y)= u(x)-u(y)-L\varphi(|x-y|)- m_1 \psi(x)\quad x, y\in B_2,
\end{equation}
where 
$$
\psi(x) = [(|x|^2-\varrho^2_1)_+]^{m_2}\quad   x \in B_2,
$$ 
is a {\it localization} function. We choose $m_2\geq 3$ so that $\psi\in C^2(B_2)$. 
The function $\varphi:[0, 2] \to [0, \infty)$ serves as a {\it regularizing} function and captures the modulus of continuity of $u$. 

In the arguments below, we employ two types of regularizing functions $\varphi$ (after suitable scaling):
\begin{equation}\label{varphi}
\begin{split}
 \varphi_\gamma(t) &=t^{\gamma}\quad \mbox{with} \ \gamma\in (0, 1) \quad \mbox{(for $\gamma$-H\"older profile)}, 
 \\
 \tilde\varphi(t) &=\left\{\begin{array}{ll}
t+\frac{t}{\log t}& \text{for}\; x>0,
\\
0 & \text{for}\; x = 0.
\end{array}
\right.
 \quad  \mbox{(for Lipschitz profile)}.
\end{split}
\end{equation}
Observe that both functions are increasing and concave in a neighbourhood of $t=0$. 

We show that for a sufficiently large $m_1, m_2$,
and for all $L$ large enough, dependent on $m, n, p, s, A, \norm{f}_{C^{0,1}(B_2)}$, we have $\Phi\leq 0$ in $B_2\times B_2$, which leads to the desired result.

We proceed by contradiction, assuming that $\sup_{B_2\times B_2}\Phi>0$ for all large $L$. Let us choose $m_1$ large enough, dependent on
$\varrho_2, \varrho_1$ and $m_2$, so that $m_1 \psi(x)> 2A$ for all $|x|\geq \frac{\varrho_2+\varrho_1}{2}$. Then for all 
$|x|\geq \frac{\varrho_2+\varrho_1}{2}$, we have $\Phi(x, y)<0$ for all $y\in B_2$. Again, since $\varphi$ is strictly increasing in
$[0, 2]$, if we choose $L$ to satisfy $L\varphi(\frac{\varrho_2-\varrho_1}{4})>2A$, we obtain $\Phi(x, y)<0$ whenever
$|x-y|\geq \frac{\varrho_2-\varrho_1}{4}$. Thus, there exist $\xbar\in B_{\frac{\varrho_2+\varrho_1}{2}}$ and 
$\ybar\in B_{\frac{3\varrho_2}{4}+\frac{\varrho_1}{4}}$ such that
\begin{equation}\label{E2.3}
\sup_{B_2\times B_2}\Phi=\Phi(\xbar,\ybar)>0.
\end{equation}
Denote by $\abar=\xbar-\ybar$. From \eqref{E2.3} we have $\abar\neq 0$, and moreover, we have that
\begin{equation}\label{AB03}
L \varphi(|\bar a|) \leq u(\bar x) - u(\bar y) \leq 2A,
\end{equation}
in view of~\eqref{E2.2}. This implies that $|\bar a|$ gets smaller as $L$ enlarges.
Let us denote by
$$
\phi(x, y) := L\varphi(|x-y|)+ m_1 \psi(x).
$$
Note that
\begin{center}
$x\mapsto u(x) - \phi(x, \ybar)$ has a local maximum point at $\xbar$, and\\[2mm]
$y \mapsto u(y) +\phi(\xbar,y)$ has a local minimum point at $\ybar$.
\end{center}
For $\delta \in (0, \frac{\varrho_2-\varrho_1}{4})$ to be chosen later, we define the following test functions

\[
w_1(z)=\left\{\begin{array}{ll}
\phi(z, \ybar) + \kappa_{\xbar} & \text{if}\; z\in B_\delta(\bar x),
\\[2mm]
u(z) & \text{otherwise},
\end{array}
\right.
\quad \text{and}\quad
w_2(z)=\left\{\begin{array}{ll}
-\phi(\xbar, z)+ \kappa_{\ybar} & \text{if}\; z\in B_\delta(\bar y),
\\[2mm]
u(z) & \text{otherwise},
\end{array}
\right.
\]
with $\kappa_{\xbar}=u(\xbar)- \phi(\xbar, \ybar)$, and $\kappa_{\ybar}= u(\ybar) + \phi(\xbar, \ybar)$.

An important point here is that, regardless of the choice of $\varphi$ above, for all sufficiently large
$L$ (depending on $m_1$ and $m_2$), we must have $\nabla_x\phi(\xbar,\ybar)\neq 0$ and $\nabla_y\phi(\xbar,\ybar)\neq 0$. Thus, from 
Definition~\ref{Def2.1} we get
$$
\sL w_1(\bar x) \leq f(\xbar)\quad \text{and}\quad 
\sL w_2 (\bar y)   \geq f(\ybar).
$$
As can be seen from \cite{KKL19}, the above principal values are well-defined.
Subtracting the viscosity inequalities at $\bar x$ and $\bar y$, we obtain
\begin{equation}\label{E2.4}
(-\Delta_p)^s w_1(\bar x)- (-\Delta_p)^s w_2(\bar y) + H(\xbar, \grad w_1(\bar x))- H(\ybar, \grad w_2(\bar y))
\leq f(\bar x) - f(\bar y) \leq C |\bar a|.
\end{equation}
At this point we introduce the notation: $J_p(t)=|t|^{p-2}t$ and
\begin{equation}\label{E3.6}
\cI[D] w(x):=  {\rm PV}\int_D J_p(w(x)-w(x+z))\frac{\dz}{|z|^{n+sp}}.
\end{equation}
We also define the following domains
$$\cone=\{z\in B_{\delta_0|\abar|}\; :\; |\langle \abar, z\rangle|\geq (1-\eta_0) |\abar||z|\},
\quad \cD_1=B_\delta \cap \cone^c, \quad \text{and}\quad \cD_2=B_{\tilde\varrho}\setminus (\cD_1\cup\cone),$$
where $\delta_0=\eta_0\in (0, \frac{1}{2})$ would be chosen later, $\tilde\varrho=\frac{1}{4}(\varrho_2-\varrho_1)$, and, in general, $\delta << \delta_0 |\bar a| << \tilde \varrho$. 
 From \eqref{E2.4} and \eqref{E3.6} we arrive at
\begin{align}\label{E2.7}
&\underbrace{\cI[\cone] w_1(\bar x)-\cI[\cone] w_2(\bar y)}_{=I_1} + \underbrace{\cI[\cD_1] w_1(\bar x)-\cI[\cD_1] w_2(\bar y)}_{=I_2} + 
 \underbrace{\cI[\cD_2] w_1(\bar x)-\cI[\cD_2] w_2(\bar y)}_{=I_3}\nonumber
 \\
&\quad + \underbrace{\cI[B^c_{\tilde\varrho}] w_1(\bar x)-\cI[ B^c_{\tilde\varrho}] w_2(\bar y)}_{=I_4} 
 + \underbrace{H(\bar x, \grad w_1(\xbar)) -H(\bar y, \grad w_2(\ybar))}_{=\mathcal{H}}  \leq C |\abar|.
\end{align}
Our main goal is to estimate the terms $I_i, \mathcal{H}, i=1,2,3,4,$ suitably so that \eqref{E2.7} leads to a contradiction for all $L$ large enough.

\subsection{Some key estimates}
In this section we gather a few key estimates from \cite{BT25,BS25}. We start with the estimate of $I_1$
from \cite{BT25}, \cite[Lemma~3.1]{BS25}
\begin{lem}[Estimate of $I_1$]\label{L3.1}
Let $p\in (1, \infty)$.
For $0<|\abar|\leq \frac{1}{8}$, consider the cone 
$\cone=\{z\in B_{\delta_0|\abar|}\; :\; |\langle \abar, z\rangle| \geq (1-\eta_0) |\abar||z|\}$, where $\delta_0=\eta_0\in (0, 1)$.
Then 
\begin{itemize}
\item[(i)] For $\varphi(t)=\varphi_\gamma(t)=t^\gamma$, $\gamma\in (0, 1)$, there exist $L_0, \delta_0$, dependent on $m, m_1, \gamma$, such that
$$I_1\geq C L^{p-1}|\abar|^{\gamma(p-1)-sp}$$
for all $L\geq L_0$, where the constant $C$ depends on $\delta_0, p, s, \gamma, n$.

\item[(ii)] Let $r_0>0$ be small enough so that for $r\in (0, r_0]$ we have
\begin{gather*}
\frac{r}{2}\leq \tilde\varphi(r)\leq r, \quad \frac{1}{2}\leq \tilde\varphi'(r)\leq 1,
\\
-2(r\log^2(r))^{-1}\leq\tilde\varphi^{\prime\prime}(r)\leq -(r\log^2(r))^{-1}.
\end{gather*}
Letting $\varphi(t) = \tilde \varphi(\frac{r_\circ}{3}t)$,  there exist $L_0, \delta_1$, independent of $\abar$,
such that for $\delta_0=\delta_1(\log^2|\abar|)^{-1}$ we have
$$I_1\geq C L^{p-1} |\abar|^{p-1-sp} (\log^2(|\abar|))^{-\upzeta}$$
for all $L\geq L_0$, where $\upzeta=\frac{n+1}{2}+p-sp$ and the constant $C$ depends only on $\delta_1, p, s, n$.
\end{itemize}
\end{lem}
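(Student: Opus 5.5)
Inside the cone $\cone$, both $w_1$ and $w_2$ coincide with the test functions ($\cone\subset B_\delta$ after choosing $\delta\ge\delta_0|\abar|$, or else we replace $\delta$ by $\delta_0|\abar|$ in the cone part). The plan is to compare $w_1(\xbar)-w_1(\xbar+z)$ and $w_2(\ybar)-w_2(\ybar+z)$ via second-order Taylor expansion along $z$ and exploit the strict concavity of $\varphi$ in the direction $\hat a=\abar/|\abar|$.

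\textbf{Step 1: Taylor expansion.} Write $\phi(x,y)=L\varphi(|x-y|)+m_1\psi(x)$. For $z\in\cone$ we have
\[
w_1(\xbar)-w_1(\xbar+z)=-\langle \grad_x\phi(\xbar,\ybar),z\rangle-\tfrac12\langle D^2_x\phi(\xi,\ybar)z,z\rangle ,
\]
and similarly $w_2(\ybar)-w_2(\ybar+z)=\langle\grad_y\phi(\xbar,\ybar),z\rangle+\tfrac12\langle D^2_y\phi(\xbar,\zeta)z,z\rangle$. Setting $q=L\varphi'(|\abar|)\hat a$, the gradients are $\grad_x\phi=q+m_1\grad\psi(\xbar)$ and $-\grad_y\phi=q$.

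In the cone, $|\langle\hat a,z\rangle|\ge(1-\eta_0)|z|$ and $|z|\le\delta_0|\abar|$. Hence
\[
D^2\big(\varphi(|\cdot|)\big)\ \text{at points } a'\ \text{with}\ |a'-\abar|\le \delta_0|\abar|
\]
satisfies $\langle D^2 z,z\rangle\le \varphi''(|a'|)\langle \hat a',z\rangle^2+\frac{\varphi'(|a'|)}{|a'|}\big(|z|^2-\langle\hat a',z\rangle^2\big)$. With $\eta_0=\delta_0$ small, the first term dominates. For $\varphi_\gamma$ this gives at most $-c\,\gamma(1-\gamma)|\abar|^{\gamma-2}|z|^2$. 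For $\varphi=\tilde\varphi(\frac{r_\circ}{3}\cdot)$ the tangential error is of order $\eta_0|z|^2/|\abar|$, so we need $\eta_0\lesssim(\log^2|\abar|)^{-1}$. This is why $\delta_0=\delta_1(\log^2|\abar|)^{-1}$, and it yields at most $-c|z|^2/(|\abar|\log^2|\abar|)$.

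\textbf{Step 2: Monotonicity of $J_p$.} Both differences share the linear part $-\langle q,z\rangle$, up to the perturbation $m_1\langle\grad\psi(\xbar),z\rangle$, which is negligible since $|q|\gtrsim L$ dominates $m_1|\grad\psi|$ for $L\ge L_0$. Using odd symmetry of $\cone$ ($z\mapsto -z$), the linear parts cancel in the principal value. Then we use the elementary inequality
\[
J_p(b+\epsilon_1)-J_p(b-\epsilon_2)\ \ge\ c_p\,(\epsilon_1+\epsilon_2)\,\big(|b|+\epsilon_1+\epsilon_2\big)^{p-2}\quad (p\ge 2),
\]
with the analogous form for $p<2$ (where $J_p'\ge (p-1)(|b|+\epsilon)^{p-2}$ directly). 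Here $b=-\langle q,z\rangle$ has size $\sim L\varphi'(|\abar|)|z|$, and $\epsilon_1+\epsilon_2\sim L|\varphi''||z|^2$.

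\textbf{Step 3: Integration.} This gives
\[
I_1\gtrsim L^{p-1}|\varphi''(|\abar|)|\,\varphi'(|\abar|)^{p-2}\int_{\cone}|z|^{p}\frac{\dz}{|z|^{n+sp}}.
\]
The cone has aperture measure $\sim\eta_0^{(n-1)/2}$, so the integral is $\sim\eta_0^{(n-1)/2}(\delta_0|\abar|)^{p-sp}$. For $\varphi_\gamma$ the prefactor is $|\abar|^{\gamma-2+(\gamma-1)(p-2)}$, giving $L^{p-1}|\abar|^{\gamma(p-1)-sp}$, as claimed in (i).

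For $\tilde\varphi$, inserting $\delta_0=\eta_0=\delta_1(\log^2|\abar|)^{-1}$ gives the factor $(\log^2|\abar|)^{-1-\frac{n-1}{2}-(p-sp)}$, which is exactly $\upzeta=\frac{n+1}{2}+p-sp$, as claimed in (ii).

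\textbf{Main obstacle.} The delicate step is Step 2 for $p<2$ together with the singular principal value. Near $z=0$ the linear term dominates the quadratic one, and $J_p$ is not Lipschitz at $0$. The resolution is that $b\neq0$ away from the hyperplane $\hat a^\perp$, and that hyperplane is excluded from the cone, so $|b|\gtrsim L\varphi'|z|$ there. The remaining work is to keep the constants uniform in $\abar$ in case (ii).
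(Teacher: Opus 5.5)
Your route is the paper's. The paper imports this lemma from \cite{BT25} and \cite[Lemma~3.1]{BS25}, but carries out the same computation for $I_1$ in Section~\ref{s-Tmain-3}:
\begin{itemize}
\item remove the odd linear part $\mathfrak p$ using the symmetry of $\cone$;
\item use the strictly positive second-order increment on the cone;
\item bound $\int_0^1|\mathfrak p(z)+t\,\triangle^2\phi|^{p-2}dt$ from below, via \cite[Lemma~3.3]{KKL19} for $p\ge2$ and via an upper bound on $|\mathfrak p+t\,\triangle^2\phi|$ for $p<2$;
\item integrate $|z|^{p-n-sp}$ over a cone of angular measure $\sim\eta_0^{(n-1)/2}$.
\end{itemize}
Your analysis of the Hessian of $\varphi(|\cdot|)$ is correct, and it explains why (ii) needs $\eta_0\lesssim(\log^2|\abar|)^{-1}$. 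Your exponent count, including $\upzeta=1+\frac{n-1}{2}+(p-sp)$, is also correct. Two justifications, however, do not hold as written and must be replaced.

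First, $\cone\not\subset B_\delta$ where the lemma is used. There $\delta=\varepsilon_1|\abar|$ (or $\varepsilon_1(\log^{2\rho}|\abar|)^{-1}|\abar|$), which is much smaller than $\delta_0|\abar|$. Since $w_1,w_2$ are fixed, $\delta$ cannot be ``replaced'' on the cone. What you need is the maximality of $\Phi$ at $(\xbar,\ybar)$: it gives $w_1\le\phi(\cdot,\ybar)+\kappa_{\xbar}$ and $w_2\ge-\phi(\xbar,\cdot)+\kappa_{\ybar}$ near $\xbar$ and $\ybar$, with equality at those points. Monotonicity of $J_p$ then yields $\cI[\cone]w_1(\xbar)\ge\cI[\cone]\phi(\cdot,\ybar)(\xbar)$ and $\cI[\cone]w_2(\ybar)\le\cI[\cone](-\phi(\xbar,\cdot))(\ybar)$, which is all a lower bound on $I_1$ requires.

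Second, $m_1\langle\grad\psi(\xbar),z\rangle$ is not negligible against the quadratic gain $\sim L|\varphi''(|\abar|)||z|^2$. It is linear in $z$ and dominates as $z\to0$, so it cannot be absorbed into $\epsilon_1$, and your two-sided inequality with a common $b$ does not apply. Instead set $b_1=-\langle q+m_1\grad\psi(\xbar),z\rangle$ and $b_2=-\langle q,z\rangle$. Then $J_p(b_1)-J_p(b_2)$ is odd in $z$ and has zero principal value on $\cone$. Apply the one-sided bounds $J_p(b_1+\epsilon)-J_p(b_1)\gtrsim\epsilon(|b_1|+\epsilon)^{p-2}$ and $J_p(b_2)-J_p(b_2-\epsilon)\gtrsim\epsilon(|b_2|+\epsilon)^{p-2}$, $\epsilon\ge0$, to $w_1$ and $w_2$ separately; this is exactly the paper's separate subtraction of $\mathfrak p$. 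The domination $|q|\gg m_1|\grad\psi(\xbar)|$ is then used only to guarantee $|b_1|\gtrsim L\varphi'(|\abar|)|z|$ on $\cone$. You also omitted the bounded term $m_1D^2\psi$ in the Hessian; it is absorbed for $L\ge L_0$ because $L|\varphi''|\gtrsim L$ near $|\abar|$ when $|\abar|\le\frac18$.
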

Next we borrow an estimate of $I_2$ from \cite[Lemmas~3.2 and ~4.2]{BT25}, followed by an intermediate estimate of $I_3$ from \cite[Lemmas~3.3 and ~4.1]{BS25}.
\begin{lem}[Estimate of $I_2$]\label{L3.2}
Let $p\in (1, \infty)$ and $\delta=\varepsilon_1|\abar|$ for $\varepsilon_1\in (0, \nicefrac{1}{2})$. Then there exist $C, L_0$, independent of $\varepsilon_1, |\abar|$, such that
$$I_2 \geq - C L^{p-1} \varepsilon_1^{p(1-s)} (\varphi'(|\abar|))^{p-1} |\abar|^{p(1-s)-1},$$
where $\varphi$ is given by \eqref{varphi}. Moreover, if we set $\delta =\varepsilon_1 (\log^{2\rho}(|\abar|))^{-1}|\abar|$ with
$\rho=\frac{\frac{n+1}{2}+p-sp}{p-sp}$, and let $\varphi(t)=\tilde\varphi(\frac{r_0}{3}t)$ from Lemma~\ref{L3.1}(ii),
then we have $L_0, C>0$ satisfying
$$I_2 \geq - C L^{p-1} \varepsilon_1^{p(1-s)}  |\abar|^{p(1-s)-1} (\log^2(|\abar|))^{-\upzeta},$$
for all $L\geq L_0$.
\end{lem}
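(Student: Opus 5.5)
Since $w_1$ and $w_2$ coincide with $u$ off $B_\delta(\xbar)$ and $B_\delta(\ybar)$, and $\cD_1\subset B_\delta$, only the test pieces enter $I_2$. Hence
\[
I_2=\int_{\cD_1}\Big[J_p(\phi(\xbar,\ybar)-\phi(\xbar+z,\ybar))-J_p(\phi(\xbar,\ybar+z)-\phi(\xbar,\ybar))\Big]\frac{\dz}{|z|^{n+sp}}.
\]
The plan is to handle the two terms separately. Each is a principal-value integral of a $C^2$ function over the symmetric set $\cD_1=B_\delta\cap\cone^c$. I Taylor expand $\phi(\cdot,\ybar)$ at $\xbar$ and $\phi(\xbar,\cdot)$ at $\ybar$ to second order. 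The linear parts are $\pm\langle q,z\rangle$, with $q=L\varphi'(|\abar|)\hat a+m_1\grad\psi(\xbar)$ for the first and $q'=L\varphi'(|\abar|)\hat a$ for the second, where $\hat a=\abar/|\abar|$. The key observation is that $J_p(\langle q,z\rangle)$ is odd in $z$ and $\cD_1$ is symmetric, so these leading contributions cancel in the principal value. The remaining error is controlled by the second-order remainder, $|z|^2\sup_{B_\delta}|D^2\phi|$.

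To bound the error I use the elementary inequality for $J_p$. For $p\ge2$,
\[
|J_p(a+b)-J_p(a)|\le C(|a|+|b|)^{p-2}|b|.
\]
For $p<2$, the bound is $C|b|^{p-1}$ or $C|a|^{p-2}|b|$, whichever is better. Here $a=\langle q,z\rangle$ and $b=O(\|D^2\phi\|\,|z|^2)$. On $B_\delta(\xbar)$ with $\delta=\varepsilon_1|\abar|\le|\abar|/2$, the Hessian satisfies $|D^2\phi|\lesssim L|\varphi''(|\abar|/2)|+L\varphi'(|\abar|/2)/|\abar|+m_1$. For both profiles this is $\lesssim L\varphi'(|\abar|)|\abar|^{-1}$ once $L\ge L_0$ makes $|\abar|$ small: for $t^\gamma$, $\varphi'(t/2)\sim\varphi'(t)$; for $\tilde\varphi$, $\varphi'\in[\frac12,1]$. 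We also have $|q|\lesssim L\varphi'(|\abar|)$, since $L$ large absorbs the $m_1$ term.

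For $p\ge2$ this gives the integrand bound $(L\varphi')^{p-1}|z|^{p-2}|z|^2|\abar|^{-1}$ (using $|z|\le|\abar|$). Integrating against $|z|^{-n-sp}$ over $B_\delta$ yields $(L\varphi')^{p-1}|\abar|^{-1}\delta^{p-sp}=(L\varphi')^{p-1}\varepsilon_1^{p(1-s)}|\abar|^{p(1-s)-1}$, which is the claimed bound. For $p<2$ I instead use that outside the cone $|\langle q',z\rangle|\ge c\,\eta_0|q'||z|$. This requires the first-term gradient to be close to the direction $\hat a$ for large $L$, which holds because $m_1|\grad\psi|\ll L\varphi'$. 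With this, $|a|^{p-2}|b|\lesssim \eta_0^{p-2}(L\varphi')^{p-1}|z|^{p}|\abar|^{-1}$, and the same integral follows, with $\eta_0$ absorbed into $C$. Since $\eta_0=\delta_0$ is fixed in part one, this is harmless.

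In the logarithmic case, $\varphi=\tilde\varphi(\frac{r_0}{3}\cdot)$ and $\delta=\varepsilon_1(\log^{2\rho}|\abar|)^{-1}|\abar|$, where $\rho(p-sp)=\upzeta$. The same computation gives $L^{p-1}|\abar|^{-1}\delta^{p-sp}=L^{p-1}\varepsilon_1^{p(1-s)}|\abar|^{p(1-s)-1}(\log^2|\abar|)^{-\upzeta}$, precisely because $\rho$ was chosen so that $\rho(p-sp)=\upzeta$.

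The main obstacle is the singular range $p<2$ in the log case, where $\eta_0=\delta_1(\log^2|\abar|)^{-1}$ degenerates. There the factor $\eta_0^{p-2}$ would cost a power of $\log$. I would first try to avoid it by using the bound $C|b|^{p-1}$ instead, which gives $(L|\abar|^{-1})^{p-1}\delta^{2(p-1)-sp}$. If that exponent bookkeeping fails, I would split $\cD_1$ according to whether $|b|\le|a|/2$ and absorb the logarithmic loss into the extra logarithmic smallness of $\delta$.
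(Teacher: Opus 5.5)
Your argument for $p\ge 2$ is correct; it is the standard estimate the paper imports from \cite{BT25}. The odd linear part integrates to zero over the symmetric set $\cD_1$. Then $|J_p(a+b)-J_p(a)|\le C(|a|+|b|)^{p-2}|b|$, with $|a|\le C L\varphi'(|\abar|)|z|$ and $|b|\le K|z|^2$, $K\approx L\varphi'(|\abar|)|\abar|^{-1}$, gives $C(L\varphi'(|\abar|))^{p-1}|\abar|^{-1}\delta^{p-sp}$. The identity $\rho(p-sp)=\upzeta$ handles the logarithmic choice of $\delta$.

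The gap is the range $p\in(1,2)$, and it affects both assertions of the lemma there, not only the logarithmic one. You use that outside the cone $|\langle q',z\rangle|\ge c\,\eta_0|q'||z|$, but the inequality goes the other way. $\cone$ is the set of $z$ almost parallel to $\abar$, $|\langle \abar,z\rangle|\ge(1-\eta_0)|\abar||z|$, so on $\cD_1=B_\delta\cap\cone^c$ you only know $|\langle\abar,z\rangle|<(1-\eta_0)|\abar||z|$. In particular $\cD_1$ contains $B_\delta\cap \bar a^{\perp}$ (and $B_\delta\cap q^{\perp}$ for the term at $\xbar$), where the linear part $a$ vanishes. So $|a|^{p-2}$ has no pointwise bound on $\cD_1$.

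Your fallback $|J_p(a+b)-J_p(a)|\le C|b|^{p-1}$ does not repair this. The integrand is then of order $(L\varphi'/|\abar|)^{p-1}|z|^{2(p-1)-n-sp}$, which is not integrable at the origin when $p\le\frac{2}{2-s}$. When it is integrable, it produces $\delta^{2(p-1)-sp}$, i.e. only $\varepsilon_1^{2(p-1)-sp}$ and $(\log^2|\abar|)^{-\rho(2(p-1)-sp)}$. Both are strictly weaker than the claimed $\varepsilon_1^{p(1-s)}$ and $(\log^2|\abar|)^{-\upzeta}$.

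The missing idea is that the singularity on the hyperplane is integrable in the angular variable, because $p-2>-1$. Keep the bound you already listed, $|J_p(a+b)-J_p(a)|\le C_p|a|^{p-2}|b|$. It is valid for all $a\neq 0$ when $p\in(1,2]$, since $(p-1)\int_0^1|a+tb|^{p-2}\,dt\le C_p(|a|+|b|)^{p-2}$. But integrate it in polar coordinates $z=r\omega$ instead of bounding $|a|^{p-2}$ pointwise. Using $|q|\ge\frac12 L\varphi'(|\abar|)$ for $L\ge L_0$ and $\int_{S^{n-1}}|\langle \hat q,\omega\rangle|^{p-2}\,d\sigma(\omega)<\infty$, you get
\[
|\cI[\cD_1]w_1(\xbar)|\le C K|q|^{p-2}\int_0^\delta r^{p-1-sp}\,dr\int_{S^{n-1}}|\langle\hat q,\omega\rangle|^{p-2}\,d\sigma(\omega)\le C\,(L\varphi'(|\abar|))^{p-1}|\abar|^{-1}\delta^{p-sp},
\]
and the same at $\ybar$. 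This bound does not involve $\eta_0$ at all. So the degeneration of $\eta_0=\delta_1(\log^2|\abar|)^{-1}$, which you single out as the main obstacle, is not an obstacle. Both statements then follow for $p<2$ exactly as in your $p\ge 2$ computation.
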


\begin{lem}[Estimate of $I_3$]\label{L3.3}

Suppose that $u \in C^{0, \upkappa}(\bar B_{\varrho_2})$ and $\vartheta\in (0, 1)$.
\begin{enumerate}

\item[(i)]
Let $p> 2$ and $\upkappa<\min\{1, \frac{sp}{p-2}\}$. There exists $L_0>0$ such that
$$I_3\geq -C \left[\int_\delta^{|\abar|^\vartheta} r^{\upkappa(p-2) + 1-sp} dr  + 
|\abar|^{\frac{m_2-1}{m_2}\upkappa} \int_\delta^{|\abar|^\vartheta} r^{\upkappa(p-2) -sp} dr + |\abar|^{\upkappa +\vartheta(\upkappa(p-2)-sp)}\right]$$
for all $L\geq L_0$, where the constant $C$ depends on $\upkappa, p, s, n, m_1, m_2$ and the $C^{0, \upkappa}$ norm of $u$ in
$B_{\bar\varrho_2}$.

\item[(ii)]
Let $p\in (1,2]$. 
There exists $L_0>0$ such that
$$I_3\geq -C \left[\int_\delta^{|\abar|^\vartheta} r^{2(p-1) - 1-sp} dr  + 
|\abar|^{\frac{m_2-1}{m_2}\upkappa(p-1)} \int_\delta^{|\abar|^\vartheta} r^{p-2 -sp} dr + |\abar|^{\upkappa(p-1) -\vartheta sp}\right]$$
for all $L\geq L_0$, where the constant $C$ depends on $\upkappa, p, s, n, m_1, m_2$ and the $C^{0, \upkappa}$ norm of $u$ in
$B_{\bar\varrho_2}$.
\end{enumerate}
\end{lem}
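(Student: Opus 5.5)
We have to prove Lemma~\ref{L3.3}, the lower bound for
$$I_3=\cI[\cD_2]w_1(\xbar)-\cI[\cD_2]w_2(\ybar),$$
where $\cD_2=B_{\tilde\varrho}\setminus(\cD_1\cup\cone)$. Since $\delta\le|z|$ on $\cD_2$, both $w_1(\xbar+z)=u(\xbar+z)$ and $w_2(\ybar+z)=u(\ybar+z)$ there. So
$$I_3=\int_{\cD_2}\Big[J_p\big(u(\xbar)-u(\xbar+z)\big)-J_p\big(u(\ybar)-u(\ybar+z)\big)\Big]\frac{dz}{|z|^{n+sp}}.$$

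The first step is to use the maximality of $\Phi$ at $(\xbar,\ybar)$, testing with $(\xbar+z,\ybar+z)$. Since $\varphi(|x-y|)$ is unchanged by this common shift, it gives
$$u(\xbar+z)-u(\ybar+z)\le u(\xbar)-u(\ybar)+m_1\big(\psi(\xbar+z)-\psi(\xbar)\big).$$
Set $A_1=u(\xbar)-u(\xbar+z)$, $A_2=u(\ybar)-u(\ybar+z)$ and $\ell(z)=m_1(\psi(\xbar+z)-\psi(\xbar))$. Then $A_1\ge A_2-\ell(z)$. Monotonicity of $J_p$ yields
$$J_p(A_1)-J_p(A_2)\ge J_p(A_2-\ell)-J_p(A_2).$$
So the integrand is bounded below by the error coming only from the localization term. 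By the mean value theorem this error has size about $(|A_2|+|\ell|)^{p-2}|\ell|$ when $p\ge2$, and at most $|\ell|^{p-1}$ when $p\in(1,2]$.

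Next, split $\cD_2$ into $\{\delta\le|z|\le|\abar|^\vartheta\}$ and $\{|\abar|^\vartheta\le|z|\le\tilde\varrho\}$, and estimate $\ell$ by the structure of $\psi$:

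\begin{enumerate}
\item[(a)] By Taylor expansion, $|\ell(z)|\le |\nabla\psi(\xbar)||z|+C|z|^2$.
\item[(b)] The gradient term is controlled by $|\nabla\psi(\xbar)|\lesssim \psi(\xbar)^{(m_2-1)/m_2}$.
\item[(c)] Since $\Phi(\xbar,\ybar)>0$, we have $m_1\psi(\xbar)\le u(\xbar)-u(\ybar)\le C|\abar|^\upkappa$ by the H\"older hypothesis. Hence $|\nabla\psi(\xbar)|\lesssim|\abar|^{\upkappa(m_2-1)/m_2}$.
\item[(d)] The first-order Taylor part integrates to zero over the symmetric set $\cD_2$ (or gives the principal-value term). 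The remaining contribution is second order, $\lesssim|z|^2$.
\end{enumerate}

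For $p>2$, bound $|A_2|\le C|z|^\upkappa$ and apply these estimates on the inner shell. The $|z|^2$ part of $\ell$ gives
$$\int |z|^{\upkappa(p-2)+2}\,|z|^{-n-sp}\,dz\ \sim\ \int_\delta^{|\abar|^\vartheta} r^{\upkappa(p-2)+1-sp}\,dr .$$
The $|\nabla\psi(\xbar)||z|$ part gives the middle term
$$|\abar|^{\frac{m_2-1}{m_2}\upkappa}\int_\delta^{|\abar|^\vartheta} r^{\upkappa(p-2)-sp}\,dr .$$
The condition $\upkappa<sp/(p-2)$ guarantees the tails converge.

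On the outer shell $|z|\ge|\abar|^\vartheta$ we do not use the comparison. Instead we write
$$J_p(A_1)-J_p(A_2)\ge -C\,|A_1-A_2|\,(|A_1|+|A_2|)^{p-2}.$$
Here $|A_1-A_2|\le 2\,{\rm osc}\le C|\abar|^\upkappa$, using the H\"older continuity of $u$ at distance $|\abar|$. Integrating $|z|^{\upkappa(p-2)}$ over $|z|\ge|\abar|^\vartheta$ yields $|\abar|^{\upkappa+\vartheta(\upkappa(p-2)-sp)}$. For $p\le 2$, the $(p-1)$-H\"older continuity of $J_p$ gives the analogous three terms with exponents $2(p-1)$, $\upkappa(p-1)$ and $\upkappa(p-1)-\vartheta sp$.

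The main obstacle is the inner region in the singular-type bookkeeping. There one must combine $A_1\ge A_2-\ell$ with the non-Lipschitz behaviour of $J_p$ near $0$ for $p<2$, or its growth for $p>2$, without losing powers. One must also verify that the linear part of $\ell$ really contributes only with the $\psi^{(m_2-1)/m_2}$ factor. I would handle this by symmetrizing $z\mapsto -z$ on $\cD_2$, which is symmetric since $\cone$ and $B_\delta$ are.
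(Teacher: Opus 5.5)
Your proposal is correct and follows the route behind the cited result. The paper itself only quotes \cite[Lemmas~3.3 and 4.1]{BS25}, and runs the same mechanism explicitly in its Section~4 estimate of $I_3$. On $\delta\le|z|\le|\abar|^\vartheta$ you compare $\Phi$ at $(\xbar+z,\ybar+z)$ and use $|\nabla\psi(\xbar)|\lesssim|\abar|^{\frac{m_2-1}{m_2}\upkappa}$; on $|z|\ge|\abar|^\vartheta$ you use $|A_1-A_2|\le 2[u]_{\upkappa}|\abar|^{\upkappa}$; and you close with the mean value theorem for $p>2$ or the $(p-1)$-H\"older continuity of $J_p$ for $p\le 2$.

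Drop remark (d) and the $z\mapsto -z$ symmetrization, though. After the monotonicity step the integrand $J_p(A_2-\ell)-J_p(A_2)$ is nonlinear in $\ell$ and involves the non-symmetric quantity $A_2(z)$, so the linear part of $\ell$ does not cancel. No cancellation is needed anyway: the middle term of the lemma is exactly the uncancelled contribution of $|\nabla\psi(\xbar)|\,|z|$, which you already integrate directly.
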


We also need an estimate of $I_4$ from \cite[Lemma~3.4]{BS25}
\begin{lem}[Estimate of $I_4$]\label{L3.4}
Let $p\in (1, \infty)$. Suppose that $u\in C^{0, \upkappa}(\bar{B}_{\varrho_2})$ for some $\upkappa\in [0, 1]$. Then
there is a constant $L_0$ such that
$$|I_4|\leq C \max\{|\abar|^\upkappa, |\abar|^{\upkappa(p-1)}\}$$
for all $L\geq L_0$, where the constant $C$ depends on $\tilde\varrho, s, p, n, A$ and the $C^{0, \upkappa}$ norm of $u$ in $\bar{B}_{\varrho_2}$.
\end{lem}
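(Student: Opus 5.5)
We estimate $I_4$ directly from the definitions of $w_1,w_2$ on the exterior region $B^c_{\tilde\varrho}$. Since $\delta<\tilde\varrho$, for $|z|\ge\tilde\varrho$ we have $w_1(\xbar+z)=u(\xbar+z)$ and $w_2(\ybar+z)=u(\ybar+z)$, while $w_1(\xbar)=u(\xbar)$ and $w_2(\ybar)=u(\ybar)$. Hence
$$
I_4=\int_{B^c_{\tilde\varrho}}\Big[J_p(u(\xbar)-u(\xbar+z))-J_p(u(\ybar)-u(\ybar+z))\Big]\frac{\dz}{|z|^{n+sp}},
$$
and no principal value is needed. We split $B^c_{\tilde\varrho}$ into the near-exterior part $\{\tilde\varrho\le|z|\le R_0\}$, with $R_0$ chosen so that $\xbar+z,\ybar+z$ stay in $\bar B_{\varrho_2}$ (this is possible since $\xbar,\ybar\in B_{(\varrho_1+3\varrho_2)/4}$ and $\tilde\varrho=(\varrho_2-\varrho_1)/4$, at least after shrinking the relevant radius), and the far part. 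In the near part both points lie where $u\in C^{0,\upkappa}$. Writing $a=u(\xbar)-u(\xbar+z)$ and $b=u(\ybar)-u(\ybar+z)$, we get $|a-b|\le 2[u]_{\upkappa}|\abar|^\upkappa$ and $|a|,|b|\le 2A$.

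For the pointwise difference we use the standard inequalities for $J_p$. If $p\ge2$, then $|J_p(a)-J_p(b)|\le (p-1)(|a|+|b|)^{p-2}|a-b|\le C A^{p-2}|\abar|^\upkappa$. If $1<p<2$, then $|J_p(a)-J_p(b)|\le 2^{2-p}|a-b|^{p-1}\le C|\abar|^{\upkappa(p-1)}$. Integrating against $|z|^{-n-sp}$ over $|z|\ge\tilde\varrho$ contributes only a constant $C(\tilde\varrho,n,s,p)$. This yields the bound $C\max\{|\abar|^\upkappa,|\abar|^{\upkappa(p-1)}\}$, since $|\abar|\le1$ for $L\ge L_0$ by \eqref{AB03}.

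For the far part, $|z|\ge R_0$, the points may leave $B_2$, so Hölder continuity is no longer available. Here we instead change variables: $\int J_p(u(\xbar)-u(\xbar+z))|z|^{-n-sp}\dz=\int J_p(u(\xbar)-u(w))|w-\xbar|^{-n-sp}\,dw$, and similarly at $\ybar$. The difference then splits into two pieces:
\begin{itemize}
\item[(a)] $J_p(u(\xbar)-u(w))-J_p(u(\ybar)-u(w))$ integrated against the kernel. This is bounded as above by $|u(\xbar)-u(\ybar)|\le [u]_\upkappa|\abar|^\upkappa$ (or its $(p-1)$ power when $p<2$), with the weight $(1+|u(w)|)^{p-2}$ when $p\ge2$, which is integrable thanks to the tail $A$. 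When $p-2>p-1$ fails this requires no Hölder inequality; when $p>2$ we use $(|u(\xbar)|+|u(w)|)^{p-2}\le C(A^{p-2}+|u(w)|^{p-2})$ together with $|u|^{p-2}\le 1+|u|^{p-1}$.
\item[(b)] The kernel difference $\big||w-\xbar|^{-n-sp}-|w-\ybar|^{-n-sp}\big|\le C|\abar|\,|w-\xbar|^{-n-sp-1}$, valid since $|w-\xbar|\ge R_0\gg|\abar|$, multiplied by $|u(\ybar)-u(w)|^{p-1}\le C(A^{p-1}+|u(w)|^{p-1})$. This gives $C A^{p-1}|\abar|\le C|\abar|^\upkappa$.
\end{itemize}
The region boundaries do not match exactly after the shift, but the symmetric difference of $\{|w-\xbar|\ge R_0\}$ and $\{|w-\ybar|\ge R_0\}$ is a shell of width $|\abar|$ on which the integrand is bounded. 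So it contributes $O(|\abar|)$.

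The main obstacle is bookkeeping in the far region for $p>2$: one must ensure that the growth weight $|u(w)|^{p-2}$ is controlled by $\tail_{s,p}(u;0,2)$ together with $\sup_{B_2}|u|$. That is why the constant depends on $A$. Choosing $L_0$ so that $|\abar|\le\min\{1,\tilde\varrho/4\}$ via \eqref{AB03} makes all the geometric comparisons valid.
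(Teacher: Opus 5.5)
Your argument is correct, and it follows the standard route behind the estimate, which the paper imports from \cite[Lemma~3.4]{BS25} without giving a proof. You pass to $w=\bar x+z$ (resp.\ $w=\bar y+z$) and split into three pieces: the $J_p$-difference at fixed $w$ (controlled by $|u(\bar x)-u(\bar y)|\le [u]_{\upkappa}|\bar a|^{\upkappa}$ and the tail in $A$), the kernel difference of size $C|\bar a|\,|w-\bar x|^{-n-sp-1}$, and the $O(|\bar a|)$ boundary shell. Your separate near-exterior region is superfluous but harmless: with $R_0=\tilde\varrho$ the change of variables covers all of $B^c_{\tilde\varrho}$, and the shell $\{|w-\bar x|\approx\tilde\varrho\}$ still lies in $B_2$, where $|u|\le A$.
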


Now we refine the estimate of $I_3$ to a form suitable for our purpose.
\begin{lem}\label{L3.5}
Let  $u \in C^{0, \upkappa}(\bar B_{\varrho_2})$ for some $\upkappa\in [0, 1)$. Then the following hold.
\begin{itemize}
\item[(i)]  For $\varphi=\varphi_\gamma$ (see \eqref{varphi}) with 
$\gamma < \min\{\upkappa + \frac{1}{(p-1)}, 1, \frac{sp}{p-1}\}$ and $\delta=\varepsilon_1 |\abar|$ we have
\begin{equation}\label{I3holder}
I_3 \geq -C_{\varepsilon_1} |\abar|^{\gamma(p-1)-sp}
\end{equation}
for some $C_{\varepsilon_1}>0$ and for all $L\geq L_0$. Here $C_{\varepsilon_1}$ depends on $\varepsilon_1, \gamma, n, s, p, m_1, m_2, A$ and $\norm{u}_{C^{0, \upkappa}(B_{\varrho_2})}$.

\item[(ii)] Suppose that $\frac{sp+1}{p-1} > 1$. If $\upkappa$ can be chosen arbitrary close to $1$, then choosing $\varphi=\tilde\varphi(\frac{r_\circ}{3}\cdot)$ (see Lemma~\ref{L3.1}) and $\delta =\varepsilon_1 (\log^{2\rho}(|\abar|))^{-1}|\abar|$,
we have
\begin{equation}\label{I3lipschitz}
I_3 \geq -C_{\varepsilon_1}  |\abar|^{(p-1)-sp +\epsilon_\circ}
\end{equation}
for some $\epsilon_\circ >0$ and for all $L\geq L_0$, where $C_{\varepsilon_1}$ depends on $\varepsilon_1, n, s, p, m_1, m_2, A$ and $\norm{u}_{C^{0, \upkappa}(B_{\bar\varrho_2})}$.
\end{itemize}
\end{lem}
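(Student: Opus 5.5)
The plan is to start from Lemma~\ref{L3.3} and simply evaluate the three terms in its bracket with the concrete choices of $\delta$ and $\varphi$. Then compare each resulting power of $|\abar|$ with the target exponent, which is $\gamma(p-1)-sp$ in case (i) and $(p-1)-sp$ in case (ii). Since $|\abar|\le 1$, every term whose exponent is at least the target is dominated by $|\abar|^{\text{target}}$. The constant may depend on $\varepsilon_1$, which enters only through the lower limit $\delta$. The parameter $\vartheta\in(0,1)$ is free, and we will fix it close to $1$, depending on $\gamma,\upkappa,p,s$.

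\textbf{Case (i), $p>2$.} We use Lemma~\ref{L3.3}(i). This needs $\upkappa<\frac{sp}{p-2}$; if this fails, we first lower $\upkappa$, which only weakens the hypothesis. Take $\delta=\varepsilon_1|\abar|$.
\begin{itemize}
\item For the first integral $\int_\delta^{|\abar|^\vartheta} r^{\upkappa(p-2)+1-sp}\,dr$: if the exponent is $<-1$, it is bounded by $C\varepsilon_1^{\cdot}|\abar|^{\upkappa(p-2)+2-sp}$. We need $\upkappa(p-2)+2-sp\ge \gamma(p-1)-sp$, that is, $\gamma\le \frac{\upkappa(p-2)+2}{p-1}$. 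This follows from $\gamma<\upkappa+\frac1{p-1}$ and $\gamma<1$, because $\frac{\upkappa(p-2)+2}{p-1}$ is a convex combination-type bound: it is $\ge \min\{1,\upkappa+\frac1{p-1}\}$ when $\upkappa\le1$.
\item If that exponent is $\ge -1$, the integral is $O(1)$ up to a log. It is then bounded by $|\abar|^{\gamma(p-1)-sp}$, because $\gamma<\frac{sp}{p-1}$ makes the target exponent negative.
\item The second term behaves like $|\abar|^{\frac{m_2-1}{m_2}\upkappa}\,|\abar|^{\upkappa(p-2)+1-sp}$ in the worst case. Since $m_2$ is large, its exponent is about $\upkappa(p-1)+1-sp$. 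This is $\ge\gamma(p-1)-sp$ exactly when $\gamma\le\upkappa+\frac1{p-1}$, with some room after choosing $m_2$ large.
\item The third term has exponent $\upkappa+\vartheta(\upkappa(p-2)-sp)$. As $\vartheta\to1$ this tends to $\upkappa(p-1)-sp$, which is strictly bigger than $\gamma(p-1)-sp-1$. So we should get $\ge\gamma(p-1)-sp$ for $\vartheta$ close to $1$, once we use $\gamma<\upkappa+\frac{1}{p-1}$ together with the positivity margin. If needed, $\vartheta$ near $1$ makes the strict inequality persist.
\end{itemize}

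\textbf{Case (i), $p\in(1,2]$.} The analogous bookkeeping with Lemma~\ref{L3.3}(ii) uses $2(p-1)-sp$, $\upkappa(p-1)\frac{m_2-1}{m_2}+p-1-sp$ and $\upkappa(p-1)-\vartheta sp$. Each of these exceeds $\gamma(p-1)-sp$ under $\gamma<\min\{1,\upkappa+\frac1{p-1}\}$, since $(p-1)\upkappa+(p-1)\ge(p-1)\gamma$ and $2(p-1)\ge\gamma(p-1)$.

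\textbf{Case (ii).} The target is $(p-1)-sp+\epsilon_\circ$. Now $\delta=\varepsilon_1|\abar|(\log^{2\rho}|\abar|)^{-1}$, so lower-limit contributions acquire extra powers of $\log|\abar|$. We absorb these by sacrificing a small power of $|\abar|$, and then require a strict exponent gap.
\begin{itemize}
\item For $p>2$, the first term gives exponent $\min\{0,\upkappa(p-2)+2-sp\}$. With $\upkappa\to1$ this is $\min\{0,p-sp\}$, which exceeds $p-1-sp$ by at least $\min\{1,\,sp+1-(p-1)\}>0$. This uses exactly the hypothesis $\frac{sp+1}{p-1}>1$.
\item The second term gives about $\upkappa(p-1)+1-sp$, which beats $p-1-sp$ by nearly $1$.
\item The third term gives $\upkappa+\vartheta(\upkappa(p-2)-sp)\to p-1-sp+(1-\vartheta)(sp-p+2)$. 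This has a positive gap provided $sp>p-2$, which is again our hypothesis, once $\upkappa$ is close enough to $1$ relative to $\vartheta$.
\end{itemize}
Here the constraint $\upkappa<\frac{sp}{p-2}$ is compatible with $\upkappa\to1$ precisely because $sp>p-2$. For $p\le2$ the exponents $2(p-1)-sp$, $p-1-sp+\upkappa(p-1)\frac{m_2-1}{m_2}$ and $\upkappa(p-1)-\vartheta sp$ all exceed $p-1-sp$ by a fixed positive amount for $\upkappa$ near $1$ and $\vartheta$ near $1$.

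The main obstacle is the third term in case (ii). There the gap is $(1-\vartheta)(sp-p+2)-(1-\upkappa)(\cdots)$, so $\upkappa$ must be chosen close to $1$ after $\vartheta$. We then set $\epsilon_\circ$ to be half the minimum gap, so that the logarithmic factors are absorbed.
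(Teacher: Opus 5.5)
There is a genuine gap, and it comes from the direction in which you move $\vartheta$. In part (i), the third term of Lemma~\ref{L3.3} has exponent $\upkappa+\vartheta(\upkappa(p-2)-sp)$ for $p>2$, resp.\ $\upkappa(p-1)-\vartheta sp$ for $p\le 2$. This exponent is \emph{decreasing} in $\vartheta$, since $\upkappa(p-2)<sp$. So $\vartheta\to 1$ is the worst possible choice: the exponent tends to $\upkappa(p-1)-sp$, which lies strictly below the target $\gamma(p-1)-sp$ whenever $\gamma>\upkappa$. But $\gamma\in(\upkappa,\upkappa+\frac{1}{p-1})$ is exactly the range (i) is needed for; the bootstrap in Proposition~\ref{L3.6} uses $\gamma$ up to $\upkappa+\frac{1}{2(p-1)}$. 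Concretely, $p=3$, $sp=2.7$, $\upkappa=0.5$, $\gamma=0.9$ gives third exponent $0.5-2.2\vartheta$ against target $-0.9$, which fails for every $\vartheta>7/11$. Your remark that $\upkappa(p-1)-sp>\gamma(p-1)-sp-1$ does not yield the required inequality. In the $p\le 2$ paragraph, the justification $(p-1)\upkappa+(p-1)\ge(p-1)\gamma$ does not address the term $\upkappa(p-1)-\vartheta sp$ at all. The repair is to take $\vartheta$ \emph{small}. At $\vartheta=0$ the third exponent is $\upkappa$ (resp.\ $\upkappa(p-1)$), which exceeds the negative target. The upper-limit contributions $|\abar|^{\vartheta(\cdot)}$ of the two integrals are then $O(1)$, hence harmless because $\gamma<\frac{sp}{p-1}$. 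The lower-limit contributions at $\delta=\varepsilon_1|\abar|$ do not involve $\vartheta$, and your exponent comparisons for them are correct. (The paper itself simply cites \cite{BT25} for (i).)

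In part (ii) with $p>2$, an arithmetic slip hides the real constraint. You bound the first term by $O(1)$ and say this beats $p-1-sp$ by $\min\{1,sp+1-(p-1)\}$. But $0-(p-1-sp)=sp-(p-1)$, which is negative for $p-2<sp<p-1$. That is precisely the regime where the hypothesis $\frac{sp+1}{p-1}>1$ matters and the target exponent is positive, so an $O(1)$ bound is useless there. You must keep the upper-limit power $|\abar|^{\vartheta(\upkappa(p-2)+2-sp)}$ and require $\vartheta>\frac{p-1-sp}{\upkappa(p-2)+2-sp}$. Here $\vartheta$ near $1$ is the right call. Fix $\vartheta\in(\frac{p-1-sp}{p-sp},1)$; at $\upkappa=1$ the first- and third-term gaps over $p-1-sp$ are $1-(1-\vartheta)(p-sp)>0$ and $(1-\vartheta)(sp-p+2)>0$, so both survive for $\upkappa$ close to $1$. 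With that correction (ii) goes through. It is then a simpler route than the paper's, which sets $\vartheta=\frac{(p-1)-sp+\epsilon}{\upkappa(p-2)+2-sp}$ at the first-term threshold and checks the third term through the quadratic $\ell(\upkappa)$. In short, (i) and (ii) need opposite choices of $\vartheta$: small when the target is negative and $\gamma$ may exceed $\upkappa$, large when the target can be positive and $\gamma-\upkappa\to 0$. A single ``$\vartheta$ close to $1$'' cannot serve both.
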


\begin{proof}
For $p>2$, (i) follows from \cite[Proposition~3.5]{BT25} (see equation (3.13) there) and for $p\in (1, 2)$, it can be found in the proof of Theorem~2.1 in Section~4.2 of \cite{BT25}.

Now consider (ii). First, we suppose that $p\in (2, \infty)$. By our hypothesis, $sp>p-2$. First, we suppose $sp\geq p-1$.
Choose $\upkappa\in (0, 1)$ large enough such that $\upkappa(p-1) + 1 - sp>0$ and $m_2\geq 3$ large enough so that
$$\frac{m_2-1}{m_2}\upkappa+ \upkappa(p-2) + 1 - sp>0.$$
Now set $\vartheta\in (0, 1)$ small enough so that  $\upkappa + \vartheta[\upkappa(p-2)  - sp]>0$. Now it is easily seen that
\begin{align*}
\int_\delta^{|\abar|^\vartheta} r^{\upkappa(p-2) + 1-sp} \dr &\leq C |\abar|^{\vartheta[\upkappa(p-2) + 2-sp]},
\\
|\abar|^{\frac{m-1}{m}\upkappa} \int_\delta^{|\abar|^\vartheta} r^{\upkappa(p-2) -sp} \dr &\leq C_{\varepsilon_1} (\log^{2\rho}(|\abar|))^{sp-1-\upkappa(p-2)}
|\abar|^{\frac{m-1}{m}\upkappa+ \upkappa(p-2) + 1 - sp},
\end{align*}
where in the last inequality we used  $\upkappa(p-2)-sp\leq \upkappa(p-2) - (p-1)\leq (p-2)(\upkappa-1)-1<-1$. Thus, by Lemma~\ref{L3.3}(i), 
and our choice of parameters, we can find $\epsilon_\circ$ satisfying
$I_3\geq -C_{\varepsilon_1} |\abar|^{\epsilon_\circ}$.

Next we suppose $sp<p-1$ and choose $\upkappa\in (\frac{sp}{p-1}, 1)$. Define $\vartheta=\frac{(p-1)-sp + \epsilon}{\upkappa(p-2)+2-sp} $. Since $\upkappa(p-2)+2-sp\geq 2-\upkappa>1$ and $(p-1)-sp < (p-1)-(p-2)=1$
we have $\vartheta\in (0, 1)$ for $\epsilon$ small enough. We use this $\vartheta$ in Lemma~\ref{L3.3}.
It is then easy to see that
\begin{align*}
\int_{\delta}^{|\abar|^{\vartheta}} r^{\upkappa(p-2) +1-sp} &\leq \frac{1}{\upkappa(p-2)+2-sp} |\abar|^{\vartheta(\upkappa(p-2)-sp+2)}
\\
&\leq  |\abar|^{(p-1)-sp +  \epsilon}
\end{align*}
using the fact that $\upkappa(p-2)+2-sp>1$. It can be easily checked that
$$
\upkappa + \vartheta[\upkappa(p-2)-sp]> (p-1)-sp 
\Leftrightarrow 
1 < \frac{1}{2(p-1)} \left[\upkappa(\upkappa(p-2)+2-sp)+2sp \right] +\tilde\epsilon,
$$
where $\tilde\epsilon=\frac{\epsilon}{2(p-1)}$. Now, we
note that
\begin{align*}
\frac{1}{2(p-1)} \left[\upkappa(\upkappa(p-2)+2-sp)+2sp \right]-\upkappa
&=  \frac{\ell(\upkappa)}{2(p-1)},
\end{align*}
where 
$$\ell(y)= y^2(p-2) -[sp+2(p-2)]y + 2sp.$$
We have $\ell(\frac{sp}{p-2})=0$, $\ell(1) =sp-(p-2) >0$ and $\ell'(1)=-sp<0$ . 
Thus the function $\ell$ is strictly decreasing and positive in $(-\infty, 1]$.
Let 
$$\uprho =\inf_{t \in (0,1)}\frac{\ell(t)}{4(p-1)}=\frac{\ell(1)}{4(p-1)} > 0.$$
We choose $\upkappa$ close to $1$ so that $\upkappa+\uprho>1$, and then for $\epsilon$ small enough we get
\begin{equation}\label{here}
(p-1)-sp<  \upkappa + \vartheta[\upkappa(p-2)-sp].
\end{equation}
Since $|\abar| < 1$, we  have
$$|\abar|^{\upkappa+\vartheta(\upkappa(p-2)-sp)} \leq |\abar|^{(p-1) -sp +\varepsilon_2}$$
for some $\varepsilon_2>0$.
Set $m_2$ large enough such that $\frac{\upkappa}{m_2} < \vartheta$ which would imply $\upkappa\frac{m_2-1}{m_2} + \vartheta(\upkappa(p-2)-sp+1) > \upkappa + \vartheta(\upkappa(p-2)-sp)$.
Therefore, from the above estimate, we have
\begin{align*}
|\abar|^{\frac{m_2-1}{m_2}\upkappa } \int_{\delta}^{|\abar|^{\vartheta}} r^{\upkappa(p-2)-sp} &\leq \frac{1}{\upkappa(p-2)-sp+1} |\abar|^{\upkappa \frac{m_2-1}{m_2} + \vartheta (\upkappa(p-2)-sp+1)}
\\
&\leq C |\abar|^{(p-1)-sp + \varepsilon_2},
\end{align*}
using the fact $\upkappa(p-2)-sp+1>0$.
Combining the above three estimates in Lemma~\eqref{L3.3}(i) we have \eqref{I3lipschitz} for $p>2$.

Next, we consider $p\in (1, 2]$. Choose $\upkappa\in (\frac{p-1}{p}, 1)$ large enough such that 
$sp + \upkappa(p-1)>(p-1)$.
Now choose $m_2\geq 3$ large enough so that 
$$ \frac{m_2-1}{m_2}\upkappa + (p-1)\upkappa>(p-1)-sp.$$
Set $\vartheta\in (0, 1)$ small enough so that $(p-1-sp)<\upkappa(p-1)- \vartheta sp$. We again, evaluate the terms in Lemma~\ref{L3.3}(ii).
From our choice of parameters
$$|\abar|^{\frac{m_2-1}{m_2}\upkappa(p-1)} \int_\delta^{|\abar|^{\vartheta}} r^{p-2-sp} dr \leq C |\abar|^{\frac{m_2-1}{m_2} \upkappa(p-1)} [ |\log(\delta)|1_{\{sp=p-1\}} + \delta^{p-1-sp}1_{\{sp>p-1\}}].$$
Form the definition of $\delta$, given by (ii), and the fact that $|\abar|\to 0$ as $L\to \infty$, we can find $L_0, \varepsilon_3>0$ so that 
$$|\abar|^{\frac{m_2-1}{m_2}\upkappa(p-1)} \int_\delta^{|\abar|^{\vartheta}} r^{p-2-sp} dr\leq C_{\varepsilon_1} |\abar|^{p-1-sp+ \varepsilon_3}$$
for $L\geq L_0$, where the constant $C_{\varepsilon_1}$ depends on $\varepsilon_1$. On the other hand, if $2(p-1)-sp>0$, we obtain
$$\int_\delta^{|\abar|^\vartheta} r^{2(p-1) - 1-sp} dr\leq C |\abar|^{\vartheta(2(p-1)-sp)},$$
and if $2(p-1)-sp\leq 0$, we have $(p-1)-sp + \frac{p-1}{2}< 0$, giving us
$$\int_\delta^{|\abar|^\vartheta} r^{2(p-1) - 1-sp} dr\leq \int_\delta^{1} r^{(p-1)+\frac{p-1}{2} - 1-sp} dr\leq C_{\varepsilon_1}|\abar|^{p-1-sp +\varepsilon_4}$$
for some positive $\varepsilon_4$ and $L\geq L_0$, provided we choose $L_0$ large enough. Thus, combining these estimates
in Lemma~\ref{L3.3}, we have \eqref{I3lipschitz}.
\end{proof}

\subsection{Proof of Theorem~\ref{Tmain-2}}
Now we can provide a proof of Theorem~\ref{Tmain-2}. First, we estimate $\mathcal{H}$ using \hyperlink{H1}{(H1)}.
Suppose that $u\in C^{0, \upkappa}(\bar{B}_{\varrho_2})$ for some $\upkappa\in [0, 1)$. The case $\upkappa=0$ should be understood 
as $u\in C(\bar{B}_{\varrho_2})$. Letting $\varphi=\varphi_\gamma$ in \eqref{AB03} we get that
\begin{equation}\label{ET1.2A}
L|\abar|^\gamma\leq \norm{u}_{C^{0, \upkappa}(\bar{B}_{\varrho_2})} |\abar|^\upkappa \Rightarrow
L |\abar|^{\gamma-\upkappa}\leq \norm{u}_{C^{0, \upkappa}(\bar{B}_{\varrho_2})}.
\end{equation}
Again, since $\Phi(\xbar,\ybar)>0$, we obtain $m_1\psi(\xbar)\leq u(\xbar)-u(\ybar)$. From the definition of $\psi$ we then get
$$
(|\xbar|^2-\varrho^2_1)_+\leq m_1^{-\frac{1}{m_2}} \left(\norm{u}_{C^{0, \upkappa}(\bar{B}_{\varrho_2})} |\abar|^\upkappa\right)^{\frac{1}{m_2}},
$$
leading to
\begin{equation}\label{ET1.2B}
|\grad \psi(\xbar)|\leq 4 m_2 [(|\xbar|^2-\varrho^2_1)_+]^{m_2-1}\leq \kappa |\abar|^{\frac{\upkappa(m_2-1)}{m_2}}
\end{equation}
for some constant $\kappa$, dependent on $m_1, m_2$ and $\norm{u}_{C^{0, \upkappa}(\bar{B}_{\varrho_2})}$. Thus, using
\hyperlink{H1}{(H1)} we get
\begin{align}
|\mathcal{H}| &=\left| H(\xbar, \grad_x\phi(\xbar,\ybar)) - H(\ybar, -\grad_y\phi(\xbar,\ybar)) \right| \nonumber
\\
&\leq C_{H,2}\left[ |\abar| (1+ |\grad_y\phi(\xbar,\ybar)|^m) + m_1|\grad\psi(\xbar)| (|\grad_y\phi(\xbar,\ybar)|^{m-1}
+ |\grad\psi(\xbar)|^{m-1} +1 ) \right] \nonumber
\\
&\leq \kappa_1  \left[|\abar| L^m |\abar|^{m(\gamma-1)} +  |\abar|^{\frac{\upkappa(m_2-1)}{m_2}} 
L^{m-1} |\abar|^{(m-1)(\gamma-1)}\right] \nonumber
\\
&=  \kappa_1  \left[L^{m-1} |\abar|^{(m-1)(\gamma-1)} L |\abar|^\gamma  +  |\abar|^{\frac{\upkappa(m_2-1)}{m_2}} 
L^{m-1} |\abar|^{(m-1)(\gamma-1)}\right] \nonumber
\\
&\leq \kappa_2 L^{m-1} |\abar|^{(m-1)(\gamma-1)} |\bar a|^{\upkappa \frac{m_2-1}{m_2}} \label{Hestim},
\end{align}
where in the third line we use \eqref{ET1.2B} and the fact $|\grad\psi(\xbar)|\leq L |\abar|^{\gamma-1}$, and in the fifth line
we use \eqref{ET1.2A}. It is also useful to note that the estimate works with the Lipschitz profile function (see \eqref{varphi})
and in this case, \eqref{Hestim} holds with $\gamma=1$.

We need the following lemma.
\begin{prop}\label{L3.6}
Assume $1< m \leq sp$, $f\in C^{0,1}(\bar{B}_2)$ and let $u$ be a viscosity solution to~\eqref{main}. Then
\begin{itemize}
\item[$(i)$] for $p-2 < sp$, $u \in C^\gamma_{\rm loc}(B_2)$ for every $\gamma \in (0,1)$. 

\item[$(ii)$] for $p -2 \geq sp$, $u \in C^\gamma_{\rm loc}(B_2)$ for every $\gamma \in (0, \frac{sp - m + 1}{p - m - 1})$.
\end{itemize}
\end{prop}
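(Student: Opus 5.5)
We prove Proposition~\ref{L3.6} by a bootstrap in the H\"older exponent, following the Ishii--Lions scheme of Subsection~\ref{S-genstr} with the H\"older profile $\varphi=\varphi_\gamma$ and $\delta=\varepsilon_1|\abar|$. The starting point is $\upkappa_0=0$, i.e.\ $u\in C(\bar B_{\varrho_2})$. Suppose we already know $u\in C^{0,\upkappa}(\bar B_{\varrho_2})$ for some $\upkappa\in[0,1)$. We then take
\[
\gamma<\min\Bigl\{\upkappa+\tfrac{1}{p-1},\,1,\,\tfrac{sp}{p-1}\Bigr\}
\]
and try to show that $\Phi\le 0$ for $L$ large. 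This gives $u\in C^{0,\gamma}$ on a slightly smaller ball. Iterating finitely many times over a chain $\varrho_1<\dots<\varrho_2$ of radii, and eventually shrinking to any compact subset of $B_2$, yields the claimed exponents.

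In \eqref{E2.7} we combine the estimates as follows. Lemma~\ref{L3.1}(i) gives $I_1\ge CL^{p-1}|\abar|^{\gamma(p-1)-sp}$. Lemma~\ref{L3.2} gives
\[
I_2\ge -CL^{p-1}\varepsilon_1^{p(1-s)}|\abar|^{\gamma(p-1)-sp},
\]
since $\varphi'(|\abar|)=\gamma|\abar|^{\gamma-1}$, so $I_2$ is absorbed by fixing $\varepsilon_1$ small. Lemma~\ref{L3.5}(i) gives $I_3\ge -C_{\varepsilon_1}|\abar|^{\gamma(p-1)-sp}$, which is absorbed because $L^{p-1}\to\infty$. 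Lemma~\ref{L3.4} makes $I_4$ bounded, and this is harmless because $\gamma(p-1)-sp<0$. The right-hand side $C|\abar|$ is trivially absorbed as well. The remaining term is $\mathcal H$, and since $m\le sp$ we cannot use coercivity in a favourable way. We therefore treat $\mathcal H$ as an error, using \eqref{Hestim}:
\[
|\mathcal H|\le \kappa_2 L^{m-1}|\abar|^{(m-1)(\gamma-1)+\upkappa\frac{m_2-1}{m_2}}.
\]
The contradiction then requires $L^{m-1}|\abar|^{(m-1)(\gamma-1)+\upkappa'}\ll L^{p-1}|\abar|^{\gamma(p-1)-sp}$, where $\upkappa'=\upkappa(m_2-1)/m_2$ can be taken arbitrarily close to $\upkappa$. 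Since $|\abar|\to0$ and $L|\abar|^{\gamma-\upkappa}$ is bounded by \eqref{ET1.2A}, we write $L^{m-1}=L^{p-1}L^{m-p}$ and treat the two signs separately. If $m\ge p$, we use $L\le C|\abar|^{\upkappa-\gamma}$ on $L^{m-p}$. If $m<p$, the factor $L^{m-p}$ is already small. In both cases we need the exponent inequality
\[
(m-1)(\gamma-1)+\upkappa-\bigl(\gamma(p-1)-sp\bigr)>0
\]
(with a correction $(m-p)_+(\upkappa-\gamma)$ when $m\ge p$). Since $\gamma(p-1)<sp$ and $m\le sp$, this reduces to a linear condition relating $\gamma$ and $\upkappa$, roughly
\[
\gamma(p-m)<sp-m+1+\upkappa \quad\text{when } m<p.
\]

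We then iterate. If $p-2<sp$, the constraints allow each step to increase the exponent by a fixed amount until $\gamma$ reaches any value below $1$. Here $\frac{sp}{p-1}\ge 1$ or only mildly restrictive, and the step $\upkappa+\frac1{p-1}$ is not the bottleneck; the key point is that the threshold $\frac{sp-m+1+\upkappa}{p-m}$ exceeds $\upkappa$ by a definite margin, because $sp-m+1>\upkappa(p-m-1)$ whenever $sp>p-2$. This gives (i). If $p-2\ge sp$, the fixed point of the recursion
\[
\gamma=\frac{sp-m+1+\gamma'}{p-m}\quad\text{with } \gamma'\to\gamma
\]
is $\gamma^*=\frac{sp-m+1}{p-m-1}$, so the iteration converges to exponents arbitrarily close to $\gamma^*$ but no further. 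This gives (ii). The cap $\gamma<\frac{sp}{p-1}$ required by Lemma~\ref{L3.5}(i) must also be checked against $\gamma^*$; when it binds, we expect to remove it by a separate treatment of $I_3$ (or use the direct bound in Lemma~\ref{L3.3}).

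The main obstacle is the bookkeeping of exponents. One must verify that the recursion for $\gamma$ is increasing, stays below the constraints of Lemma~\ref{L3.5}(i), and has exactly the stated limit in each regime. One must also ensure that the term $|\abar|^{\upkappa/m_2}$ lost through $m_2$, and the constants depending on $\|u\|_{C^{0,\upkappa}}$, do not break the strict inequalities. This is why the final exponents are open-ended rather than attained.
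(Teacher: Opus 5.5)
Below the exponent $\frac{sp}{p-1}$ your scheme is the paper's own: the Hölder profile $\varphi_\gamma$ with $\delta=\varepsilon_1|\abar|$, Lemmas~\ref{L3.1}(i), \ref{L3.2}, \ref{L3.5}(i), \ref{L3.4}, the bound \eqref{Hestim}, the condition $\gamma(p-m)<sp-m+1+\upkappa$, and the recursion with fixed point $\frac{sp-m+1}{p-m-1}$. Incidentally, $m\le sp<p$, so your case $m\ge p$ never occurs.

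\textbf{The gap is everything above $\frac{sp}{p-1}$.} Your chain of estimates only works while $\gamma(p-1)-sp<0$. Lemma~\ref{L3.5}(i) needs $\gamma<\frac{sp}{p-1}$, and you absorb $I_4$ and $C|\abar|$ precisely because that exponent is negative. Your remark that this cap is ``$\ge 1$ or only mildly restrictive'' is wrong whenever $sp<p-1$: then $\frac{sp}{p-1}<1$ is a hard ceiling for your argument. This is fatal for (i) in the range $p-2<sp<p-1$. It is also fatal for (ii) whenever $\frac{sp-m+1}{p-m-1}>\frac{sp}{p-1}$; for example $p=5$, $sp=\frac52$, $m=2$ gives $\frac34>\frac58$. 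Saying you ``expect to remove'' the cap leaves out exactly the step that proves these cases.

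\textbf{What the missing second stage requires.} Both $I_3$ and the non-singular terms must be treated differently once the cap is reached.

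First, $I_3$. Once $u\in C^{0,\upkappa}$ with $\upkappa$ at $\frac{sp}{p-1}$ (the paper gets there first), apply Lemma~\ref{L3.3}(i) directly with the tailored choice $\vartheta=\frac{\gamma(p-1)-sp+\epsilon}{\upkappa(p-2)+2-sp}$. This makes all three powers $|\abar|^{\beta_i}$ equal to $o(|\abar|^{\gamma(p-1)-sp})$. For the critical one, $\beta_3=\upkappa+\vartheta(\upkappa(p-2)-sp)$, this holds for $\gamma$ close to $\upkappa$ exactly because $\ell(\upkappa)=(\upkappa-2)\bigl((p-2)\upkappa-sp\bigr)>0$, i.e.\ $\upkappa<\frac{sp}{p-2}$. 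This is the computation behind \eqref{here}. It is where $sp>p-2$ enters in (i), and where $\frac{sp-m+1}{p-m-1}\le\frac{sp}{p-2}$ enters in (ii).

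Second, the non-singular terms. Since now $\gamma(p-1)-sp>0$, the main term $L^{p-1}|\abar|^{\gamma(p-1)-sp}$ no longer dominates $|I_4|\lesssim|\abar|^{\upkappa}$ and $C|\abar|$ for free. You need the extra constraint $\gamma(p-1)-sp\le\upkappa$, as in \eqref{here2}.

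Finally, check compatibility with $\gamma(p-m)<sp-m+1+\upkappa\frac{m_2-1}{m_2}$. Both constraints admit some $\gamma>\upkappa$ with a uniform increment, as long as $\upkappa$ stays below a fixed target less than $\min\{1,\frac{sp}{p-2},\frac{sp-m+1}{p-m-1}\}$. This minimum is $1$ in case (i) with $sp<p-1$, and it is $\frac{sp-m+1}{p-m-1}$ in case (ii), which is exactly what the statement claims.
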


\begin{proof}
Assume $u\in C^{0, \upkappa}(\bar{B}_{\varrho_2})$ for 
some $\upkappa\in[0, 1)$, starting with the case $\upkappa=0$, and we provide an iterative process to get the expected estimate. Take $\gamma \in (0, \min \{1, \frac{sp}{p-1}, \frac{1}{p-1} \})$, $\varphi_\gamma$ as in~\eqref{varphi}, and use this function into~\eqref{E2.2}. Then, by Lemmas~\ref{L3.1}, \ref{L3.2} and ~\ref{L3.5} we can first choose $\varepsilon_1$ suitably small and then $L_0$ large enough, dependent on $\varepsilon_1$, in order to get
\begin{equation}\label{here1} 
I_1+I_2+I_3\geq C L^{p-1}|\abar|^{\gamma(p-1)-sp},
\end{equation}
for all $L\geq L_0$. Using this, together with Lemma~\ref{L3.4} and the estimates \eqref{Hestim} in \eqref{E2.7} we arrive at
$$  L^{p-1}|\abar|^{\gamma(p-1)-sp}\leq  C (|\abar|^\upkappa+|\abar|) + C L^{m-1} |\abar|^{(m-1)(\gamma-1)} |\bar a|^{\upkappa \frac{m_2-1}{m_2}}$$
for all $L\geq L_0$. 

Last inequality drives us to 
\begin{equation}\label{EL3.6A}
  1\leq  C L^{1-p}|(|\abar|^\upkappa +|\abar|) + C L^{m-p} |\abar|^{\gamma(m-p)+sp-m+1} |\bar a|^{\upkappa \frac{m_2-1}{m_2}}.
 \end{equation}
 
 At this point, we introduce the notation
$$
\gamma_0 =\frac{sp-m+1}{p-m} > 0.
$$

Then, taking $\gamma < \min \{ 1, \frac{sp}{p-1}, \frac{1}{p - 1}, \gamma_0 \}$, we conclude that
\begin{equation*}
  1\leq  C L^{1-p}(|\abar|^\upkappa +|\abar|) + C L^{m-p}, 
 \end{equation*}
for some $C > 0$ depending on the data and $A$.  Notice that the RHS of the above display tends to zero as $L$ enlarges. 
This is a contradiction for all $L\geq L_0$ if we set $L_0$ large enough that violates the above inequality. Therefore, $\Phi\leq 0$ in $B_2\times B_2$. Since $\psi=0$ in $B_{\varrho_1}$, it implies that 
$$ 
|u(x)-u(y)|\leq L_0 |x-y|^{\gamma},
$$
with $\gamma <  \min \{1, \frac{1}{p - 1}, \frac{sp}{p - 1}, \gamma_0\}$.

Now, fix $\upkappa <  \min \{1, \frac{1}{p - 1}, \frac{sp}{p - 1}, \gamma_0\}$ and without loss of generality we assume $u \in C^\upkappa_{\rm loc}(B_2)$. Following the same procedure as above and taking 
$\gamma = \min \{1, \upkappa + \frac{1}{2(p - 1)}, \frac{sp}{p - 1}, \gamma_0\}$ we arrive at~\eqref{EL3.6A}, from which we conclude a contradiction for $L$ large enough. 
Applying a bootstrapping argument we see
that $u$ is locally $\tilde \gamma$-H\"{o}lder for any $\tilde \gamma \in (0, \min \{ 1, \frac{sp}{p-1}, \gamma_0 \})$. Using a simple covering argument we see that $u\in C^{0, \tilde \gamma}_{\rm loc}(B_2)$. 

Now, if $p - 1 \leq sp$ we have $\gamma_0\wedge \frac{sp}{p - 1} \geq 1$ and therefore,  we do not have any restriction to continuing the iterative process until reach any exponent $\gamma < 1$, giving us from which 
$u \in C^{0, \gamma}_{\rm loc}(B_2)$ for any $\gamma \in (0,1)$ and $(i)$ in this sub-case follows.

\smallskip

Now we deal with the case $sp < p - 1$. We note that $\gamma_0 < \frac{sp}{p-1} < 1$. Therefore, letting $\upkappa$ close to $\gamma_0$ so that $\upkappa+\frac{1}{2(p-1)}>\gamma_0$ and using the argument of the first part
we see that $u \in C^{0, \gamma_0}_{\rm loc}(B_2)$.  Notice that in this case we have $p - m > 1$ and the geometric series with ratio $(p - m)^{-1}$ converges. 
Define
\begin{equation}\label{gamma*}
\gamma^* := \gamma_0 \sum_{k=0}^\infty \frac{1}{(p - m)^k}=\frac{sp-m+1}{p-m-1}.
\end{equation}
At this point, we first consider the sub-case $p - 2 < sp < p-1$ (this also implies $p>2$ as $1<m\leq sp$). Thus
\begin{align*}
\gamma^*  > 1,
\end{align*} 
and we can take $\alpha \in (0,1)$ small enough in order to have
\begin{align*}
\gamma_0 \sum_{k=0}^\infty \frac{1}{(p - m + \alpha)^{k}} > 1.
\end{align*}
With this, we define
$$
\gamma_k = \gamma_0 \sum_{i=0}^{k}(p - m +\alpha)^{-i}.
$$

Denote $\bar k \in \mathbb N$ the largest number for which $\gamma_{\bar k} \leq \frac{sp}{p - 1}$. We prove that for each $k \leq \bar k - 1$, if $u \in C^{0, \gamma_k}_{\rm loc}$, then 
$u \in C^{0, \gamma_{k + 1}}_{\rm loc}$. In fact, following exactly the steps of the first part of the proof with $\gamma= \gamma_{k + 1}$ and $\gamma_k = \upkappa$, inequality~\eqref{EL3.6A} takes the form
\begin{equation}\label{shoulder}
  1\leq  C L^{1-p}(|\abar|^{\gamma_k} +|\abar|) + C L^{m-p} |\abar|^{\gamma_{k+1}(m-p)+sp-m+1} |\bar a|^{\gamma_k \frac{m_2-1}{m_2}}.
\end{equation} 

Using that $\gamma_{k + 1} = \gamma_0 + \frac{\gamma_k}{p - m + \alpha}$, we see that the exponent of $|\bar a|$ in the last term of the right-hand side can be written as
\begin{align*}
& \gamma_{k+1}(m-p)+sp-m+1 + \gamma_k \frac{m_2-1}{m_2} \\
& =  (p - m) \Big{(} -\gamma_{k + 1}  + \gamma_0 + \gamma_k \frac{m_2 - 1}{m_2} \frac{1}{p - m} \Big{)} \\
 &  = (p - m) \gamma_k \Big{(} -\frac{1}{p - m +\alpha} + \frac{m_2 - 1}{m_2} \frac{1}{p - m} \big{)},
\end{align*}
and therefore, taking $m_2$ large enough in terms of $p,m$ and $\alpha$, but not on $k$, we conclude that the exponent is nonnegative. Thus, we arrive at
$$
  1\leq  C L^{1-p}(|\abar|^{\gamma_k} +|\abar|) + C L^{m-p}, 
$$
and we reach a contradiction by taking $L$ large enough. Since the argument goes through by taking 
$\gamma = \frac{sp}{p-1}$,
when we know that $u \in C^{0, \gamma_{\bar k}}_{\rm loc}(B_2)$, we obtain that $u \in C^{0, \frac{sp}{p - 1}}_{\rm loc}(B_2)$, see~\cite[Theorem 2.1]{BT25}.

 To improve the regularity further we cannot rely on Lemma~\ref{L3.5}-$(i)$. 
{ Fix $\tilde\gamma\in (\upkappa, 1)$ and $\gamma\in (\upkappa, \tilde\gamma]$}. Using Lemma~\ref{L3.3}-$(i)$ to write, for each $\upkappa \in [sp/(p-1), 1)$ and $\vartheta \in (0,1)$ the estimate
$$
I_3 \geq - C (|\bar a|^{\beta_1} + |\bar a|^{\beta_2} + |\bar a|^{\beta_3} ),
$$
with $\beta_i, i=1,2,3$, are given by
\begin{equation*}
\beta_1 = \vartheta (\upkappa(p - 2) - sp + 2), \quad \beta_2 = \frac{m_2 - 1}{m_2} \upkappa + \vartheta(\upkappa(p - 2) -sp + 1), \quad \beta_3 = \upkappa + \vartheta(\upkappa(p-2) -sp).
\end{equation*}
Since $\upkappa(p-2) -sp +\upkappa\geq 0$ for $\upkappa\geq \frac{sp}{p-1}$, 
we have $\beta_i>0$ for $i=1,2,3$. We choose $\vartheta=\frac{\gamma(p-1)-sp+\epsilon}{\upkappa(p-2)+2-sp}$. For small enough $\epsilon$ we have $\vartheta\in (0, 1)$. It is easy to see that $\beta_1>\gamma(p-1)-sp$. Also, if we choose
$\epsilon\in (0,1)$ small enough the arguments of Lemma~\ref{L3.5}(ii) (see \eqref{here}) gives
$$(p-1)-sp<  \upkappa + \vartheta[\upkappa(p-2)-sp].$$
Hence we have $\beta_2, \beta_3>\gamma(p-1)-sp$, provided we choose $m_2$ suitably large. Therefore, \eqref{here1} hold for all large $L$, and \eqref{shoulder} takes the form
\begin{equation}\label{here2}
1\leq  C L^{1-p}(|\abar|^{\upkappa} +|\abar|)|\abar|^{sp-\gamma(p-1)} + C L^{m-p} |\abar|^{\gamma(m-p)+sp-m+1} |\bar a|^{\upkappa \frac{m_2-1}{m_2}}.
\end{equation}
At this point, we note that if $\gamma<\gamma_0 + \frac{\upkappa}{p-m}$ the exponent of $|\abar|$ to the rightmost term
can be made positive but we also need care for the exponents in the first two terms as $sp-\gamma(p-1)<0$.
Given any $\tilde\gamma\in (0, 1)$, if we let $\gamma\leq  \{\upkappa +\frac{sp-\tilde\gamma(p-2)}{p-1}, 
\upkappa + \frac{sp-p+2}{p-m}\}$, we have
$$\gamma(p-1)-sp \leq \upkappa,$$
and
\begin{align*}
\gamma (m-p)+ sp-m+1 + \upkappa =(p-m)\left( - \frac{sp-p+2}{p-m} + \frac{sp-m+1}{p-m}\right) =\frac{p-1-m}{p-m}>0,
\end{align*}
as $m\leq sp<p-1$. Therefore, for large enough $m_2$ we have the exponent of $|\abar|$ to the right most term of \eqref{here2} positive. With this choice of $\gamma$ we get a contradiction form \eqref{here2} as we enlarge $L$, giving us
$C^{0, \gamma}_{\rm loc}(B_2)$ regularity. Now we can bootstrap the above iteration to conclude $C^{0, \tilde\gamma}_{\rm loc}(B_2)$ regularity.




\smallskip

For the case $sp \leq p-2$, we see that the series defining $\gamma^*$ in~\eqref{gamma*} is still convergent, but this time $\gamma^* < 1$. It is easy to see that $\gamma^* \leq \frac{sp}{p-2}$, and our argument above goes through to conclude the second point of the proposition. This concludes the proof.

\end{proof}

\begin{rem}\label{R3.1}
From the proof of Lemma~\ref{L3.6} it can be easily seen that for $1\leq m\leq sp\leq p-2$ 
and the Hamiltonian satisfies \hyperlink{H1}{(H1)} with $m\geq 1$, then any viscosity solution $u$ of 
\eqref{ETmain-2} is $\gamma$-H\"{o}lder in $B_1$, for any $\gamma<\frac{sp-m+1}{p-m-1}$, and $\norm{u}_{C^{0, \gamma}(\bar{B}_1)}\leq C$,
where $C$ depends on $A$, $\theta$, $s$, $p$, $N$, $m$, $\gamma$ and $\|f\|_{C^{0,1}(B_2)}$.
\end{rem}

\begin{proof}[Proof of Theorem~\ref{Tmain-2}]
We break the proof of Theorem~\ref{Tmain-2} in several cases. In the proof below, we impose the blanket assumption $\frac{sp+1}{p-1}>1$.
\medskip

\noindent{\bf Case 1.} ($sp<p-1$ and $m>1$).
Since $sp<p-1<m$ is covered by Theorem~\ref{Tmain-1}, we only consider the case $1\leq m\leq p-1$. In fact, the proof below works for
under the conditions $1<m\leq p-1$ and $\frac{sp+1}{p-1}>1$.
From Theorem~\ref{Tmain-1} and Proposition~\ref{L3.6} we see that $u\in C^{0, \upkappa}_{\rm loc}(B_2)$ for any $\upkappa<1$. 
We set $\upkappa$ close to $1$ and $m_2$ large enough such that $\upkappa\frac{m_2-1}{m_2} + sp -(p-1) >0$, and $\frac{sp}{p-1} > 1-\upkappa$.

In \eqref{E2.2} we take $\varphi=\tilde\varphi(\frac{r_\circ}{3}\cdot)$, the Lipschitz profile function in \eqref{varphi}. As before, we would like to show that
$\Phi\leq 0$ in $B_2\times B_2$ for some large $L$ dependent on $H, n, s, p$ and $A$. We argue with contradiction, assuming \eqref{E2.3}, and 
arrive at \eqref{E2.7}. Choosing $\varepsilon_1$ suitably small, we get that
$$ I_1+I_2\geq C L^{p-1} |\abar|^{p-1-sp} (\log^2(|\abar|))^{-\upzeta}$$
for all $L\geq L_0$, where $\upzeta=\frac{n+1}{2}+p-sp$. Now using Lemma~\ref{L3.4}, ~\ref{L3.5} and the estimate \eqref{Hestim} with the Lipschitz profile function we obtain from \eqref{E2.7} that
$$L^{p-1} |\abar|^{p-1-sp} (\log^2(|\abar|))^{-\upzeta}
\leq C \left[|\abar|^{p-1-sp+\epsilon_\circ} + |\abar|^{\upkappa}+ |\abar|^{\upkappa(p-1)} + |\abar| + L^{m-1} |\bar a|^{\upkappa \frac{m_2-1}{m_2}} \right]
$$ 
for all $L\geq L_0$. This implies
\begin{align*}
 1&\leq C L^{1-p}\left[|\abar|^{\epsilon_\circ} + |\abar|^{\upkappa+sp-(p-1)} + 
|\abar|^{(\upkappa-1)(p-1)+sp} + |\abar|^{sp-(p-2)}\right] (\log^2(|\abar|))^{\upzeta}
\\
&\quad + L^{m-p} |\bar a|^{\upkappa \frac{m_2-1}{m_2}+sp-(p-1)} (\log^2(|\abar|))^{\upzeta}.
\end{align*}
Since $|\abar|\to 0$ as $L\to\infty$ by \eqref{AB03}, the above cannot hold for large enough $L$. This contradiction leads to
$\Phi\leq 0$ in $B_2\times B_2$ for some large $L$, proving Lipschitz continuity of $u$ in $B_1$.

\medskip

\noindent{\bf Case 2.} ($sp= p-1$ and  $1<m$).
Since $1<m\leq p-1$ is covered by the proof in Case 1, we assume $m>p-1$.
From Theorem~\ref{Tmain-1}, $u$ is locally $\upkappa-$H\"older for any $\upkappa<1$.  
As is done in Case 1, we take $\varphi=\tilde\varphi(\frac{r_\circ}{3}\cdot)$, the Lipschitz profile function in \eqref{E2.2}. As before, we would like to show that
$\Phi\leq 0$ in $B_2\times B_2$ for some large $L$ dependent on $H, n, s, p$ and $A$. We argue with contradiction, assuming \eqref{E2.3}, and 
arrive at \eqref{E2.7}.

From \eqref{AB03} we have, 
$$L|\bar a| \leq \norm{u}_{C^{0, \tilde\upkappa}(\bar{B}_{\varrho_2})}|\bar a|^{\tilde\upkappa} \Rightarrow L|\bar a|^{1-\tilde\upkappa} \leq C,$$
where $C$ depends on $A$ and $\tilde\upkappa$.
Therefore, $L|\bar a|^\varepsilon \leq C_{\varepsilon}$ for all $\varepsilon \in (0,1)$. 
Fix $\upkappa\in (0, 1)$ and $\varepsilon = \frac{\upkappa(m_2 -1)}{2(m-1)m_2}$.
From Lemma~\ref{L3.5}(ii), we get
$$I_3 \geq -C_{\varepsilon_1}|\bar a|^{ \epsilon_\circ},$$
and by Lemmas~\ref{L3.1} and ~\ref{L3.2}, we have
$$ I_1+I_2\geq C L^{p-1}  (\log^2(|\abar|))^{-\upzeta}$$
for all $L\geq L_0$, provided we set $\varepsilon_1$ suitably small. Inserting the estimates in \eqref{E2.7} and using \eqref{Hestim}
we arrive at
\begin{align*}
L^{p-1}  (\log^2|\bar a|)^{-\upzeta} &\leq C \max\{|\abar|^{\upkappa},  |\bar a|^{\upkappa(p-1)}, |\bar a|^{ \epsilon_0}, |\abar|\} +
C L^{m-1} |\bar a|^{\upkappa \frac{m_2-1}{m_2}}
\\
& = C \max\{|\abar|^{\upkappa},  |\bar a|^{\upkappa(p-1)}, |\bar a|^{ \epsilon_0}, |\abar|\} +
C L^{m-1} |\bar a|^{2\varepsilon (m-1)}
\\
&\leq C \max\{|\abar|^{\upkappa},  |\bar a|^{\upkappa(p-1)}, |\bar a|^{ \epsilon_0}, |\abar|\} +
(C_\varepsilon)^{m-1}  |\bar a|^{\varepsilon (m-1)}
\end{align*}
for all $L\geq L_0$. Taking the logarithm on the other side leads to
$$ L^{p-1}\leq C \max\{|\abar|^{\upkappa},  |\bar a|^{\upkappa(p-1)}, |\bar a|^{ \epsilon_0}, |\abar|\}(\log^2|\bar a|)^{\upzeta} +
C (C_\varepsilon)^{m-1}  |\bar a|^{\varepsilon (m-1)}(\log^2|\bar a|)^{\upzeta},$$
which cannot hold for large enough $L$ (or equivalently, $|\abar|$ small), giving us a contradiction. Now the proof can be completed as in Case 1.

\medskip

\noindent{\bf Case 3}. ($sp>p-1$ and $1<m\leq sp$).
 Since $1<m\leq p-1$ is covered by Case 1, we assume $p-1<m\leq sp$.
Proof in this case is almost same as in Case 2. By Proposition~\ref{L3.6} we know that $u\in C^{0, \upkappa}_{\rm loc}(B_2)$ for all $\upkappa\in (0, 1)$.
We choose $\varphi$ to be the Lipschitz profile function in \eqref{E2.2}. Fix $\upkappa$ close to 1 such that $sp-(p-1) - (m-1)(1-\upkappa) >0$.
From \eqref{AB03} we also have $L |\abar|^{1-\upkappa}\leq C_\upkappa$ for some $C_\upkappa$ dependent on $A, \varrho_2$ and $\upkappa$.
Now first choosing $\varepsilon_1$ small, and then applying Lemmas~\ref{L3.1}-\ref{L3.5} together with \eqref{Hestim} we obtain from \eqref{E2.7} that
\begin{align*}
L^{p-1}  |\abar|^{(p-1)-sp}(\log^2|\bar a|)^{-\upzeta} &\leq C \max\{|\abar|^{\upkappa},  |\bar a|^{\upkappa(p-1)}, |\abar|\} +
C_{\varepsilon_1}|\abar|^{(p-1)-sp+\epsilon_\circ}
 +C L^{m-1} |\bar a|^{\upkappa \frac{m_2-1}{m_2}}
\\
& \leq C \max\{|\abar|^{\upkappa},  |\bar a|^{\upkappa(p-1)}, |\abar|\} +
C_{\varepsilon_1}|\abar|^{(p-1)-sp+\epsilon_\circ} 
\\
&\qquad + C (L|\abar|^{1-\upkappa})^{m-1} |\bar a|^{-(m-1)(1-\upkappa)}
\\
&\leq C \max\{|\abar|^{\upkappa},  |\bar a|^{\upkappa(p-1)}, |\abar|\} +
C_{\varepsilon_1}|\abar|^{(p-1)-sp+\epsilon_\circ} 
 + C C_\upkappa^{m-1}  |\bar a|^{-(m-1)(1-\upkappa)},
\end{align*}
which implies,
\begin{align*}
L^{p-1} & \leq C \max\{|\abar|^{\upkappa},  |\bar a|^{\upkappa(p-1)}, |\abar|\}|\abar|^{sp-(p-1)}(\log^2|\bar a|)^{\upzeta}
\\
&\quad + C_{\varepsilon_1}|\abar|^{\epsilon_\circ}(\log^2|\bar a|)^{\upzeta} + C^{m-1}  |\bar a|^{sp-(p-1)-(m-1)(1-\upkappa)}(\log^2|\bar a|)^{\upzeta}
\end{align*}
for all $L\geq L_0$ and $L_0$ is fixed suitably large from Lemmas ~\ref{L3.1}-\ref{L3.5}. As before, we get a contradiction for large $L$ which completes the proof in this case.

\medskip

\noindent{\bf Case 4.} ($sp>p-1$ and $sp<m$).
From Theorem~\ref{Tmain-1} we know that $u$ is locally $\frac{m-sp}{m-(p-1)}$-H\"older. Fix any $\gamma < \min \{ 1, \upkappa + \frac{1}{2(p-1)} \}$ where $\upkappa\in [\frac{m-sp}{m-(p-1)}, 1)$. Then, for $\varphi=\varphi_\gamma$ in \eqref{E2.2}, we have from \eqref{AB03} that
\begin{align*}
L |\bar a|^{\gamma} \leq C|\bar a|^{\upkappa}
\Rightarrow L |\bar a|^{1-\upkappa} \leq C |\bar a|^{1-\gamma}
\Rightarrow L^{m-(p-1)} |\bar a|^{sp-(p-1)} \leq C |\bar a|^{(1-\gamma)(m-(p-1))},
\end{align*}
using the fact $1-\upkappa\leq 1-\frac{m-sp}{m-(p-1)}=\frac{sp-(p-1)}{m-(p-1)}$.
Now proceeding as before with the help of \eqref{E2.7} and the Lemmas~\ref{L3.1}--\ref{L3.5} and \eqref{Hestim} we arrive at
$$ L^{p-1} |\bar a| ^{\gamma(p-1) -sp} -C [|\abar|^{\gamma(p-1) - sp } + |\bar a|^{\upkappa}+ |\bar a|^{\upkappa(p-1)}+|\abar|] \leq C L^{m-1} 
|\abar|^{(\gamma-1)(m-1)+\upkappa\frac{m_2-1}{m_2}}.$$
This implies
\begin{align*}
(1   - \frac{C}{L^{p-1}}) & \leq  \frac{1}{L^{p-1}}|\bar a| ^{sp-\gamma(p-1)} \max\{ |\bar a|^{\upkappa}, |\bar a|^{\upkappa(p-1)}, |\bar a|\}
+ \frac{C}{L} L^{m-(p-1)}|\bar a|^{sp-\gamma (p-1)} |\abar|^{(\gamma-1)(m-1)}
\\
 &\leq  \frac{1}{L^{p-1}}|\bar a| ^{sp-\gamma(p-1)} \max\{ |\bar a|^{\upkappa}, |\bar a|^{\upkappa(p-1)}, |\bar a|\}
+ \frac{C}{L} |\bar a|^{(1-\gamma)(m-(p-1))} |\abar|^{(\gamma-1)((m-1)-(p-1))}
\\
&\leq \frac{1}{L^{p-1}}|\bar a| ^{sp-\gamma(p-1)} \max\{ |\bar a|^{\upkappa}, |\bar a|^{\upkappa(p-1)}, |\bar a|\}
+ \frac{C}{L} |\bar a|^{(1-\gamma)}
\end{align*}
for all large $L$. But this cannot hold for large $L$, giving us $\gamma$-H\"older continuity.
Therefore, we can apply bootstrapping method as in Proposition~\ref{L3.6} to conclude that $u$ is locally $\upkappa-$H\"older 
continuous for any $\upkappa<1$.

Now, to prove the Lipschitz regularity, we fix $\upkappa<1$ such that $1-\upkappa \leq \frac{sp-(p-1)}{2(m-1)}$. From \eqref{AB03},
with $\varphi$ being the Lipschitz profile, we have $L|\bar a| \leq |\bar a|^{\upkappa} \Rightarrow L|\bar a|^{1-\upkappa} \leq C_\upkappa$. 
As is done in Case 3, \eqref{E2.7} would give us
\begin{align*}
L^{p-1}  |\abar|^{(p-1)-sp}(\log^2|\bar a|)^{-\upzeta} \leq C \max\{|\abar|^{\upkappa},  |\bar a|^{\upkappa(p-1)}, |\abar|\} +
C_{\varepsilon_1}|\abar|^{(p-1)-sp+\epsilon_\circ}
 +C L^{m-1} |\bar a|^{\upkappa \frac{m_2-1}{m_2}},
\end{align*}
implying
\begin{align*}
L^{p-1} (\log^2|\bar a|)^{-\upzeta} &\leq C \max\{|\abar|^{\upkappa},  |\bar a|^{\upkappa(p-1)}, |\abar|\} |\abar|^{sp-(p-1)} +
C_{\varepsilon_1}|\abar|^{\epsilon_\circ}
 +C (L|\abar|^{1-\upkappa})^{m-1} |\bar a|^{\frac{sp-(p-1)}{2}}
 \\
&\leq C \max\{|\abar|^{\upkappa},  |\bar a|^{\upkappa(p-1)}, |\abar|\} |\abar|^{sp-(p-1)} +
C_{\varepsilon_1}|\abar|^{\epsilon_\circ}
 + C C_\upkappa^{m-1} |\bar a|^{\frac{sp-(p-1)}{2}},
\end{align*}
which in turn, gives us
\begin{align*}
L^{p-1} 
&\leq C \max\{|\abar|^{\upkappa},  |\bar a|^{\upkappa(p-1)}, |\abar|\} |\abar|^{sp-(p-1)} (\log^2|\bar a|)^{\upzeta}
+ C_{\varepsilon_1}|\abar|^{\epsilon_\circ} (\log^2|\bar a|)^{\upzeta}
\\
&\qquad + C |\bar a|^{\frac{sp-(p-1)}{2}}(\log^2|\bar a|)^{\upzeta}
\end{align*}
for all $L$ large. But this is again a contradiction, giving us local Lipschitz regularity as before.
\end{proof}

\begin{rem}
From the proofs of Proposition~\ref{L3.6} and Theorem~\ref{Tmain-2}, it follows that $u\in C^{0,1}(\bar{B}_1)$ for $f\in C(\bar{B}_2)$, provided
$sp>p-1$ and $m>1$.
\end{rem}

\section{Proof of Theorem~\ref{Tmain-3}}\label{s-Tmain-3}
In this section, we prove the Liouville theorem stated in Theorem~\ref{Tmain-3}. 
First, by Theorems~\ref{Tmain-1} and~\ref{Tmain-2}, together with Remark~\ref{R3.1}, we note that any solution of
\begin{equation}\label{E4.1}
(-\Delta_p)^s u + H(\grad u) = 0 
\quad \text{in } \mathbb{R}^n
\end{equation}
is locally $\upkappa_0$-H\"older continuous for some $\upkappa_0 \in (0,1]$. 
Moreover, if $u$ is bounded, then $A$ is bounded by
$C\norm{u}_\infty$,  implying $u$ is globally $\upkappa_0$-H\"older continuous, that is,
\[
|u(x) - u(y)| \le C |x-y|^{\upkappa_0}
\quad \text{for all } x,y \in \mathbb{R}^n.
\]

We now outline the strategy of the proof which also uses an Ishii-Lions type argument as in Section~\ref{s-Tmain-2}, but in a slightly different way.
Let $\sup_{\Rn}|u(x)| = M$. For $R>1$, we define 
$$\eta(R) =\frac{1}{R^\beta},$$
 where $\beta \in (0,1) $ to be fixed later.
Let $\psi$ be a smooth cutoff function satisfying $\psi(x) =0$ for $|x| \leq \frac{1}{4}$, $\psi(x) = 2M$ for $|x| \geq \frac{1}{2}$ and $\psi(x) \in [0,2M]$ for $\frac{1}{4} <|x| <\frac{1}{2}$. We choose $0<\gamma < \min \{ 1, \upkappa_0, \frac{sp}{p-1} \}$
 and also let $\beta < \gamma$. Since $u$ is bounded and $\upkappa_0$-H\"older, it is also globally $\gamma$-H\"older. 
 
Define the doubling function:
$$\Phi_R(x,y) =\Phi(x,y):= u(x) - u(y) -\eta(R) |x-y|^{\gamma} -\psi({x}/{R}).$$
Our goal is to show that there exists $R_0 >0$ such that for all $R \geq R_0$, we have
 \begin{equation}\label{E4.2}
\sup_{(x, y)\in \bar{B}_R\times \bar{B}_R} \Phi_R(x,y) \leq 0.
 \end{equation}
Once we have \eqref{E4.2}, the Liouville property follows. More precisely, for any two given points $x, y\in\Rn$, we can find 
$R$ so that $x, y\in B_{\frac{R}{2}}$ for all large $R$. Then \eqref{E4.2} gives us $|u(x)-u(y)|\leq \eta(R) |x-y|^\gamma$. Since
$\eta(R)\to 0$ as $R\to\infty$, letting $R\to\infty$, we obtain $u(x)=u(y)$. Thus, $u$ must be constant.

As done in Section~\ref{s-Tmain-2}, to prove \eqref{E4.2}, we argue by contradiction.  Thus, we start by assuming that for some large $R$
\begin{equation}\label{maxi}
\Phi(\bar{x},\bar{y})=\max_{(x, y)\in \bar{B}_R\times \bar{B}_R} \Phi(x, y)>0,
\end{equation}
where $\bar{x},\bar{y}\in \bar{B}_R$. By the definition of $\psi$ we see that $\Phi\leq 0$ for $|x|\ge R/2$
, giving us $\bar{x}\in B_{R/2}$. Since 
$$\eta(R) {R}^\gamma \rightarrow \infty \quad \text{as} \quad R \rightarrow \infty,
\quad \text{and}\quad \Phi(\bar{x},\bar{y})>0\Rightarrow \eta|\bar{x}-\bar{y}|^\gamma\leq u(\xbar)-u(\ybar),$$  
it follows that
$|\bar{x}-\bar{y}|\leq \frac{R}{4}$ for all large $R$. We set $\bar{a}=\bar{x}-\bar{y}$. Since $\Phi(\bar{x},\bar{y})>0$, it follows that $\bar{a}\neq 0$.
For simplicity, we would write $\eta(R)$ as $\eta$ in the calculations below. We denote by
\begin{align*}
\phi(x, y)&:=\eta |x-y|^\gamma + \psi(x/R)
\\
 \bar{p} &:=\nabla_x\phi(\bar{x},\bar{y}) = \eta \gamma |\bar{a}|^{\gamma-2} \bar{a}+ R^{-1} \nabla\psi(\bar{x}/R), \nonumber
\\
\bar{q}&:=-\nabla_y\phi(\bar{x},\bar{y})=\eta\gamma |\bar{a}|^{\gamma-2}\bar{a}.\nonumber
\end{align*}
Also, for $|z|\leq R/4$, we have
\begin{align*}
\Phi(\bar x, \bar y) \geq \Phi(\bar x + z , \bar y) 
\quad \Rightarrow\quad  u(\bar x + z) \leq \phi(\bar x + z, \bar y) + u(\bar x) - \phi(\bar x, \bar y),
\end{align*}
and 
\begin{align*}
\Phi(\bar x, \bar y) \geq \Phi(\bar x  , \bar y + z)
\quad \Rightarrow\quad  u(\bar y + z) \geq -\phi(\bar x , \bar y + z) + u(\bar y) + \phi(\bar x, \bar y).
\end{align*}
Thus
$$x \mapsto u(x) - \phi(x, \bar y) \text{ has a local maximum point at }\bar x , $$
and
$$y \mapsto u(y) - \phi(\bar x,  y) \text{ has a local minimum point at }\bar y  .$$
For some $\delta<\frac{R}{4}$, to be chosen later, we define the following test functions:
 \[
w_1(z)=\left\{\begin{array}{ll}
\phi(z, \ybar) + \kappa_{\xbar} & \text{if}\; z\in B_\delta(\bar x),
\\[2mm]
u(z) & \text{otherwise},
\end{array}
\right.
\quad \text{and}\quad
w_2(z)=\left\{\begin{array}{ll}
-\phi(\xbar, z)+ \kappa_{\ybar} & \text{if}\; z\in B_\delta(\bar y),
\\[2mm]
u(z) & \text{otherwise},
\end{array}
\right.
\]
with $\kappa_{\xbar}=u(\xbar)- \phi(\xbar, \ybar)$, and $\kappa_{\ybar}= u(\ybar) + \phi(\xbar, \ybar)$.
Applying the definition of viscosity solutions to \eqref{E4.1} we get 
$$(-\Delta_p)^{s}w_1(\bar x) + 
H(\nabla w_1 (\bar x)) \leq 0 \text{\quad and \quad } 
(-\Delta_p)^{s}w_2(\bar y) + H(\nabla w_2 (\bar y)) \geq 0.$$
Subtracting the two viscosity inequalities, we obtain
\begin{equation}\label{viscineq2}
(-\Delta_p)^{s}w_1(\bar x) - (-\Delta_p)^{s}w_2(\bar y) \leq  \theta|\nabla w_2 (\bar y)|^{m} - \theta|\nabla w_1 (\bar x)|^{m}.
\end{equation}
As was done in Section~\ref{s-Tmain-2},
we consider the following  domains.
$$\cone=\{z\in B_{\delta_0|\abar|}\; :\; |\langle \abar, z\rangle|\geq (1-\eta_0) |\abar||z|\},
\quad \cD_1 = B_{\delta} \cap \cone^c, \quad \text{and}\quad \cD_2=B_{\frac{R}{4}}\setminus (\cD_1\cup\cone),$$
where $\delta_0=\eta_0\in (0, \frac{1}{2})$ would be chosen later,  and,  $\delta = \varepsilon_1 |\abar| \ll \delta_0 |\abar|$.
Also, recall the notation $\mathcal{I}$ from \eqref{E3.6} and write 
\eqref{viscineq2} as
\begin{align}\label{E4.5}
&\underbrace{\cI[\cone] w_1(\bar x)-\cI[\cone] w_2(\bar y)}_{=I_1} + \underbrace{\cI[\cD_1] w_1(\bar x)-\cI[\cD_1] w_2(\bar y)}_{=I_2} + \underbrace{\cI[\cD_2] w_1(\bar x)-\cI[\cD_2] w_2(\bar y)}_{=I_3}  \nonumber
\\ 
&+ \underbrace{\cI[B^c_{\frac{R}{4}}] w_1(\bar x)-\cI[ B^c_{\frac{R}{4}}] w_2(\bar y)}_{=I_4}  \leq  \underbrace{H(\nabla w_2 (\bar y)) - 
H(\nabla w_2 (\bar x))}_{\mathcal{H}}.
\end{align}
Our main goal is to estimate the terms $I_i, i=1,2,3,4$ and $\mathcal{H}$ suitably so that we get a contradiction from \eqref{E4.5}
 for all $R$ large.

We denote $\triangle^2 f(x,z) =f(x) - f(x+z) +\nabla f(x)\cdot z$. Then the following estimate will be useful in our calculations below.
There exist $\delta_0=\eta_0\in (0, \frac{1}{2})$, so that
\begin{align}
\frac{1}{C} \eta(R) |\abar|^{\gamma-2}|z|^2 &\leq \triangle^2\phi (\cdot, \ybar)(\xbar, z), \triangle^2\phi(\xbar, \cdot)(\ybar, z)  \leq C \eta(R) 
|\abar|^{\gamma-2}|z|^2\quad \text{for all} \; z\in \cone, \label{E4.6}
\\
-C \eta(R) |\abar|^{\gamma-2}|z|^2 &\leq \triangle^2\phi (\cdot, \ybar)(\xbar, z), \triangle^2\phi(\xbar, \cdot)(\ybar, z)\leq C \eta(R) 
|\abar|^{\gamma-2}|z|^2
\quad \text{for all}\; |z|\leq \varepsilon_1|\abar|,\label{E4.7}
\end{align}
for all $R\geq R_0$ and $\varepsilon_1\in (0, \frac{1}{2})$, where $R_0$ depends on $\delta_0, \psi$.
A proof of \eqref{E4.6} can be found in \cite[Lemma~2.2]{BT25} and see the proof of \cite[Lemma~3.2]{BT25} for the estimate \eqref{E4.7}.
We also use the fact 
$$|\abar|^{1-\gamma}[\eta(R)R]^{-1}\leq R^{\beta-\gamma}\to 0$$
as $R\to\infty$, for the estimates above. 

\medskip

\noindent\underline{Estimation of $I_1$.} We denote by $\mathfrak{p}(z)=-\grad_x\phi(\xbar,\ybar)\cdot z$. By the anti-symmetry
and linearity of $\mathfrak{p}$
it follows that $\mathcal{I}[D]\mathfrak{p}(x)=0$ for all $x$, provided $D$ is symmetric about $0$.
Also, since $\eta(R)=R^{-\beta}$, implying
$$|\abar|^{1-\gamma}R^{\beta-1}\leq \frac{1}{4^{1-\gamma}} R^{\beta-\gamma}\to 0\quad \text{as}\quad R\to\infty,$$
we have
\begin{equation}\label{E4.8}
\frac{1}{C} \eta |\abar|^{1-\gamma}|z|\leq |\mathfrak{p}(z)|\leq C \eta |\abar|^{1-\gamma}|z|
\end{equation}
for all $R\geq R_0$, where $R_0$ depends on $\gamma, \psi$.
Now we compute, for $p\geq 2$,
\begin{align*}
\mathcal{I} [\cone] w_1 (\bar x) &= \cI [\cone] w_1 (\bar x) - \cI [\cone] \mathfrak{p} (\bar x)
\\
&\geq \cI [\cone] \phi(\cdot , \bar y)(\bar x)) - \cI [\cone] \mathfrak{p} (\bar x)
\\
&=(p-1)\int_{\cone} \int_0^1 |\mathfrak{p}(z) + t\triangle^2\phi(\cdot , \bar y) (\bar x, z)|^{p-2} \triangle^2\phi(\cdot , \bar y) (\bar x, z) \frac{\dz}{|z|^{n+sp}}
\\
&\geq C\int_{\cone}  |\mathfrak{p}(z)|^{p-2} \eta |\bar a|^{\gamma-2} |z|^2 \frac{\dz}{|z|^{n+sp}}
\\
&\geq C\int_{\cone}  |\eta |\bar a|^{\gamma-1} |z||^{p-2} \eta |\bar a|^{\gamma-2} |z|^2 \frac{\dz}{|z|^{n+sp}}
\\
&= C \eta^{p-1} |\bar a|^{\gamma(p-1) -p}\int_{\cone}   |z|^p \frac{\dz}{|z|^{n+sp}}
\\
&=C \eta^{p-1} |\bar a|^{\gamma(p-1) -p}\int_{\cone}   |z|^p \frac{\dz}{|z|^{n+sp}}
\\
&=C \eta^{p-1} |\bar a|^{\gamma(p-1) -p} |\eta_0|^{\frac{n-1}{2}} (\delta_0 |\bar a|)^{p-sp}
\\
&=C \eta^{p-1} |\bar a|^{\gamma(p-1) -sp},
\end{align*}
where in the second line we use the monotonicity property of $J_p$, in the forth line we use \eqref{E4.6} and inequality from \cite[Lemma~3.3]{KKL19}, in the fifth line we use \eqref{E4.8} and
in the eighth line we compute the integral from \cite[Example 1]{BCCI12} .

When $p \in (1,2)$,  since $p-2<0$, we use the upper bound given by 
\begin{equation}\label{E4.9}
|\mathfrak{p}(z) + t \triangle^2 \phi(\cdot, \bar y)(\bar x, z)| \leq C\eta |\bar a|^{\gamma-1}|z| + C\eta |\bar a|^{\gamma-2} |z|^2 \leq C \eta
 |\bar a|^{\gamma-1}|z|
 \end{equation}
for large $R$ and $|z|\leq \delta_0|\abar|$. Therefore, a similar estimate also holds for $p\in (1, 2)$.

An upper bound for $\cI[\cone] w_2(\bar y)$ is obtained similarly, namely,
$$\cI[\cone]w_2(\bar y) \leq - C \eta^{p-1} |\bar a|^{\gamma(p-1) -sp}$$
for all $R\geq R_0$.
Therefore, 
\begin{equation}\label{I1}
I_1 \geq C \eta^{p-1} |\bar a|^{\gamma(p-1) -sp}
\end{equation}
for all $R\geq R_0$, where $R_0$ depends on $\gamma, \beta, \delta_0$ and $\psi$.

\medskip

\noindent\underline{Estimation of $I_2$.}
First we suppose $p\geq 2$. Then, as $\delta=\varepsilon_1|\abar|$, we get
\begin{align*}
\left|\cI [\cD_1] w_1 (\bar x)\right| &= \left| \cI [\cD_1] w_1 (\bar x) - \cI [\cD_1] \mathfrak{p} (\bar x) \right|
\\
&=\left| (p-1)\int_{\cD_1} \int_0^1 |\mathfrak{p}(z) + t \triangle^2\phi(\cdot , \bar y) (\bar x, z)|^{p-2} \triangle^2\phi(\cdot , \bar y) (\bar x, z) \frac{\dz}{|z|^{n+sp}}\right|
\\
&\leq C \int_{\cD_1}  |\eta |\bar a|^{\gamma-1} |z||^{p-2} \delta |\bar a|^{\gamma-2} |z|^2 \frac{\dz}{|z|^{n+sp}}
\\
&=C \varepsilon_1^{p-sp}\eta^{p-1} |\bar a|^{\gamma(p-1) -sp},
\end{align*}
where in the third line we use \eqref{E4.9}.

Now let $p\in (1,2)$. In view of \eqref{E4.8} and \eqref{E4.7}, we can choose $\varepsilon_1\in (0, \frac{1}{2})$ small so that
$$ |\mathfrak{p}(z)| + |\triangle^2\phi(\cdot , \bar y) (\bar x, z)|\leq \kappa |\mathfrak{p}(z)|$$
for some constant $\kappa$, independent of $R$. Hence
\begin{align*}
\left|\cI [\cD_1] w_1 (\bar x)\right| &= \left| \cI [\cD_1] w_1 (\bar x) - \cI [\cD_1] \mathfrak{p} (\bar x) \right|
\\
&=\left| (p-1)\int_{\cD_1} \int_0^1 |\mathfrak{p}(z) + t\triangle^2\phi(\cdot , \bar y) (\bar x, z)|^{p-2} \triangle^2\phi(\cdot , \bar y) (\bar x, z) \frac{\dz}{|z|^{n+sp}}\right|
\\
&\leq C\int_{\cone}  |\mathfrak{p}(z)|^{p-2} \eta |\bar a|^{\gamma-2} |z|^2 \frac{\dz}{|z|^{n+sp}}
\\
&\leq C\int_{\cone}  |\eta |\bar a|^{\gamma-1} |z||^{p-2} \delta |\bar a|^{\gamma-2} |z|^2 \frac{\dz}{|z|^{n+sp}}
\\
&=C \varepsilon_1^{p-sp}\eta^{p-1} |\bar a|^{\gamma(p-1) -sp},
\end{align*}
provided we choose $\varepsilon_1$ small enough.
We get a similar estimate for $\cI[\cD_1] w_2(\bar y) $ and combining them we have 
\begin{equation}\label{I2}
I_2 \geq -C \varepsilon_1^{p-sp}\eta^{p-1} |\bar a|^{\gamma(p-1) -sp}
\end{equation}
for all $R\geq R_0$, where $R_0$ is chosen from \eqref{E4.8}. In view of \eqref{I1} and \eqref{I2}, we can set $\varepsilon_1\in (0,\frac{1}{2})$ small enough so that for some $R_0$ we have
\begin{equation}\label{I1+I2}
I_1 + I_2 \geq C \eta^{p-1} |\bar a|^{\gamma(p-1) -sp},
\end{equation}
for all $R\geq R_0$.

\medskip

\noindent\underline{Estimation of $I_3$.} We fix the above choice of $\varepsilon_1$, giving us \eqref{I1+I2}. First we suppose $p>2$.
Denote $\triangle f(x,z) = f(x)-f(x+z)$. Since for all $|z|\leq R/4$, we have from \eqref{maxi} that
$$
\Phi (\bar x+z, \bar y+z) \leq \Phi(\bar x, \bar y) 
\quad \Rightarrow\quad \triangle u(\bar x, z) - \triangle u (\bar y, z) \geq \triangle \psi\left(\frac{\bar x}{R}, z\right).$$
 Again, $u$ is globally $\gamma$-H\"older continuous. Thus
\begin{align}\label{E4.13}
&\cI[\cD_2] w_1(\bar x)-\cI[\cD_2] w_2(\bar y) \nonumber
\\
&= \int_{\cD_2} \left[ J_p(u(\bar x) -u(\bar x+z)) -J_p((u(\bar y ) -u(\bar y +z)) \right] \frac{\dz}{|z|^{n+sp}}\nonumber
\\
&=\int_{\cD_2} (p-1)\int_0^1 \left| (\triangle u(\bar y, z)) + t \left( \triangle u(\bar x, z) -\triangle u(\bar y, z) \right) \right|^{p-2} \left( \triangle u(\bar x, z) -\triangle u(\bar y, z) \right) \frac{\dz}{|z|^{n+sp}}\nonumber
\\
&\geq -C \int_{\cD_2}  \left| z \right|^{\gamma(p-2)} \left|\triangle \psi\left(\frac{\bar x}{R}, z\right)\right| \frac{\dz}{|z|^{n+sp}}\nonumber
\\
&\geq -C\norm{\grad\psi}_\infty \int_{\cD_2}  \left| z \right|^{\gamma(p-2)} \dfrac{1}{R} |z| \frac{\dz}{|z|^{n+sp}}\nonumber
\\
&= -\dfrac{C}{R} R^{\gamma(p-2) +1 -sp-n} \int_{\varepsilon_1 |\bar a|  \leq |z| \leq \frac{R}{4}} \left[\frac{|z|}{R} \right]^{\gamma(p-2) +1 -sp-n} \dz
\nonumber
\\
&\geq -\dfrac{C}{R} R^{\gamma(p-2) +1 -sp-n} \int_{\varepsilon_1 |\bar a|  \leq |z| \leq \frac{R}{4}} \left[\frac{|z|}{R} \right]^{\gamma(p-1)  -sp-n} \dz
\nonumber
\\
&\geq -\frac{C}{R^{\gamma}} \int_{\varepsilon_1 |\bar a|  \leq |z| \leq \frac{R}{4}} |z| ^{\gamma(p-1) -sp-n} \dz \nonumber
\\
&\geq -\frac{C}{R^\gamma} (\varepsilon_1 |\bar a|)^{\gamma(p-1)-sp}.
\end{align}

Now suppose $p\in (1,2]$ . We use the following algebraic inequality
$$|J_p(a) - J_p(b)| \leq 2|a-b|^{p-1}$$
for $a, b\in\R$. Then,
\begin{align}\label{E4.14}
 \cI[\cD_2] w_1(\bar x)-\cI[\cD_2] w_2(\bar y) 
 &\geq  \int_{\cD_2} \left[ J_p(\triangle u(\bar y, z) + \triangle \psi(\xbar, z)) -J_p(\triangle u(\bar y, z ) \right] \frac{\dz}{|z|^{n+sp}}\nonumber
 \\
 &\geq -2\int_{\cD_2} \left|\triangle \psi\left(\frac{\bar x}{R}, z\right)\right|^{p-1} \frac{\dz}{|z|^{n+sp}}\nonumber
 \\
 &\geq -C\norm{\grad\psi}_\infty \int_{\varepsilon_1 |\bar a|  \leq |z| \leq \frac{R}{4}} \dfrac{1}{R} |z|^{p-1} \frac{\dz}{|z|^{n+sp}}\nonumber
  \\
 &\geq -\dfrac{C}{R} R^{(p-1) - n - sp} \int_{\varepsilon_1 |\bar a|  \leq |z| \leq \frac{R}{4}}  \left[\frac{|z|}{R}\right]^{p-1 - n - sp} \dz\nonumber
   \\
 &\geq -\dfrac{C}{R} R^{(p-1) - n - sp} \int_{\varepsilon_1 |\bar a|  \leq |z| \leq \frac{R}{4}} \left[\frac{|z|}{R}\right]^{\gamma(p-1) - n - sp} \dz\nonumber
 \\
  &\geq -\frac{C}{R^{1-(1-\gamma)(p-1)}}  |\bar a|^{\gamma(p-1) - sp}\geq -\frac{C}{R^{\gamma}}  |\bar a|^{\gamma(p-1) - sp},
\end{align}
using $1-(1-\gamma)(p-1)>\gamma$, where $C$ depends on $\varepsilon_1,n,s, p, \gamma$ and $\psi$.
Now, using \eqref{E4.13} and \eqref{E4.14}, we choose $\beta$ small enough to ensure that $\gamma- \beta(p-1)>0$. In view of \eqref{I1+I2} and this choice of $\beta$,
we can find $R_0$ large enough so that
\begin{equation}\label{I123}
I_1 + I_2 +I_3 \geq C \eta^{p-1} |\bar a|^{\gamma(p-1) -sp}
\end{equation}
for all $R\geq R_0$.

\medskip

\noindent\underline{Estimation for $I_4$.}
Now we compute a lower bound for $I_4$ as
\begin{align*}
I_4&\cI[B_{\frac{R}{4}}^c] w_1(\bar x)-\cI[B_{\frac{R}{4}}^c] w_2(\bar y) 
\\
&= \int_{B_{\frac{R}{4}}^c} \left[ J_p(u(\bar x) -u(\bar x+z)) -J_p((u(\bar y ) -u(\bar y +z)) \right] \frac{\dz}{|z|^{n+sp}}
\\
&\geq -(4\norm{u}_\infty)^{p-1} \int_{B_{\frac{R}{4}}^c} \frac{\dz}{|z|^{n+sp}} = -\frac{C}{R^{sp}}.
\end{align*}
From our choice, $\gamma < \frac{sp}{p-1} \Leftrightarrow sp-\gamma(p-1) > 0$, and $\eta(R) |\bar a|^{\gamma} \leq 2\norm{u}_\infty \Rightarrow |\bar a| \leq C R^{\frac{\beta}{\gamma}}$. Hence, for any constant $\kappa$
\begin{align*}
\eta^{p-1}|a|^{\gamma(p-1) -sp} \geq \frac{\kappa}{R^{sp}}
\Leftarrow \frac{1}{\kappa} \geq R^{-sp + \beta(p-1)} |\bar a|^{sp-\gamma(p-1)} \Leftarrow
\frac{1}{\kappa} \geq R^{-sp + \beta(p-1)} R^{\beta(\frac{sp-\gamma(p-1)}{\gamma})}.
\end{align*}
Now we further set $\beta$ to be small enough so that exponent of $R$ on the RHS becomes negative, and thus, there exists $R_0>0$ such that 
$$ I_4 \geq -\kappa\eta^{p-1} |\abar|^{\gamma(p-1)-sp}$$
for all $R\geq R_0$, where the constant $\kappa$ can be chosen small and $R_0$ would depend on $\kappa$.
Therefore, from \eqref{I123} and the estimate of $I_4$, we can find $R_0$ such that 
\begin{equation}\label{I1234}
I_1 + I_2 +I_3 + I_4 \geq C \eta^{p-1} |\bar a|^{\gamma(p-1) -sp}
\end{equation}
for all $R\geq R_0$.

\medskip

\noindent\underline{Estimation of $\mathcal{H}$.} 
Finally, we estimate $\mathcal{H}$.  Since 
$$\frac{1}{\eta(R) R}|\abar|^{1-\gamma}\leq \frac{C}{R^{\gamma-\beta}}\to 0 \quad \text{as}\quad R\to \infty,$$
we have $|\grad_x\phi(\xbar,\ybar)|\leq \eta(R) |\abar|^{\gamma-1}$ for all large $R$. Therefore,
\begin{align}
|\mathcal{H}| &=\left|H(\grad_y \phi (\bar x, \bar y)) - H(\grad_x \phi (\bar x, \bar y))\right|\nonumber
\\
&\leq C \left[ (|\grad_y \phi (\bar x, \bar y)|^{m-1} + |\grad_x \phi (\bar x, \bar y)|^{m-1}+1)   |\grad\psi(x/R)|  \right] \nonumber
\\
&\leq \frac{C}{R}\left| \eta |\bar a|^{\gamma-1} \right|^{m-1} + \frac{C}{R}  \label{estimateofH}
\end{align}
for all $R\geq R_0$, where $R_0$ is a constant.

Now using  \eqref{I1234} and \eqref{estimateofH} in \eqref{E4.5} we obtain
$$\eta^{p-1} |\bar a|^{\gamma(p-1) -sp} \leq \frac{C}{R} \eta^{m-1}|\bar a|^{(m-1)(\gamma-1)} + \frac{C}{R},$$
leading to
\begin{align}\label{final}
1 &\leq \frac{C}{R} \eta^{m-p}|\bar a|^{\gamma(m-p) -(m-sp) +1} + \frac{C}{R} \eta^{1-p} |\abar|^{sp-\gamma(p-1)}\nonumber
\\
&=\frac{C}{R} R^{-\beta(m-p)}|\bar a|^{\gamma(m-p) -(m-sp) +1} + \frac{C}{R} R^{\beta(1-p)} |\abar|^{sp-\gamma(p-1)}
\end{align}
for all $R\geq R_0$, and $R_0$ is chosen depending on \eqref{I1234} and \eqref{estimateofH}.

Now to conclude the proof we consider two situations.

First, we suppose $\gamma(m-p) -(m-sp) +1 \geq 0$.
Using $|\bar a| \leq C R^{\frac{\beta}{\gamma}}$, which is implied by $\eta(R) |\bar a|^{\gamma} \leq 2\norm{u}_\infty$, we get
\begin{align*}
\frac{C}{R} R^{-\beta(m-p)}|\bar a|^{\gamma(m-p) -(m-sp) +1} &\leq \frac{C_1}{R} R^{-\beta(m-p) +\frac{\beta}{\gamma}(\gamma(m-p) -(m-sp) +1)},
\\
\frac{C}{R} R^{\beta(1-p)} |\abar|^{sp-\gamma(p-1)} & \leq \frac{C_1}{R} R^{\beta(1-p)+\frac{\beta}{\gamma}(sp-\gamma(p-1))}. 
\end{align*}
In this case, we can choose $\beta$ small, if required, so that $-\beta(m-p) +\frac{\beta}{\gamma}(\gamma(m-p) -(m-sp) +1) -1 <0$
and $\beta(1-p)+\frac{\beta}{\gamma}(sp-\gamma(p-1))-1<0$.  Then
\eqref{final} can not hold  as $R$ enlarges, which is a contradiction. Hence \eqref{E4.2} holds in this case.

Next, we suppose $\gamma(m-p) -(m-sp) +1 < 0$. Since $\gamma < \min \{ 1, \upkappa_0, \frac{sp}{p-1} \}$, we can find
$\upkappa>\gamma$ such that $u$ is globally $\upkappa$-H\"older continuous. Then from $\Phi(\bar x, \bar y) >0$ we get 
\begin{align*}
\eta |\bar a|^{\gamma} \leq u(\bar x) - u(\bar y) \leq C |\bar a|^\upkappa
\Rightarrow |\bar a|^{\gamma-\upkappa} \leq C R^{\beta}.
\end{align*}
Therefore,
$$\frac{C}{R} R^{-\beta(m-p)}|\bar a|^{\gamma(m-p) -(m-sp) +1} \leq \frac{C_1}{R} R^{-\beta(m-p) +\frac{\beta}{\gamma-\upkappa}(\gamma(m-p) -(m-sp) +1)}.$$
In this case, we choose $\beta$ small, depending on $\gamma, \upkappa, m, p, s$, so that 
\begin{align*}
-\beta(m-p) +\frac{\beta}{\gamma-\upkappa}(\gamma(m-p) -(m-sp) +1) -1 &<0,
\\
\beta(1-p)+\frac{\beta}{\gamma}(sp-\gamma(p-1))-1&<0.
\end{align*}
 As before,  we again get a contradiction to \eqref{final} as $R \rightarrow \infty$.
Therefore \eqref{E4.2} holds, implying $u$ is a constant. This completes the proof.
\qed

\bigskip
\subsection*{Acknowledgement}
Part of this project was done during a visit of A.B. at the Instituto de
Matem\'atica of Universidade Federal do Rio de Janeiro. The kind hospitality of the department
is acknowledged.
A.B. was partially supported by 
an ANRF-ARG grant ANRF/ARG/2025/000019/MS. 
E.T. was partially supported by CNPq Grant 306022/2023-0, FAPERJ APQ1 Grant 210.573/2024 and FAPERJ APQ2 204.215/2025. 
Both A.B. and E.T. were also supported by a CNPq Grant 408169. 

\subsection*{Data Availability}  All data generated or analyzed during this study are included in this published article.
\medskip

\noindent{\bf Declarations}
\subsection*{Conflicts of Interest}  The authors declare that they have no conflict of interest to this work.

\begin{thebibliography}{77}
\bibitem{ABC19} A. Arapostathis, A. Biswas, and L. Caffarelli. On uniqueness of solutions to viscous HJB equations with
a subquadratic nonlinearity in the gradient. Comm. Partial Differential Equations, 44(12):1466--1480, 2019.

\bibitem{Bar91} G. Barles. A weak Bernstein method for fully nonlinear elliptic equations,
\emph{Diff. and Integral Equations} 4(2),
241--262, 1991

\bibitem{BCI11} G. Barles, E. Chasseigne, and C. Imbert.
H\"{o}lder continuity of solutions of second-order non-linear elliptic integro-differential equations,
\emph{J. Eur. Math. Soc.} (JEMS) 13,  1--26, 2011

\bibitem{BCCI12} G. Barles, E. Chasseigne, A. Ciomaga, and C. Imbert.
 Lipschitz regularity of solutions for mixed integro-differential equations,
 \emph{ J. Differential Equations}, 252(11), 6012--6060, 2012

\bibitem{BKLT} Guy Barles, Shigeaki Koike, Olivier Ley, and Erwin Topp. Regularity results and large time behavior for integro-
differential equations with coercive Hamiltonians. Calc. Var. Partial Differential Equations, 54(1):539--572, 2015.

\bibitem{BLT17} Guy Barles, Olivier Ley, and Erwin Topp. Lipschitz regularity for integro-differential equations with coercive
Hamiltonians and application to large time behavior. Nonlinearity, 30(2):703--734, 2017.

\bibitem{BM16} Guy Barles and Joao Meireles. On unbounded solutions of ergodic problems in $\mathbb{R}^m$ for viscous Hamilton-Jacobi
equations. Comm. Partial Differential Equations, 41(12):1985--2003, 2016

\bibitem{BT16} Guy Barles and Erwin Topp. Lipschitz regularity for censored subdiffusive integro-differential equations with
superfractional gradient terms. Nonlinear Anal., 131:3--31, 2016.

\bibitem{BF92} A. Bensoussan and J. Frehse. On Bellman equations of ergodic control in $\mathbb{R}^n$. J. Reine Angew. Math., 429:125--160, 1992.

\bibitem{Bern06} Serge Bernstein. Sur la g\'en\'eralisation du probl\`eme de Dirichlet. Math. Ann., 62(2):253--271, 1906.

\bibitem{Bern10} Serge Bernstein. Sur la g\'een\'eralisation du probl\`eme de Dirichlet. Math. Ann., 69(1):82--136, 1910.


\bibitem{BBS26} M. Bhakta, A. Biswas and A. Sen.
Liouville results for supersolutions of fractional $p$-Laplacian equations with gradient nonlinearities,
to appear in {\it Proc. AMS}, 2026

\bibitem{BHV14} M.F. Bidaut-V\'eron, M. Garc\'{i}a-Huidobro, L. V\'eron. Local and global properties of solutions of quasilinear
Hamilton-Jacobi equations, J. Func. Anal. 267: 3294-3331, 2014

\bibitem{BHV19} M.F. Bidaut-V\'eron, M. Garc\'{i}a-Huidobro, L. V\'eron. Estimates of solutions of elliptic equations with a
source reaction term involving the product of the function and its gradient, Duke Math. J. 168: 1487--1537, 2019


\bibitem{BQT25} A. Biswas, A. Quaas, and E. Topp,
Nonlocal Liouville theorems with gradient nonlinearity,
\emph{J. Func. Anal.}, Volume 289, Issue 8, 15 October 2025.


\bibitem{BS25} A. Biswas and A. Sen.
 Improved H\"{o}lder regularity of fractional $(p, q)$-Poisson equation with regular data,
2025.
 
 



 \bibitem{BT24} A. Biswas and E. Topp.
Nonlocal ergodic control problem in $\mathbb{R}^d$, 
\emph{Math. Annalen} 390, 45--94, 2024

\bibitem{BT25} A. Biswas and E. Topp.
Lipschitz regularity of fractional $p$-Laplacian, 
\emph{Annals of PDE} 11, no. 27,  2025

 
 \bibitem{BDLM25} V. B\"ogelein, F. Duzaar, N. Liao, and K. Moring. 
Gradient estimates for the fractional $p$-Poisson equation,
\emph{J.  Math. Pures et Appl.} 204, appeared online, 2025
arXiv:2503.05903, 2025

\bibitem{BDLMS24a} V. B\"ogelein, F. Duzaar, N. Liao, G. Molica Bisci, and R. Servadei. 
Regularity for the fractional $p$-Laplace equation, 
\emph{J.  Func. Anal.} 289(9), 2025

\bibitem{BDLMS24b} V. B\"ogelein, F. Duzaar, N. Liao, G. Molica Bisci, and R. Servadei. 
Gradient regularity of $(s, p)$-harmonic functions, \emph{Calc. Var. Partial Differential Equations}, to appear, arXiv:2409.02012, 2024





 


\bibitem{BL17} L. Brasco and E. Lindgren.
Higher Sobolev regularity for the fractional $p$-Laplace equation in the superquadratic case, 
\emph{Adv. Math.} 304, 300--354, 2017

\bibitem{BLS18} L. Brasco, E. Lindgren, and A. Schikorra.
Higher H\"older regularity for the fractional p-Laplacian in the superquadratic case,
\emph{ Adv. Math.} 338, 782--846, 2018


 












 
 \bibitem{CDLP} I. Capuzzo Dolcetta, F. Leoni ,and A. Porretta. 
 H\"{o}lder estimates for degenerate elliptic equations with coercive
Hamiltonians, \emph{Trans. Am. Math. Soc.}, 362(9):4511--4536, 2010

 


\bibitem{CGT22} A. Ciomaga, D. Ghilli, and E. Topp.
 Periodic homogenization for weakly elliptic Hamilton-Jacobi- Bellman
equations with critical fractional diffusion,
\emph{Comm. Partial Differential Equations}, 47(1):1--38, 2022

\bibitem{CG23} M. Cirant and A. Goffi. On the Liouville property for fully nonlinear equations with superlinear first-order
terms, ``Proceedings of the Conference on Geometric and Functional Inequalities and Recent Topics in Nonlinear
PDEs", Contemporary Mathematics, American Mathematical Society, 781 (2023)

\bibitem{CV22} M. Cirant and G. Verzini.
Local H\"older and maximal regularity of solutions of elliptic equations with superquadratic gradient terms,
\emph{Advances in Mathematics}, Vol. 409, Part B, 2022, 108700

%

\bibitem{Coz17} M. Cozzi.
Regularity results and Harnack inequalities for minimizers and solutions of nonlocal problems: a unified approach via fractional De Giorgi classes,
\emph{ J. Func. Anal.} 272 , 4762--4837, 2017

 
\bibitem{DPQ25} L.~M. Del~Pezzo and A. Quaas, The fundamental solution of the fractional $p-$Laplacian, \emph{NoDEA Nonlinear Differential Equations Appl.}  33, no.~3, Paper No. 62, 2026



\bibitem{DKP14} A. Di Castro, T. Kuusi, and G. Palatucci.
Nonlocal Harnack inequalities, 
\emph{J. Funct. Anal.} 267 (6), 1807--1836, 2014

\bibitem{DKP16} A. Di Castro, T. Kuusi, and G. Palatucci.
Local behavior of fractional $p$-minimizers, 
\emph{Ann. Inst. H. Poincar\'e Anal. Non Lin\'eaire} 33 , 1279--1299, 2016

\bibitem{DKLN} L. Diening, K. Kim, H.-S. Lee and S. Nowak.
 Higher differentiability for the fractional $p$-Laplacian, 
 \emph{Math. Annalen} 391, 5631--5693, 2025 
 

 
\bibitem{DN25} L. Diening and S. Nowak. 
Calder\'on-Zygmund estimates for the fractional $p$-Laplacian,
\emph{Annals of PDE} 11, no. 6,  2025


\bibitem{F09} R. Filippucci. Nonexistence of positive weak solutions of elliptic inequalities, Nonlinear Anal., 70
2903--2916, 2009


\bibitem{GL24} P. Garain and E. Lindgren.
Higher H\"older regularity for the fractional $p$-Laplace equation in the subquadratic
case. \emph{Math. Annalen} 390, 5753--5792, 2024


\bibitem{GJS25} D. Giovagnoli, D. Jesus and L. Silvestre.
$C^{1+\alpha}$ regularity for fractional $p$-harmonic functions.
ArXiv:2509.26565, 2025






\bibitem{IMS20} A. Iannizzotto, S. Mosconi, and M. Squassina.
Fine boundary regularity for the degenerate fractional p-Laplacian,
\emph{ J. Funct. Anal.} 279, no. 8, 108659, 54 pp., 2020

\bibitem{IL90} H. Ishii and P. L. Lions.
Viscosity solutions of fully non-linear second-order elliptic partial differential equations,
\emph{J. Differential Equations} 83, No.1, 26--78, 1990

\bibitem{JSU} D. Jesus, A. Sobral, and J. M. Urbano.
Fractional $p$-caloric functions are Lipschitz, ArXiv.2603.12065, 2026







\bibitem{KKL19} J. Korvenp\"a\"a, T. Kuusi and E Lindgren. Equivalence of solutions to fractional $p$-Laplace type equations, 
\emph{J.  Math. Pures  Appl.} 132, 1--26, 2019

\bibitem{KKP16} J. Korvenp\"a\"a, T. Kuusi,  and G. Palatucci.
 H\"{o}lder continuity up to the boundary for a class of fractional obstacle problems,
 \emph{ Atti Accad. Naz. Lincei Rend. Lincei Mat. Appl.} 27, no. 3, 355--367, 2016


%
%

\bibitem{Lions} P. L. Lions, Quelques remarques sur les probl\`emes elliptiques quasilin\'eaires du
second ordre, \emph{J. Anal. Math.} 45: 234--254,1985.

\bibitem{MP99} \`{E}. Mitidieri and S. I. Pokhozhaev. Absence of positive solutions for quasilinear elliptic problems in $\mathbb{R}^N$, Tr. Mat.
Inst. Steklova, 227:192--222, 1999

\bibitem{MP01} \`{E}. Mitidieri and S. I. Pokhozhaev. A priori estimates and the absence of solutions of nonlinear partial differential
equations and inequalities, Tr. Mat. Inst. Steklova, 234:1--384, 2001


%


%
%
%

\end{thebibliography}

\end{document}